\DeclareRobustCommand\widecheck[1]{{\mathpalette\@widecheck{#1}}}
\def\@widecheck#1#2{%
    \setbox\z@\hbox{\m@th$#1#2$}%
    \setbox\tw@\hbox{\m@th$#1%
       \widehat{%
          \vrule\@width\z@\@height\ht\z@
          \vrule\@height\z@\@width\wd\z@}$}%
    \dp\tw@-\ht\z@
    \@tempdima\ht\z@ \advance\@tempdima2\ht\tw@ \divide\@tempdima\thr@@
    \setbox\tw@\hbox{%
       \raise\@tempdima\hbox{\scalebox{1}[-1]{\lower\@tempdima\box
\tw@}}}%
    {\ooalign{\box\tw@ \cr \box\z@}}}
\numberwithin{equation}{section}
\newtheorem{theorem}{Theorem}[section]
\newtheorem{lemma}[theorem]{Lemma}
\newtheorem{proposition}[theorem]{Proposition}
\theoremstyle{definition}
\theoremstyle{remark}
\newtheorem*{remark}{Remark}
\theoremstyle{remark}
\renewcommand{\Re}{\operatorname{Re}}
\renewcommand{\Im}{\operatorname{Im}}
\newcommand{\id}{\operatorname{id}}
\DeclareMathOperator{\GL}{GL}
\DeclareMathOperator{\SL}{SL}
\DeclareMathOperator{\PSL}{PSL}
\DeclareMathOperator{\SO}{SO}
\DeclareMathOperator{\Sp}{Sp}
\DeclareMathOperator{\tr}{tr}
\DeclareMathOperator{\Vol}{Vol}
\DeclareMathOperator{\Aut}{Aut}
\DeclareMathOperator{\Res}{Res}
\DeclareMathOperator{\sgn}{sgn}
\newcommand{\sumstar}{\sideset{}{^{*}}\sum}
\newcommand{\av}{\rm av}
\newcommand{\ev}{\rm ev}
\begin{document}
\begin{abstract}
We establish an asymptotic formula with a power-saving error of the $L^2$-norm of Siegel cusp forms of degree 2 in an average sense when restricted to the imaginary axis. The result is consistent with the Mass Equidistribution Conjecture for Siegel modular forms and the Lindelöf Hypothesis for some twisted Koecher-Maass series. Along the way, we perform a careful analysis of the Kitaoka formula of degree 2.
\end{abstract}

\subjclass{11F12,11F46,11F55,11F72}
\keywords{Siegel modular form, Koecher-Maass series, spectral summation formula, pre-trace formula}

\title{A Restriction Norm Problem for Siegel Modular Forms}
\author{Gilles Felber}

\address{Max-Planck Institut for Mathematics, Vivatsgasse 7, 53111 Bonn, Germany}
\email{felber@mpim-bonn.mpg.de}

\thanks{Author supported by Germany's excellence strategy grant EXC-2047/1-390685813}

\maketitle

\section{Introduction}
Given a Riemannian manifold, a general question in analysis is to understand the size of the eigenfunctions of the associated Laplace-Beltrami operator. This can be investigated in different ways. For example, one can consider the distribution of mass of the function \cite{SoundQUE} or the relations between $L^p$-norms. It is also interesting to investigate the restriction of a function to some submanifold \cite{BGTRestNorm}. Generally, these questions are difficult and only stated as conjectures. A particularly interesting case is the one of an arithmetic manifold that is a symmetric space equipped with a family of Hecke operators. These commute with the Laplace-Beltrami operator and thus provide additional symmetries. It is then sometimes possible to have a better understanding of the situation than in the general case. It also links these questions to arithmetic problems.

In this paper, we investigate a similar setting for holomorphic Siegel modular forms of degree 2. These are the natural generalization of holomorphic modular forms for $\SL_2(\mathbb Z)$ to the symplectic group $\Sp_4(\mathbb Z)$. For a general introduction, see \cite{Klingen1990} or     \cite{Freitag1983}. Let $\mathbb H^{(2)}$ denote the space of $2\times 2$ symmetric complex matrices $Z=X+iY$ with the imaginary part positive definite, which we denote by $Y>0$. On this space, the symplectic group $\Sp_4(\mathbb Z)$ acts in a analogous way to $\SL_2(\mathbb Z)$. We consider the \emph{restriction $L^2$-norm} of a Siegel modular form to the imaginary axis $i\mathcal P(\mathbb R)$, where $\mathcal P(\mathbb R):=\{Y>0\}$. Namely, for a Hecke cusp form $f\in S_k^{(2)}$ of even weight $k$ we define
\begin{align}\label{defRestrictionNorm}
N(f):=\frac{\pi^2}{90\Vert f\Vert_2^2}\int_{\SL_2(\mathbb Z)\backslash\mathcal P(\mathbb R)}|f(iY)|^2\det(Y)^k\frac{dY}{\det(Y)^{3/2}}.
\end{align}

Here $\frac{dY}{\det(Y)^{3/2}}$ is the invariant measure over $\mathcal P(\mathbb R)$. In Equation (\ref{equIsoImaginaryAxisDetails}), we describe the measure-preserving isomorphism
\begin{align}\label{equIsoImaginaryAxis}
\mathcal P(\mathbb R)\cong\mathbb H\times\mathbb R_{>0}.
\end{align}

The group $\SL_2(\mathbb Z)$ acts on $\mathcal P(\mathbb R)$ by $U\cdot Y=U^tYU$ and this action is mapped by the isomorphism to the usual action on $\mathbb H$. We use the standard measures on the quotients $\SL_2(\mathbb Z)\backslash\mathbb H$ and $\Sp_4(\mathbb Z)\backslash\mathbb H^{(2)}$. In particular, these are not probability measures. The factor
$$\frac{\Vol(\Sp_4(\mathbb Z)\backslash\mathbb H^{(2)})}{\Vol(\SL_2(\mathbb Z)\backslash\mathbb H)}=\frac{\pi^3/270}{\pi/3}=\frac{\pi^2}{90}$$
takes this into account in the definition. We state the period formula for this norm. Let $\Lambda$ be a set of all the spectral components in the decomposition of $L^2(\SL_2(\mathbb Z)\backslash\mathbb H)$. It consists of the constant function, Eisenstein series and an orthonormal basis of Hecke-Maass cusp forms and is equipped with a measure $d\phi$ (see Section \ref{secNotations} for more details). We denote by $\Lambda_{\ev}$ the subset of $\Lambda$ consisting of even functions.

\begin{proposition}[\cite{Blomer2019symplectic}, Proposition 1]\label{proPeriodFormula}
Let $L(f\times\phi,s)$ be the Dirichlet series defined in Equation (\ref{defTwistedKMSeries}) and $G(f\times\phi,s)$ the corresponding gamma factor. Then
$$N(f)=\frac{\pi^2}{2880\Vert f\Vert_2^2}\int_{-\infty}^\infty\int_{\Lambda_{\ev}}|L(f\times\phi,1/2+it)G(f\times\phi,1/2+it)|^2d\phi\,dt.$$
\end{proposition}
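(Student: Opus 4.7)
The plan is to apply the spectral--Mellin Plancherel theorem on $(\SL_2(\mathbb Z)\backslash\mathbb H)\times\mathbb R_{>0}$ to express $N(f)$ in terms of periods of $f$ against the $\Lambda$-spectrum, and then identify these periods with the twisted Koecher--Maass $L$-function. The starting observation is that although $f(iY)$ is not $\SL_2(\mathbb Z)$-invariant on its own, the combination
\[
g(z,t):=f(iY)\,\det(Y)^{k/2}
\]
is, where $(z,t)\in\mathbb H\times\mathbb R_{>0}$ are the coordinates supplied by (\ref{equIsoImaginaryAxis}). Indeed, for $U\in\SL_2(\mathbb Z)$ the block-diagonal matrix $\mathrm{diag}(U^{-t},U)$ lies in $\Sp_4(\mathbb Z)$ and acts on $Z$ by $Z\mapsto U^{-t}ZU^{-1}$ with trivial automorphy factor, so $f(iU^tYU)=f(iY)$; and $\det(U^tYU)=\det(Y)$. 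Since $dY/\det(Y)^{3/2}$ factors under (\ref{equIsoImaginaryAxis}) as a constant multiple of $d\mu_{\mathbb H}(z)\cdot dt/t$, the integral $N(f)$ is a positive multiple of $\|g\|_{L^2}^2$ on this product.

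Plancherel in the Mellin variable $t$ (shifted to the critical line $\Re(s)=1/2$ to match the functional equation), combined with spectral Plancherel in $z$, produces
\[
\|g\|_{L^2}^2 \;\propto\; \int_{-\infty}^\infty\int_{\Lambda}\bigl|\widetilde g(\phi,\tfrac12+it)\bigr|^2\,d\phi\,dt,
\]
where $\widetilde g(\phi,s)$ is the combined Mellin--spectral transform of $g$. The central computation is to verify $\widetilde g(\phi,s)=L(f\times\phi,s)G(f\times\phi,s)$. I would substitute the Siegel Fourier expansion $f(iY)=\sum_{T>0}a(T)e^{-2\pi\tr(TY)}$, unfold the $z$-integral against the Whittaker--Fourier expansion of $\phi$, and collapse the sum over positive-definite half-integral $T$ by its $\SL_2(\mathbb Z)$-orbit structure. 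The arithmetic part is then, by design of (\ref{defTwistedKMSeries}), the Dirichlet series $L(f\times\phi,s)$, while the remaining archimedean integral of $e^{-2\pi\tr(TY)}$ against the appropriate power of $\det(Y)$ and the Whittaker function assembles into the prescribed gamma factor $G(f\times\phi,s)$.

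The restriction of the $\phi$-integration to $\Lambda_{\ev}$ is a parity argument: the matrix $J=\mathrm{diag}(1,-1)$ gives $\mathrm{diag}(J,J)\in\Sp_4(\mathbb Z)$ with automorphy factor $\det(J)^k=(-1)^k=1$, so $f(iJYJ)=f(iY)$; under (\ref{equIsoImaginaryAxis}) this becomes invariance of $g$ under $z\mapsto-\bar z$, and odd Maass forms drop out. The overall constant $\pi^2/2880$ is assembled from the $\pi^2/90$ of (\ref{defRestrictionNorm}), the Jacobian of (\ref{equIsoImaginaryAxisDetails}), the $1/(2\pi)$ from Mellin Plancherel, and the factor lost by halving to the even spectrum.

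The main obstacle is the unfolding bookkeeping: one must carefully match the sum over $\SL_2(\mathbb Z)$-equivalence classes of half-integral positive definite $T$, the Hecke--Maass Fourier coefficients and spectral parameter of $\phi$, and the archimedean Whittaker integral against the precise definitions of $L(f\times\phi,s)$ and $G(f\times\phi,s)$ from (\ref{defTwistedKMSeries}). Convergence of the intermediate manipulations causes no trouble because $f$ is a cusp form and $g$ decays rapidly on the boundary of $\mathcal P(\mathbb R)$, so everything is absolutely convergent.
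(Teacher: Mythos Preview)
Your plan is essentially the paper's: pass to $\mathbb H\times\mathbb R_{>0}$ via (\ref{equIsoImaginaryAxisDetails}), apply Parseval in the spectral and Mellin variables, identify the period with $\Lambda(f\times\phi,s)$, and restrict to $\Lambda_{\ev}$ by the $z\mapsto-\bar z$ symmetry. The paper carries this out in Section~\ref{section2} and imports the explicit period computation from \cite{Blomer2019symplectic}, Section~6.

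One correction to your unfolding sketch: the Whittaker--Fourier expansion of $\phi$ is not the tool here and would not land on the stated formula, since $L(f\times\phi,s)$ in (\ref{defTwistedKMSeries}) is built from the \emph{values} $\phi(z_T)$ at Heegner points rather than from Fourier coefficients of $\phi$. After your (correct) orbit-collapse of the $T$-sum and unfolding, and after the Mellin integral in $r$, the remaining $z$-integral for a fixed representative $T$ has kernel a negative power of $\tr(T\tilde Y(z))$, where $\tilde Y(z)$ is the determinant-one matrix attached to $z$. The relevant identity is
\[
\tr\bigl(T\tilde Y(z)\bigr)=2\sqrt{\det T}\,\bigl(2u(z,z_T)+1\bigr),
\]
valid up to replacing $z_T$ by an $\SL_2(\mathbb Z)$-equivalent point, which exhibits the kernel as a point-pair invariant. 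Since $\phi$ is a Laplace eigenfunction, the integral is then $\bar\phi(z_T)$ times the Selberg/spherical transform of that kernel at $t_\phi$, and this transform supplies the $t_\phi$-dependent gamma factors in $G(f\times\phi,s)$. So keep the orbit-collapse step and replace the Whittaker step by this eigenfunction argument.
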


We state now the main theorem of this article. Let $w:\mathbb R\to\mathbb R_{>0}$ be a smooth test function with support in $[1,2]$, $\omega=\int_1^2w(x)x^3dx$ and $B_k^{(2)}$ be a Hecke eigenbasis of the Siegel cusps forms of weight $k$ and degree 2. We define the following average over $B_k^{(2)}$ and $k$:
\begin{align}\label{defNav}
N_{\av}(K):=\frac{17280}{\omega K^4}\sum_{k\in2\mathbb N}w\left(\frac kK\right)\sum_{f\in B_k^{(2)}}N(f).
\end{align}

The constant in front is motivated by $|B_k^{(2)}|\sim\frac{k^3}{8640}$ and takes into account an extra factor $\frac12$ because we restrict ourselves to even $k$. We follow \cite{Kitaoka} in this restriction on $k$.

\begin{theorem}\label{thmNav}
Let $\epsilon>0$. We have
$$N_{\av}(K)=4\log(K)+C+O_\epsilon(K^{-1/2+\epsilon}),$$
for an explicit constant $C$ that only depends on $w$.
\end{theorem}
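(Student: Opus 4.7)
The plan is to start from the period formula in Proposition~\ref{proPeriodFormula} and substitute it into definition~(\ref{defNav}), so that $N_{\av}(K)$ becomes a weighted spectral integral of $|L(f\times\phi,\tfrac12+it)G(f\times\phi,\tfrac12+it)|^2$, averaged over $f\in B_k^{(2)}$ and $k$. I would first open each factor $|L(f\times\phi,\tfrac12+it)|^2$ via an approximate functional equation, expressing it as a double Dirichlet series in the Fourier coefficients $a_f(T)$ truncated by smooth archimedean weights; the gamma factor $G$ together with $w(k/K)$ should provide the effective cutoff, polynomial in $K$ and in the spectral parameters of $\phi$ and $t$.

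After swapping orders of summation, the innermost object is an average of the shape $\sum_{k}w(k/K)\sum_{f\in B_k^{(2)}}a_f(T_1)\overline{a_f(T_2)}/\|f\|^2$, to which I would apply the degree-$2$ Kitaoka formula — the Siegel analogue of the Petersson trace formula. This decomposes the average into a diagonal contribution, supported on pairs $(T_1,T_2)$ that are $\SL_2(\mathbb Z)$-equivalent, plus an off-diagonal contribution expressed as a sum of symplectic Kloosterman sums weighted by Bessel-type archimedean transforms coming from the various Bruhat cells of $\Sp_4$.

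Plugging the diagonal piece back into the $(\phi,t)$-integration, I expect the main term $4\log(K)+C$ to emerge. The contribution of the constant function in $\Lambda_{\ev}$ should be responsible for the logarithm, reflecting the pole at $s=1$ of the zeta factors that appear after the diagonal collapse and the Mellin–Barnes manipulation of the cutoff weights; the cuspidal and continuous parts of $\Lambda_{\ev}$, after being controlled by standard bounds for the fourth moment of $\mathrm{GL}_2$ $L$-functions on the critical line, should feed into the constant $C$ (together with lower-order terms from the Eisenstein spectrum).

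The main obstacle, and the heart of the paper, is the power-saving bound $O_\epsilon(K^{-1/2+\epsilon})$ for the off-diagonal contribution. This is where the \emph{careful analysis of the Kitaoka formula of degree $2$} announced in the abstract comes in: one must understand the Bessel transforms in each Bruhat cell across all ranges of the arithmetic parameters (sizes of $\det T_i$ and of the modulus), use stationary-phase and repeated integration-by-parts arguments to extract archimedean oscillation, and combine these with Weil-type bounds for symplectic Kloosterman sums. The substantially richer geometric side of the Kitaoka formula (compared with the $\SL_2$ Petersson formula) is what makes this the genuinely technical step, but a sufficiently uniform analysis should yield the stated error term.
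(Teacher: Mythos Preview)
Your overall skeleton --- period formula, approximate functional equation, Kitaoka formula, split into diagonal and off-diagonal --- matches the paper. But your account of both the main term and the error term diverges in important ways from how the argument actually runs.

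\medskip

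\textbf{Diagonal term.} After the Kitaoka diagonal collapses $Q\sim T$, the spectral integral becomes $\int_{\Lambda_{\ev}}V(\det T,\det T,t_\phi,k)\,|\phi(z_T)|^2\,d\phi$, with no $L$-functions left. The paper does \emph{not} decompose this by spectral type; it applies the \emph{pre-trace formula}, turning the spectral integral into a sum over $\gamma\in\SL_2(\mathbb Z)$ of $\kappa(u(z_T,\gamma z_T))$. The leading term is the identity $\gamma$, giving $\kappa(0)$, and the nontrivial work is a geometric count of Heegner points $z_T$ close to the boundary of the fundamental domain, showing that the remaining $\gamma$'s contribute $O(k^{2.5+\epsilon})$. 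The logarithm then comes from the subsequent $T$-sum: the Dirichlet series $\sum_T\det(T)^{-3/2-2v}$ has a simple pole at $v=0$ (via the class-number asymptotic $\sum_{D\le X}\tilde h(D)\sim\frac{4\pi}{9}X^{3/2}$), which combines with the $1/v$ from the approximate functional equation to give a double pole and hence the factor $\log(k)$. So the logarithm is \emph{not} produced by the constant function in $\Lambda_{\ev}$, and there is no fourth moment of $\GL_2$ $L$-functions to control --- your description of this step would not lead to the right answer.

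\medskip

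\textbf{Off-diagonal terms.} The paper explicitly uses only the trivial bounds on the generalized Kloosterman sums $H^\pm$ and $K$; Weil-type bounds play no role. The saving comes entirely from the archimedean side, and the rank~1 and rank~2 cells are handled quite differently. For rank~1 the $k$-average localizes $c,s\ll K^\epsilon$ and forces very special $U,V$; one then runs Poisson summation in $\det Q-\det T$ and a stationary-phase analysis to win $K^{-1/2}$. For rank~2 one rewrites the matrix Bessel function via a product-of-Bessel identity, the $k$-average forces the eigenvalues $s_1,s_2$ of $TC^{-1}QC^{-t}$ to satisfy $(s_1-s_2)^2\ll K^\epsilon$, and a careful count of pairs $(T,Q)$ with nearly equal eigenvalues (together with showing $\|C\|\ll K^\epsilon$) yields the bound. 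Your proposal is correct that this is the technical heart, but the mechanism is shortness of the arithmetic sums plus oscillatory-integral analysis, not cancellation in Kloosterman sums.
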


A similar problem was considered by Blomer and Corbett in \cite{Blomer2019symplectic}. There the average was done over a subspace of $S_k^{(2)}$ consisting of the Saito-Kurokawa lifts. These are lifts coming from half-integral weight modular forms in the Kohnen's plus space $S_{k-1/2}^+(4)$. The latter is in bijection with its Shimura lift to the classical modular forms $S_{2k-2}$. This allows the authors to reduce some computation to half-integral weight forms. The present text follows some ideas of that article and diverges from it at the application of a trace formula. In the latter, a relative trace formula for pairs of Heegner points is used, where here we use the Kitaoka formula, a generalization of the Petersson trace formula to higher degree Siegel modular forms. Nevertheless, some ideas remain valid in both cases and the careful reader can spot similarities along the whole article.

The result, and especially the constant 4, are interesting in two aspects. First, it fits in the more general question of Quantum Unique Ergodicity and its holomorphic counterpart, the Mass Equidistribution Conjecture \cite{HolowinskySound}. The latter is widely unknown for Siegel modular forms of degree bigger than 1. Still, one could hope it holds and even that a similar result is valid on a submanifold in the spirit of \cite{Young2016}. A heuristic argument is developed in \cite{Blomer2019symplectic} and shows the coherence of the constant 4 with the Mass Equidistribution Conjecture. In another direction, the result can be seen as an average version of the Lindelöf Hypothesis for the Dirichlet series of Proposition \ref{proPeriodFormula}. Although these series are not $L$-functions, as they lack an Euler product representation, one could still hope that the Lindelöf Hypothesis holds.

The proof relies on two trace formulas. The most important one is the Kitaoka formula for degree 2 forms, presented in Theorem \ref{thmKitaokaFormula} (see also \cite{Kitaoka}, \cite{Blomer2016spectral} Section 3). The general shape of the formula is similar to the Petersson formula, but the non-diagonal terms are given by a sum over integral matrices $C$. To simplify the analysis, the non-diagonal terms are split into matrices of ranks 1 and 2. Most difficulties arise for the rank 2 term, which features a generalized Bessel function of matrix argument (see also \cite{Herz1955}). The analysis of this involves, after averaging over $k$, an analysis of an oscillating integral and a careful count of matrices with close eigenvalues. The rank 1 term is simpler, because we understand the involved Bessel function. Nevertheless, we have to develop a non-trivial argument because a simple count of the number of terms with trivial bounds would not give us a power saving.

The main term of Theorem \ref{thmNav} is given by the diagonal term of the Kitaoka formula. For that part of the argument, we use the pre-trace formula to compute the spectral integral in Proposition \ref{proPeriodFormula}. We have to carefully count the terms on the geometric side of it to get a power saving. This corresponds to a bound on the number of Heegner points close to the boundary of a fundamental domain of $\SL_2(\mathbb Z)\backslash\mathbb H$. The average over $k$ is not used in this part and is only computed at the end. The dependence of $C$ on $w$ in Theorem \ref{thmNav} arises there and is explicit. We also point out that the restriction to Hecke eigenfunctions, both for the Siegel and the Maass forms does not play a role in the two formulas. It is used to get a self-dual approximate functional equation. We believe that the main results of this article hold without this condition.

The structure of the paper is the following. We begin by the proof of Proposition \ref{proPeriodFormula}. After that, we can average $N(f)$ and apply the Kitaoka formula. This is the content of Section \ref{section2}. After gathering some technical lemmas in Section \ref{section3}, we deal with the diagonal term and the pre-trace formula in Section \ref{section4}. Sections \ref{section5} and \ref{section6} consist of the analysis of respectively the rank 1 and rank 2 terms in the Kitaoka formula. In Appendix \ref{AppAutomorphisms}, we provide a table with all the $\GL_2(\mathbb Z)$-automorphisms of reduced positive definite quadratic forms and prove its correctness. This is used in the computation of the pre-trace formula for the even spectrum.

\subsection{Notations and normalizations}\label{secNotations}
We fix a few notations and normalize some objects in this section. What is written here is valid everywhere unless stated otherwise.

The set $\mathcal P(\mathbb R)$ consists of every 2 by 2 symmetric positive definite matrices and $\mathcal P(\mathbb Z)$ is the subset of elements with integral diagonal and half-integral non-diagonal elements. In particular $i\mathcal P(\mathbb R)$ corresponds to the imaginary axis of $\mathbb H^{(2)}$, i.e. the set of matrices with purely imaginary coordinates in $\mathbb H^{(2)}$. A matrix $\left(\begin{smallmatrix}a&b\\b&c\end{smallmatrix}\right)$ in $\mathcal P(\mathbb Z)$ (sometimes written $\left(\begin{smallmatrix}x&y\\y&z\end{smallmatrix}\right)$ or $(\begin{smallmatrix}\alpha&\beta\\\beta&\delta\end{smallmatrix})$) corresponds to a positive definite binary quadratic forms $ax^2+2bxy+dy^2$. Such a form is \emph{weakly reduced} if $2|b|\leq a\leq d$. It is \emph{reduced} if, moreover,
$$2|b|=a\text{ or }a=d\Longrightarrow b\geq0.$$
We always use the notation coming from matrices and do not introduce the alternative notations used for quadratic forms. In particular, we consider the determinant of the matrix but never the discriminant of the associated quadratic form.

A matrix $Y\in\mathcal P(\mathbb Z)$ also corresponds to a point $z_Y$ in $\mathbb H$, see Equation \ref{equIsoImaginaryAxisDetails}. The usual fundamental domain of $\SL_2(\mathbb Z)\backslash\mathbb H$ is $\{z=x+iy\in\mathbb C\mid-1/2\leq x\leq1/2,\ |z|\geq1\}$. A weakly reduced matrix $Y$ corresponds to a point in this domain. If $Y$ is reduced and the corresponding point is on the edge of the fundamental domain, then this point has non-positive real part.

Let $\ell=k-3/2$ and $f\in S_k^{(2)}$ be a Siegel cusp form of weight $k$ and degree 2. We only consider even weights. The Fourier series of $f$ is normalized in the following way:
$$f(Z)=\sum_{T\in\mathcal P(\mathbb Z)}a(T)\det(T)^{\ell/2}e(\tr(TZ)).$$

The set of spectral components is denoted by $\Lambda$ and $\Lambda_{\ev}$ is the subset of even forms. The eigenvalue of $\phi\in\Lambda$ is $\lambda_\phi$ and $t_\phi$ is the spectral parameter, given by $\lambda_\phi=\frac14+t_\phi ^2$. The constant function has spectral parameter $i/2$. We write $\int_\Lambda$ for the integral over the spectrum with the corresponding measure, that is $\frac{dt_\phi}{4\pi}$ for the continuous part and the counting measure for the discrete part. The symbol $\Gamma$ is only used for the gamma function and the modular group is designated by $\SL_2(\mathbb Z)$. We use the common notation $e(z)=e^{2\pi iz}$ and the Vinogradov symbols $\ll$, $\gg$, $\asymp$ and $\sim$.

Finally, we always assume that $k$ and $K$ are large enough and $\epsilon>0$ is small enough to avoid degenerate cases. We may change the value of $\epsilon$ from a display to the next, as long as the new $\epsilon$ is a constant multiple of the first one.

\subsection{Acknowledgments}
I would like to thank Valentin Blomer for his supervision of this project, his help to understand their article with A. Corbett and his constant support. I am also thankful to Edgar Assing, Farrell Brumley, Bart Michels, Raphael Steiner and Radu Toma for the enlightening discussions and advice.

\section{Restriction norm}\label{section2}
In this section, we prove Proposition \ref{proPeriodFormula} and introduce the Kitaoka formula. Most of the computations done here are well known and can be found in \cite{Blomer2019symplectic}, \cite{Blomer2016spectral} and other sources. We gather these results for convenience of the reader and provide detailed proofs of some of them. First, we describe in more detail the isomorphism (\ref{equIsoImaginaryAxis}). It is given by sending a matrix corresponding to a point in $\mathbb H$ to the matrix times its transpose. The other direction is given by sending a reduced positive quadratic form to the corresponding Heegner point. The last line is an explicit computation of the map. 
\begin{align}\label{equIsoImaginaryAxisDetails}
\mathbb H\times\mathbb R_{>0}\cong\SO(2)\backslash\SL_2(\mathbb R)\times\mathbb R_{>0}&\xrightarrow\sim\mathcal P(\mathbb R)\nonumber,\\
(\SO(2)\cdot g,r)&\mapsto r\cdot g^tg\nonumber,\\
\left(\frac{-b+i\sqrt{\det(M)}}a,\det(M)\right)&\mapsfrom M=\begin{pmatrix}a&b\\b&c\end{pmatrix}\nonumber,\\
(x+iy,r)&\mapsto\sqrt r\begin{pmatrix}y^{-1}&-xy^{-1}\\-xy^{-1}&y^{-1}(x^2+y^2)\end{pmatrix}.
\end{align}

This isomorphism preserves the measures $\left(\frac{dx\,dy}{y^2},\frac{dr}r\right)\mapsto\frac{dY}{\det(Y)^{3/2}}$ and the action of $\SL_2(\mathbb Z)$. Therefore we can take the quotient on both sides and write
$$N(f)=\frac{\pi^2}{90\Vert f\Vert_2^2}\int_{\SL_2(\mathbb Z)\backslash\mathbb H}\int_0^\infty|f(z,r)|^2\frac{dr}r\frac{dx\,dy}{y^2},$$
where $(z,r)\in\mathbb H\times\mathbb R_{>0}$ corresponds to a point in $\mathcal P(\mathbb R)$. If $Y$ is in $\mathcal P(\mathbb Z)$, the corresponding point $z_Y\in\mathbb H$ is called a \emph{Heegner point}.

\subsection{Spectral decomposition and Dirichlet series}
We now use the spectral decomposition of $L^2(\SL_2(\mathbb Z)\backslash\mathbb H)$. We denote by $\Lambda$ a set of spectral components. For any $g\in L^2(\SL_2(\mathbb Z)\backslash\mathbb H)$, we have the spectral decomposition $g(z)=\int_\Lambda\langle g,\phi\rangle \phi(z)d\phi$. We also have Parseval's identity
$$\int_{\SL_2(\mathbb Z)\backslash\mathbb H}|g(z)|^2\frac{dx\,dy}{y^2}=\int_\Lambda|\langle g,\phi\rangle|^2d\phi.$$

We wrote $f(z,r)$ as a function of $z\in\mathbb H$ and $r>0$. Let $\mathcal M(f)(z,r)$ denote the Mellin transform with respect to $r$. We apply Parseval's identity to the $z$ variable and the Mellin transform to the $r$ variable. This gives
\begin{align*}
\int_{\SL_2(\mathbb Z)\backslash\mathbb H}\int_0^\infty|f(z,r)|^2\frac{dr}r\frac{dx\,dy}{y^2}&=\int_\Lambda\int_0^\infty|\langle f(\cdot,r),\phi\rangle|^2\frac{dr}rd\phi\\
 &=\int_\Lambda\mathcal M\left(|\langle f(\cdot,0),\phi\rangle|^2\right)(0)d\phi\\
 &=\int_\Lambda\frac1{2\pi i}\int_{(0)}|\langle\mathcal M(f)(\cdot,s),\phi\rangle|^2ds\,d\phi.
\end{align*}

We used the formula $\mathcal M(|g|^2)(0)=\frac1{2\pi i}\int_{(0)}|\mathcal M(g)(s)|^2ds$ which is Parseval's theorem for the Mellin transform. Therefore the norm rewrites
$$N(f)=\frac{\pi^2}{90\Vert f\Vert_2^2}\frac1{2\pi i}\int_{(0)}\int_\Lambda|\langle\mathcal M(f)(\cdot,s),\phi\rangle|^2d\phi\,ds=\frac{\pi^2}{90\Vert f\Vert_2^2}\frac1{2\pi}\int_{-\infty}^\infty\int_\Lambda|\langle\mathcal M(f)(\cdot,1/2+it),\phi\rangle|^2d\phi\,dt.$$

The change of real part of the integral in the Mellin transform is valid and gives us nicer symmetry below. This Mellin transform can be computed explicitly. We define the \emph{twisted Koecher-Maass series}
\begin{align}\label{defTwistedKMSeries}
L(f\times\phi,s):=\sum_{T\in\mathcal P(\mathbb Z)/\PSL_2(\mathbb Z)}\frac{a(T)}{\epsilon(T)\det(T)^{1/4+s}}\phi(z_T),
\end{align}
where $a(T)$ is the $T$-th Fourier coefficient of $f$ and $\epsilon(T)=\#\{U\in\PSL_2(\mathbb Z)\mid U^tTU=T\}$ is the number of automorphisms of $T$. The corresponding gamma factor is
$$G(f\times\phi,s)=G(t_\phi,k,s):=4(2\pi)^{-(k-1)-2s}\Gamma\left(\frac\ell2+s+\frac{it_\phi}2\right)\Gamma\left(\frac\ell2+s-\frac{it_\phi}2\right).$$

Recall that $\ell=k-3/2$ and that $t_\phi$ is the spectral parameter of $\phi$. The series $L(f\times\phi,s)$ extends to an entire function that is bounded in vertical strips and we have the functional equation
$$\Lambda(f\times\phi,s):=L(f\times\phi,s)G(f\times\phi,s)=L(f\times\phi,1-s)G(f\times\phi,1-s).$$

However, it does not have an Euler product. The Mellin transform of $f$ is computed in \cite{Blomer2019symplectic}, Section 6. The result is
$$\langle\mathcal M(f)(\cdot,(k-1)/2+s),\phi\rangle=\int_0^\infty\langle f(\cdot,r),\phi\rangle r^{\frac{k-1}2+s}\frac{dr}r=\frac{\sqrt\pi}4L(f\times\bar\phi,s)G(f\times\bar\phi,s).$$

For odd $\phi$, the scalar product inside the $r$-integral vanishes. For even Hecke-Maass cusp forms $\phi$, we have $\bar\phi=\phi$ and for Eisenstein series, $|L(f\times\phi,s)|=|L(f\times\bar\phi,s)|$. Inserting this in the norm gives us
$$N(f)=\frac{\pi^2}{2880\Vert f\Vert_2^2}\int_{-\infty}^\infty\int_{\Lambda_{\ev}}|\Lambda(f\times\phi,1/2+it)|^2d\phi\,dt.$$

This concludes the proof of Proposition \ref{proPeriodFormula}.

\subsection{Approximate functional equation}
Now, we want to evaluate the series $L(f\times\phi,s)$ on the critical line using its Dirichlet series. For this, we compute an approximate functional equation. Note that if $f$ and $\phi$ are Hecke eigenfunctions, then $\overline{L(f\times\phi,s)}=L(f\times\phi,\bar s)$ for cusp forms and the constant function and
$$\overline{L(f\times E(\cdot,1/2+i\tau),s)}=L(f\times E(\cdot,1/2-i\tau),\bar s)=\nu(1/2-i\tau)L(f\times E(\cdot,1/2+i\tau),\bar s)$$
for Eisenstein series, where
$$\nu(s)=\pi^{1/2}\frac{\Gamma(s-1/2)}{\Gamma(s)}\frac{\zeta(2s-1)}{\zeta(2s)}=\frac{\pi^{-(1-s)}\Gamma(1-s)\zeta(2(1-s))}{\pi^{-s}\Gamma(s)\zeta(2s)}.$$

Let $\nu_\phi$ be $\nu(1/2-i\tau)$ with $\nu$ as above if $\phi$ an Eisenstein series and 1 if $\phi$ is a cusp form or the constant function. Then $\nu_\phi\phi(z)=\bar\phi(z)$ and $\overline{L(f\times\phi,s)}=\nu_\phi L(f\times\phi,\bar s)$. Consider
\begin{align*}
I(f\times\phi,s)&=\frac1{2\pi i}\int_{(3)}e^{v^2}\Lambda(f\times\phi,v+s)\Lambda(f\times\bar\phi,v+1-s)\frac{dv}v\\
	&=\frac1{2\pi i}\int_{(3)}e^{v^2}\nu_\phi\Lambda(f\times\phi,v+s)\Lambda(f\times\phi,v+1-s)\frac{dv}v.
\end{align*}

We take $s=1/2+it$. The integrand has no poles except for $v=0$ and decays rapidly at $\infty$. Moving the path of integration to $\Re(v)=-3$, we get
 $$I(f\times\phi,1/2+it)=\frac1{2\pi i}\int_{(-3)}e^{v^2}\nu_\phi\Lambda(f\times\phi,v+1/2+it)\Lambda(f\times\phi,v+1/2-it)\frac{dv}v+|\Lambda(f\times\phi,1/2+it)|^2.$$

Using the functional equation of $\Lambda(f\times\phi,s)$, we get
\begin{align*}
\frac1{2\pi i}\int_{(-3)}e^{v^2}&\nu_\phi\Lambda(f\times\phi,v+1/2+it)\Lambda(f\times\phi,v+1/2-it)\frac{dv}v\\
	&=\frac1{2\pi i}\int_{(-3)}e^{v^2}\nu_\phi\Lambda(f\times\phi,-v+1/2-it)\Lambda(f\times\phi,-v+1/2+it)\frac{dv}v\\
	&=-\frac1{2\pi i}\int_{(3)}e^{v^2}\nu_\phi\Lambda(f\times\phi,v+1/2-it)\Lambda(f\times\phi,v+1/2+it)\frac{dv}v\\
	&=-I(f\times\phi,1/2+it).
\end{align*}

We conclude that $|\Lambda(f\times\phi,1/2+it)|^2=2I(f\times\phi,1/2+it)$. Now, we expand the Dirichlet series of $L(f\times\phi,s)$ at $s=v+\frac12+it$:
\begin{align*}
I(f\times\phi,s)&=\frac1{2\pi i}\int_{(3)}e^{v^2}G(f\times\phi,v+1/2+it)G(f\times\phi,v+1/2-it)\\
 &\quad\cdot\sum_{T,Q\in\mathcal P(\mathbb Z)/\PSL_2(\mathbb Z)}\frac{a(T)a(Q)}{\epsilon(T)\epsilon(Q)\det(TQ)^{1/4+v+1/2}}\left(\frac{\det(Q)}{\det(T)}\right)^{it}\phi(z_T)\bar\phi(z_Q)\frac{dv}v\\
 &=\sum_{T,Q\in\mathcal P(\mathbb Z)/\PSL_2(\mathbb Z)}\frac{a(T)a(Q)}{\epsilon(T)\epsilon(Q)\det(TQ)^{3/4}}\phi(z_T)\bar\phi(z_Q)\\
 &\quad\cdot\left(\frac{\det(Q)}{\det(T)}\right)^{it}\frac1{2\pi i}\int_{(3)}e^{v^2}G(f\times\phi,v+1/2+it)G(f\times\phi,v+1/2-it)\det(TQ)^{-v}\frac{dv}v.
\end{align*}

This gives for the norm
\begin{align*}
N(f)&=\frac{\pi^2}{1440}\sum_{T,Q\in\mathcal P(\mathbb Z)/\PSL_2(\mathbb Z)}\frac1{\epsilon(T)\epsilon(Q)\det(TQ)^{3/4}}\int_{\Lambda_{\ev}}\int_{-\infty}^\infty\left(\frac{\det(Q)}{\det(T)}\right)^{it}\\
	&\quad\cdot\frac1{2\pi i}\int_{(3)}e^{v^2}G(t_\phi,k,v+1/2+it)G(t_\phi,k,v+1/2-it)(x_1x_2)^{-v}\frac{dv}v\,dt\,\phi(z_T)\bar\phi(z_Q)d\phi\\
	&\quad\cdot\frac{a(T)a(Q)}{\Vert f\Vert_2^2}.
\end{align*}
Note that only the last term depends on $f$ (for a fixed $k$).

\subsection{Average and Kitaoka formula}
We consider $N_{\av}$ as defined in Equation (\ref{defNav}). This is amenable to the Kitaoka formula. We begin by stating it. All notations are defined below. We take the description of all the functions from \cite{Blomer2016spectral}, Section 3. More details are given there for the interested reader.

\begin{theorem}[Kitaoka, \cite{Kitaoka}]\label{thmKitaokaFormula}
Let $k\geq6$, $B_k^{(2)}$ be a basis for the space of Siegel modular forms of degree 2 and even weight $k$. Then 
\begin{align*}
c_k\sum_{f\in\mathcal S_k^{(2)}}\frac{a_f(T)\overline{a_f(Q)}}{\Vert f\Vert_2^2}&=\delta_{Q\sim T}\#\Aut(T)\\
 &\quad+\sum_\pm\sum_{c,s\geq1}\sum_{U,V}\frac{(-1)^{k/2}\sqrt2\pi}{c^{3/2}s^{1/2}}H^\pm(UQU^t,V^{-1}TV^{-t};c)J_\ell\left(\frac{4\pi\sqrt{\det(TQ)}}{cs}\right)\\
 &\quad+8\pi^2\sum_{\det(C)\neq0}\frac{K(Q,T;C)}{|\det(C)|^{3/2}}\mathcal J_\ell(TC^{-1}QC^{-t}).
\end{align*}

\end{theorem}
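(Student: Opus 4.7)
The plan is to follow the classical strategy of Petersson/Kitaoka: compute the inner product of a Siegel--Poincaré series with a Hecke basis in two different ways, equate the results, and read off the formula. Concretely, for $T\in\mathcal P(\mathbb Z)$ I would introduce the Poincaré series
\begin{align*}
P_T(Z)=\sum_{\gamma\in\Gamma_\infty\backslash\Sp_4(\mathbb Z)}\det(\Im(\gamma Z))^{k/2}e(\tr(T\gamma Z))\det(T)^{\ell/2}\cdot j(\gamma,Z)^{-k},
\end{align*}
where $\Gamma_\infty$ is the Siegel parabolic (translations by $\mathcal P(\mathbb Z)$) and $j$ is the appropriate automorphy factor. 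By unfolding, $\langle f,P_T\rangle$ is proportional to the Fourier coefficient $a_f(T)$ (up to normalization constants and gamma factors in $k$), which, after expanding an orthonormal Hecke basis, yields the left-hand side of the Kitaoka formula.

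Next I would compute the $Q$-th Fourier coefficient of $P_T$ directly, unfolding the $\Gamma_\infty$-sum and using Siegel's Bruhat decomposition of $\Sp_4(\mathbb Z)$ with respect to the Siegel parabolic. The double cosets $\Gamma_\infty\backslash\Sp_4(\mathbb Z)/\Gamma_\infty$ are parametrized by the bottom-right $2\times2$ block $C$ up to left/right action of $\GL_2(\mathbb Z)$, and split naturally by $\mathrm{rank}(C)\in\{0,1,2\}$:
\begin{itemize}
\item $\mathrm{rank}(C)=0$ forces $\gamma\in\Gamma_\infty$ and produces the diagonal term $\delta_{Q\sim T}\#\Aut(T)$ after matching $Q$ with $T$ modulo $\GL_2(\mathbb Z)$-equivalence.
\item $\mathrm{rank}(C)=1$: one can conjugate $C$ to the form $\mathrm{diag}(c,0)$ times unimodular matrices $U,V$ on either side; the contribution factors through a classical Kloosterman-type sum $H^\pm(UQU^t,V^{-1}TV^{-t};c)$ (degenerate Siegel Kloosterman sums) and a standard Bessel integral that evaluates to $J_\ell(4\pi\sqrt{\det(TQ)}/(cs))$ with an extra parameter $s$ from the second diagonal entry.
\item $\mathrm{rank}(C)=2$: summation over $C$ with $\det C\neq0$ gives genuine Siegel Kloosterman sums $K(Q,T;C)$, and the oscillatory integral over the $X$-coordinates of $Z$ collapses to the Bessel function of matrix argument $\mathcal J_\ell(TC^{-1}QC^{-t})$ introduced by Herz.
\end{itemize}

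The matching of constants $c_k$, $(-1)^{k/2}\sqrt2\pi/(c^{3/2}s^{1/2})$ and $8\pi^2/|\det C|^{3/2}$ comes from carefully tracking the normalizations of $j(\gamma,Z)^{-k}$, the change of variables for $X=\Re(Z)$, and the volume of $\mathcal P(\mathbb Z)$ as a lattice in $\mathcal P(\mathbb R)$; this bookkeeping, while routine, is where most mistakes are easy to make. I expect the main obstacle to be the rank 2 analysis: one must justify the interchange of integration and summation (the rank 2 sum is only conditionally convergent, and in the original Kitaoka paper this requires an additional weight-dependent convergence argument valid for $k\geq 6$), and one must correctly derive the integral representation of $\mathcal J_\ell$ as a two-variable generalization of the $J$-Bessel function, which in turn underlies all the later analytic work in Section \ref{section6}. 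Once these pieces are aligned, equating the spectral and geometric computations of the $Q$-th Fourier coefficient of $P_T$ yields the stated identity.
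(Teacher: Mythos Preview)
The paper does not actually prove this theorem: it is stated with attribution to Kitaoka and the definitions of $H^\pm$, $K$, $\mathcal J_\ell$ are quoted from \cite{Blomer2016spectral}, with the reader referred there and to \cite{Kitaoka} for details. So there is no ``paper's own proof'' to compare against; the formula is used as a black box.

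That said, your outline is the standard argument and is essentially what Kitaoka does. A couple of small points: your Poincar\'e series is slightly garbled---for holomorphic Siegel cusp forms one takes $P_T(Z)=\sum_{\gamma\in\Gamma_\infty\backslash\Sp_4(\mathbb Z)}\det(CZ+D)^{-k}e(\tr(T\,\gamma Z))$ without the factor $\det(\Im(\gamma Z))^{k/2}$ (that factor belongs to the $L^2$-normalized version, not to the holomorphic object whose Fourier coefficients you want). Also, in the rank~1 case the parameter $s$ is not ``the second diagonal entry'' of a Smith-normal-form representative of $C$; rather it arises as the common lower-right entry $p_4=s_4$ of $UQU^t$ and $V^{-1}TV^{-t}$, which is why the sum over $U,V$ is constrained and why $H^\pm$ carries the Kronecker delta $\delta_{p_4=s_4}$. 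These are bookkeeping corrections and do not affect the overall architecture, which you have right.
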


\begin{remark}
This theorem generalizes Petersson formula to Siegel modular forms of degree 2. It is proved similarly by considering inner products of Poincaré series. On the right-hand side, the three terms are called, in order, the diagonal term, the rank 1 and the rank 2 terms. In these latter, we only use trivial bounds on the generalized Kloosterman sums $H^\pm$ and $K$. This is because we have short $s,c$ and $C$ sums up to some negligible errors.
\end{remark}

We define the symbols appearing in the theorem. First, we have the constants
$$c_k=2\sqrt\pi(4\pi)^{3-2k}\Gamma(k-3/2)\Gamma(k-2)$$
and $\ell=k-3/2$. We write $Q\sim T$ if $Q$ and $T$ are equivalent as quadratic forms, i.e. there exists $M\in\GL_2(\mathbb Z)$ such that $Q=M^tTM$. The set $\Aut(T)=\{M\in\GL_2(\mathbb Z)\mid T=M^t TM\}$ consists of all automorphisms of $T$ in $\GL_2(\mathbb Z)$.

In the rank one term, we sum over integers $c,s$ and matrices
\begin{align*}
U&\in\left\{\begin{pmatrix}*&*\\0&*\end{pmatrix}\right\}\backslash\GL_2(\mathbb Z),&
V&\in\GL_2(\mathbb Z)/\left\{\begin{pmatrix}1&*\\0&*\end{pmatrix}\right\}\\
\intertext{We define}
P&=UQU^t=\left(\begin{smallmatrix}p_1&p_2/2\\p_2/2&p_4\end{smallmatrix}\right),&
S&=V^{-1}TV^{-t}=\left(\begin{smallmatrix}s_1&s_2/2\\s_2/2&s_4\end{smallmatrix}\right).
\end{align*}

Suppose that the bottom right entries of $P$ and $S$ are both equal to $s$. In that case, we define
$$H^\pm(P,S;c):=\delta_{p_4=s_4}\sumstar_{d_1\bmod c}\sum_{d_2\bmod c}e\left(\frac{\bar d_1s_4d_2^2\mp\bar d_1p_2d_2+s_2d_2+\bar d_1p_1+d_1s_1}c\mp\frac{p_2s_2}{2cs_4}\right).$$
Here $\sumstar$ means that the sum is on $d_1$ coprime to $c$. We have the trivial bound $|H^\pm(P,S,c)|\leq c^2$. The function $J_\ell$ is the Bessel function of the first kind.

For the degree 2 term, $C\in M_2(\mathbb Z)$ runs over all matrices with non-zero determinant. We define
$$K(Q,T;C):=\sum e(\tr(AC^{-1}Q+C^{-1}DT)),$$
where the sum is over matrices $\left(\begin{smallmatrix}A&*\\C&D\end{smallmatrix}\right)$ in a system of representatives of $\Gamma_\infty\backslash\Sp_4(\mathbb Z)/\Gamma_\infty$ for a fixed $C$, where $\Gamma_\infty=\{\left(\begin{smallmatrix}1&X\\0&1\end{smallmatrix}\right)\in\Sp_4(\mathbb Z)\}$. A trivial bound is $|K(Q,T;C)|\leq|\det(C)|^{3/2}$. The function $\mathcal J_\ell$ is a generalized Bessel function defined in the following way. Let $P$ be a diagonizable matrix with positive eigenvalues $s_1^2,s_2^2$ ($s_1,s_2>0$). Then
$$\mathcal J_\ell(P):=\int_0^{\pi/2}J_\ell(4\pi s_1\sin(\theta))J_\ell(4\pi s_2\sin(\theta))\sin(\theta)d\theta.$$

\subsection{Cut-off}
We define
\begin{align}\label{defVfunction}
V(x_1,x_2,\tau,k):=\int_{-\infty}^\infty\left(\frac{x_2}{x_1}\right)^{it}\frac1{2\pi i}\int_{(3)}e^{v^2}c_k^{-1}G(\tau,k,v+1/2+it)G(\tau,k,v+1/2-it)(x_1x_2)^{-v}\frac{dv}vdt.
\end{align}

With this definition, we rewrite the norm in a compact way:
\begin{align}\label{equNormPreKitaoka}
N(f)&=\frac{\pi^2}{1440}\sum_{T,Q\in\mathcal P(\mathbb Z)/\PSL_2(\mathbb Z)}\frac1{\epsilon(T)\epsilon(Q)\det(TQ)^{3/4}}\nonumber\\
 &\quad\cdot\int_{\Lambda_{\ev}}V(\det(T),\det(Q),t_\phi,k)\phi(z_T)\bar\phi(z_Q)d\phi\frac{a(T)a(Q)}{\Vert f\Vert_2^2}.
\end{align}

\begin{lemma}
Let $x_1,x_2>0$, $\tau\in\mathbb C$ such that $|\Im(\tau)|\leq2$, $k$ big enough and $A>0$. The function $V$ satisfies the following bounds:
\begin{align}\label{eqcutoff}
\left(\frac{x_1}{\sqrt k}\right)^{j_1}\left(\frac{x_2}{\sqrt k}\right)^{j_2}k^{\frac12j_3+j_4}\frac{d^{j_1}}{dx_1^{j_1}}\frac{d^{j_2}}{dx_2^{j_2}}&\frac{d^{j_3}}{d\tau^{j_3}}\frac{d^{j_4}}{dk^{j_4}}V(x_1,x_2,\tau,k)\nonumber\\
 \ll_{A,j_1,j_2,j_3,j_4}&k^2\left(1+\frac{x_1x_2}{k^4}\right)^{-A}\left(1+k^{1/2}|\log(x_2/x_1)|\right)^{-A}\left(1+\frac{|\tau|^2}k\right)^{-A}.
\end{align}
\end{lemma}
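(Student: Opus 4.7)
The plan is to apply Stirling's formula uniformly to the gamma factors, then extract the three decay rates via $v$-contour shifts and integration by parts in $t$. Write $\sigma=\ell/2+\tfrac12+\Re(v)=\tfrac{k}{2}-\tfrac14+\Re(v)$ for the common real part of the four gamma arguments, and $\mu_j\in\{\pm t\pm\tau/2+\Im(v)\}$ for their imaginary parts. The sum $\sum_j\mu_j^2=4t^2+\tau^2+O(1)$ governs the Stirling decay, while the imaginary contributions to $(\sigma-\tfrac12+i\mu_j)\log\sigma$ cancel in pairs across $j$. After absorbing the factors $2^{2k}\pi^{2k}$ between the pre-factor of $GG$ and $c_k^{-1}$, one obtains
$$c_k^{-1}\,G(\tau,k,v+\tfrac12+it)\,G(\tau,k,v+\tfrac12-it)\ \asymp\ k^{3/2+4\Re(v)}\,e^{-(4t^2+\tau^2)/k}$$
in the bulk regime $|\Im(v)|+|t|+|\tau|\ll k^{1/2}$, with super-polynomial decay beyond (coming from the exponential $e^{-\pi|\mu|/2}$ of Stirling). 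This immediately yields the factor $(1+|\tau|^2/k)^{-A}$, valid also for $|\Im(\tau)|\leq 2$ up to bounded constants.

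For the main size $k^2$ and the decay $(1+x_1x_2/k^4)^{-A}$, shift the $v$-contour from $\Re(v)=3$ to $\Re(v)=\alpha$, with $\alpha=\epsilon$ or $\alpha=A$, never crossing the simple pole at $v=0$. The bound $k^{3/2+4\alpha}(x_1x_2)^{-\alpha}$ on the $v$-integrand, multiplied by the effective length $\sqrt k$ of the $t$-integral (truncated by the Gaussian $e^{-4t^2/k}$), gives $k^2(k^4/x_1x_2)^{\alpha}$. Taking $\alpha=\epsilon$ when $x_1x_2\leq k^4$ and $\alpha=A$ otherwise produces the required bound. For the decay in $|\log(x_2/x_1)|$, integrate by parts $A$ times in $t$ against the phase $e^{it\log(x_2/x_1)}$; the $t$-smoothness scale of the integrand is $\sqrt k$ (dictated by the Gaussian), so each $t$-derivative costs at most $k^{-1/2}$ in the effective support, while each integration by parts extracts a factor $|\log(x_2/x_1)|^{-1}$. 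Iterating yields $(1+k^{1/2}|\log(x_2/x_1)|)^{-A}$; boundary terms vanish by the super-polynomial decay in $t$.

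The derivatives are controlled by the calibrated weights: differentiating in $x_j$ pulls down $-v/x_j$ from $(x_1x_2)^{-v}$, so $(x_j/\sqrt k)\partial_{x_j}$ inserts $-v/\sqrt k$, harmless thanks to $e^{v^2}$; $\partial_\tau$ acts on the Gaussian giving $\tau/k$ and on the gammas via digamma values $\psi(\sigma\pm i\tau/2)\approx\log\sigma$, both balanced by $k^{1/2}$; $\partial_k$ produces log-derivatives of gammas of size $\log k\ll k^\epsilon$, absorbed by the factor $k^{j_4}$. The principal obstacle is ensuring uniform control of the Stirling expansion and all of these derivative estimates across the whole parameter range, in particular bridging the Gaussian regime $|t|,|\tau|\ll\sqrt k$ and the exponential-decay regime $|t|,|\tau|\gg\sqrt k$, and verifying that the $v$-contour shifts and repeated integrations by parts in $t$ commute correctly with the iterated differentiations in $x_1,x_2,\tau,k$.
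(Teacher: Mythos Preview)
Your outline follows the same architecture as the paper's proof (which packages the Stirling step into Lemma~\ref{lemBoundGammaFactors}, then shifts the $v$-contour and integrates by parts in $t$), but two steps do not go through as written.

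First, staying on $\Re(v)=\epsilon>0$ does not yield the clean $k^2$. On that line the bound for the $v$-integrand is $k^{3/2+4\epsilon}(x_1x_2)^{-\epsilon}$, and the near-singularity of $1/v$ contributes an extra $\log(1/\epsilon)$; after the $t$-integral you only get $k^{2+4\epsilon}$, and you cannot send $\epsilon\to0$ because the implied constant blows up. The paper instead moves the contour to $\Re(v)=-\tfrac14$, \emph{crossing} the simple pole at $v=0$: the residue $c_k^{-1}G(\tau,k,\tfrac12+it)G(\tau,k,\tfrac12-it)$ is exactly of size $k^{3/2}(1+(t^2+|\tau|^2)/k)^{-A}$, and the remaining integral on $\Re(v)=-\tfrac14$ is smaller in the range $x_1x_2\le k^4$. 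This is what produces $k^2$ rather than $k^{2+\epsilon}$.

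Second, your treatment of the $k$-derivative is incorrect. You say $\partial_k$ brings down a digamma of size $\log k$, ``absorbed by the factor $k^{j_4}$''; but $k^{j_4}$ is the \emph{weight}, so if each $\partial_k$ really cost $\log k$ then $k^{j_4}\partial_k^{j_4}V$ would be larger than $V$ by $(k\log k)^{j_4}$, which is absurd. What actually happens is that the $\log k$ contributions from $\psi(k-3/2)+\psi(k-2)$ in $\partial_k\log c_k$ cancel against those from $\tfrac12\sum_{\pm}\psi(\ell/2+s\pm i\tau/2)$ in $\partial_k\log(GG)$ (together with the explicit powers of $2\pi$ and $4\pi$), leaving $\partial_k\log(c_k^{-1}GG)=O(1/k)$. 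This cancellation is the content of the derivative bounds in Lemma~\ref{lemBoundGammaFactors} (inherited from Lemma~\ref{lemGammaFunctionBounds}); without it the weighted bound fails. A smaller omission: in the $x_j$-derivative you only account for $-v/x_j$ from $(x_1x_2)^{-v}$, but $\partial_{x_j}$ also hits $(x_2/x_1)^{it}$, inserting $\pm it/x_j$. This is precisely why the weight is $x_j/\sqrt k$ rather than $x_j$ (since $|t|\ll\sqrt k$ in the effective range); the paper's remark after the lemma flags exactly this point.
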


\begin{remark}
This lemma is similar to Equations (10.5) in \cite{Blomer2019symplectic}. There is an error there in the derivatives of $x_1$ and $x_2$. The integral over $t$ add an extra $k^{1/2}$ term for each derivative. Note that the term is corrected when used later in Section 10.
\end{remark}

\begin{proof}
We can bound $G$ using the decay of the $\Gamma$ function. We establish the relevant bounds in the next section. First, we consider the inner integral
$$V_1(x,t,\tau,k)=\frac1{2\pi i}\int_{(3)}e^{v^2}c_k^{-1}G(\tau,k,v+1/2+it)G(\tau,k,v+1/2-it)x^{-v}\frac{dv}v$$
where $x>0$ and the other variables are as above. We prove
\begin{align}\label{equDecayV1}
x^{j_1}k^{\frac12j_2+\frac12j_3+j_4}\frac{d^{j_1}}{dx^{j_1}}\frac{d^{j_2}}{dt^{j_2}}\frac{d^{j_3}}{d\tau^{j_3}}\frac{d^{j_4}}{dk^{j_4}}V_1(x,t,\tau,k)\ll_{A,j_1,j_2,j_3,j_4}k^{3/2}\left(1+\frac x{k^4}\right)^{-A}\left(1+\frac{t^2+|\tau|^2}k\right)^{-A}.
\end{align}

This is similar to Equation (9.16) in \cite{Blomer2019symplectic}. All the derivatives except the one in $x$ are already treated in Lemma \ref{lemBoundGammaFactors}. First, we move the $v$-integral to a large real part $\Re(v)=A$. Then we apply Lemma \ref{lemBoundGammaFactors} and integrate by parts.
\begin{align*}
x^j\frac{d^j}{dx^j}&V_1(x,t,	\tau,k)\\
	&=\frac1{2\pi i}\int_{(A)}e^{v^2}c_k^{-1}G(\tau,k,v+1/2+it)G(\tau,k,v+1/2-it)x^{-v}(-v)\dots(-v-j+1)\frac{dv}v\\
	&=\frac1{2\pi i}\int_{(A)}e^{v^2}G_A(k,t,\tau,v)x^{-v}(-v)\dots(-v-j+1)\frac{dv}v+O_A(k^{-A})\\
	&\ll_Ak^{3/2+4A}\frac1{2\pi i}\int_{(A)}e^{-|v|^2}\left(1+\frac{t^2+|\tau|^2+\Im(v)^2}k\right)^{-A}x^{-A}|v|\dots|v+j-1|\frac{dv}{|v|}+O_A(k^{-A})\\
	&\ll_{A,j} k^{3/2}\left(\frac x{k^4}\right)^{-A}\left(1+\frac{t^2+|\tau|^2}k\right)^{-A}.
\end{align*}

Now, we move the $v$-integral to $\Re(v)=-\frac14$. If $j=0$, we pick up a pole at $v=0$.
\begin{align*}
x^j\frac{d^j}{dx^j}&V_1(x,t,\tau,k)\\
	&=\frac1{2\pi i}\int_{(-1/4)}e^{v^2}c_k^{-1}G(\tau,k,v+1/2+it)G(\tau,k,v+1/2-it)x^{-v}(-v)\dots(-v-j+1)\frac{dv}v\\
	&=\frac1{2\pi i}\int_{(-1/4)}e^{v^2}G_A(k,t,\tau,v)x^{-v}(-v)\dots(-v-j+1)\frac{dv}v+\delta_{j0}G_A(k,t,\tau,0)+O_A(k^{-A})\\
	&\ll_Ak^{1/2}\frac1{2\pi i}\int_{(-1/4)}e^{-|v|^2}\left(1+\frac{t^2+|\tau|^2+\Im(v)^2}k\right)^{-A}x^{1/4}|v|\dots|v+j-1|\frac{dv}{|v|}\\
	&\quad+k^{1/2}\left(1+\frac{t^2+|\tau|^2+\Im(v)^2}k\right)^{-A}+O_A(k^{-A})\\
	&\ll_A k^{3/2}\left(\left(\frac x{k^4}\right)^{1/4}+\delta_{j0}\right)\left(1+\frac{t^2+|\tau|^2}k\right)^{-A}.
\end{align*}

We conclude that Equation (\ref{equDecayV1}) holds. We consider now the $t$-integral. The only derivatives that we need to consider are the ones in $x_1$ and $x_2$. First, we integrate by parts.
\begin{align*}
V(x_1,x_2,\tau,k)&=\int_{-\infty}^\infty\left(\frac{x_2}{x_1}\right)^{it}V_1(x_1x_2,t,\tau,k)dt\\
	&=\int_{-\infty}^\infty(i\log(x_2/x_1))^{-j}\left(\frac{x_2}{x_1}\right)^{it}\frac{d^j}{dt^j}V_1(x_1x_2,t,\tau,k)dt\\
	&\ll_{A,j}k^{3/2}(k^{1/2}|\log(x_2/x_1)|)^{-j}\left(1+\frac{x_1x_2}{k^4}\right)^{-A}\int_{-\infty}^\infty\left(1+\frac{t^2+|\tau|^2}k\right)^{-A}dt\\
	&\ll_{A,j}k^2(k^{1/2}|\log(x_2/x_1)|)^{-j}\left(1+\frac{x_1x_2}{k^4}\right)^{-A}\left(1+\frac{|\tau|^2}k\right)^{-A}.
\end{align*}

Considering $j=0$ and $j=A$, we get the correct result for $j_1=j_2=0$ in Equation (\ref{eqcutoff}). If we differentiate with respect to $x_1$ or $x_2$, we get an extra factor $\frac1{x_1}$ resp. $\frac1{x_2}$ and another factor of size either 1 or $k^{1/2}$. We conclude that the result holds.
\end{proof}

\section{Technical lemmas}\label{section3}
We gather here various estimates and lemmas for the rest of the article. Most of them come from Section 6 of \cite{Blomer2019symplectic}.

\subsection{Gamma factors}
\begin{lemma}[\cite{Blomer2019symplectic}, Lemma 22]\label{lemGammaFunctionBounds}
Let $k\geq1$, $s=\sigma+it$ such that $k+\sigma\geq1/2$ and $A\in\mathbb N$, $i,j\in\mathbb N_0$. Then
$$\frac{\Gamma(k+s)}{\Gamma(k)}=k^sG_{A,\sigma}(k,t)+O_{A,\sigma}((k+|t|)^{-A}),$$
where
$$k^{j_1+j_2/2}\frac{d^{j_1}}{dk^{j_1}}\frac{d^{j_2}}{dt^{j_2}}G_{A,\sigma}(k,t)\ll_{A,\sigma,j_1,j_2}\left(1+\frac{t^2}k\right)^{-A}.$$
Moreover,
$$\frac{\Gamma(k+s)}{\Gamma(k)}=k^s\exp\left(-\frac{t^2}{2k}\right)\left(1+O_\sigma\left(\frac{|t|}k+\frac{t^4}{k^3}\right)\right).$$
\end{lemma}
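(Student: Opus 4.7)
The plan is to derive the lemma from Stirling's asymptotic expansion
$$\log\Gamma(z) = \Bigl(z-\tfrac12\Bigr)\log z - z + \tfrac12\log(2\pi) + \sum_{n=1}^{N} \frac{B_{2n}}{2n(2n-1) z^{2n-1}} + O_N\bigl(|z|^{-2N-1}\bigr),$$
valid uniformly in $|\arg z| \le \pi/2+\delta$. Applying this to $z = k+s$ and $z = k$, subtracting, writing $\log(k+s) = \log k + \log(1+s/k)$, and Taylor-expanding both $\log(1+s/k)$ and each $(k+s)^{-(2n-1)}$ in powers of $s/k$, one collects terms by order in $1/k$ to obtain
$$\log\frac{\Gamma(k+s)}{\Gamma(k)\,k^s} = \frac{s^2-s}{2k} + \sum_{m=2}^{M} \frac{P_m(s)}{k^m} + O_M\!\left(\frac{(1+|s|)^{M+1}}{k^{M+1}}\right),$$
where each $P_m$ is a fixed polynomial in $s$ of degree at most $2m$; I would take $M = M(A)$ suitably large.

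Next, I would \emph{define} $G_{A,\sigma}(k,t)$ by exponentiating this truncated expansion and isolating the Gaussian factor: writing $s = \sigma + it$, the leading term has real part $(\sigma^2-\sigma-t^2)/(2k)$, so factoring out $e^{-t^2/(2k)}$ and Taylor-truncating the remaining exponential yields
$$G_{A,\sigma}(k,t) = \exp\!\left(-\frac{t^2}{2k}\right) R_{A,\sigma}(k,t),$$
with $R_{A,\sigma}(k,t)$ a polynomial in $t$ whose coefficients are polynomials in $1/k$. The derivative bounds then follow from the product rule combined with the elementary estimate $(|t|/\sqrt{k})^m e^{-t^2/(2k)} \ll_{m,A} (1+t^2/k)^{-A}$: each $t$-derivative contributes at most $O(k^{-1/2})$ (either through the factor $-t/k$ from the Gaussian or by dropping the polynomial degree by one), and each $k$-derivative contributes at most $O(k^{-1})$. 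This matches the normalization $k^{j_1+j_2/2}$ in the claim.

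The main obstacle is the regime $|t| \gtrsim k$, where the termwise expansion in $s/k$ ceases to converge. Here I would bypass the expansion altogether and use global Stirling directly: the bound $|\Gamma(k+\sigma+it)| \ll (k+|t|)^{k+\sigma-1/2}\, e^{-\pi|t|/2}$, together with $|\Gamma(k)| \asymp k^{k-1/2}e^{-k}$, gives exponential decay in $|t|$ for both $\Gamma(k+s)/(\Gamma(k)\, k^s)$ and $G_{A,\sigma}(k,t)$, so the error statement $O_{A,\sigma}((k+|t|)^{-A})$ holds trivially and one still satisfies the claimed derivative bound. Finally, the last displayed formula of the lemma is the case $M = 1$: retain only $(s^2-s)/(2k)$ with remainder $O(|s|^3/k^2)$, exponentiate, and read off real and imaginary parts of $(s^2-s)/(2k)$ to extract $e^{-t^2/(2k)}$ together with the explicit error $O_\sigma(|t|/k + t^4/k^3)$ stated.
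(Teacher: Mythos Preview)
The paper does not prove this lemma; it is quoted verbatim from \cite{Blomer2019symplectic}, Lemma~22, and used as a black box. Your Stirling-based derivation is the standard route to such statements and is essentially correct: expand $\log\Gamma(k+s)-\log\Gamma(k)-s\log k$ in powers of $1/k$, factor out the Gaussian $e^{-t^2/(2k)}$, and truncate the remaining exponential to a polynomial. The derivative bookkeeping you describe (each $\partial_t$ costing $k^{-1/2}$, each $\partial_k$ costing $k^{-1}$, all controlled by the Gaussian) is exactly right, and your treatment of the transition regime $|t|\gtrsim k$ via global Stirling decay is the correct idea, though it deserves a line of care since the ratio $\Gamma(k+s)/\Gamma(k)$ carries a factor of size roughly $(|t|/k)^k$ before the exponential damping $e^{-\pi|t|/2}$ takes over.

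One small point on the final display: truncating at $M=1$ leaves a remainder of order $|s|^3/k^2$ in the exponent, so after extracting $e^{-t^2/(2k)}$ the natural error is $O_\sigma(|t|/k + |t|^3/k^2)$, not $O_\sigma(|t|/k + t^4/k^3)$ as stated. These two expressions are \emph{not} comparable in general (take $|t|\sim k^{2/3}$), so ``read off the explicit error'' does not quite work as written. In the paper's applications only the range $|t|\ll k^{1/2+\epsilon}$ is used, where all such terms are $O(k^{-1/2+\epsilon})$ and the distinction is immaterial; but if you want the error exactly as stated you should either go one order further in the expansion and check the $k^{-2}$ coefficient explicitly, or note that the printed form of the error term may be a slip inherited from the source.
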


\begin{lemma}[\cite{Blomer2019symplectic}, similar to Corollary 23]\label{lemBoundGammaFactors}
Let $A\geq0$, $\sigma\geq-1/4$, $t\in\mathbb R$, $\tau\in\mathbb C$ such that $|\Im(\tau)|\leq2$ and $k\in2\mathbb N$. Then
$$c_k^{-1/2}G(\tau,k,\sigma+1/2+it)\ll_{A,\sigma}k^{2\sigma+3/4}\left(1+\frac{t^2+|\tau|^2}k\right)^{-A}$$
We also have, for $v\in\mathbb C$,
$$c_k^{-1}G(\tau,k,v+1/2+it)G(\tau,k,v+1/2-it)=G_A(k,t,\tau,v)+O
_{\Re(v),A}(k^{-A}),$$
where
$$k^{j_1+j_2/2+j_3/2}\frac{d^{j_1}}{dk^{j_1}}\frac{d^{j_2}}{dt^{j_2}}\frac{d^{j_3}}{d\tau^{j_3}}G_A(k,t,\tau,v)\ll_{A,j_1,j_2,j_3,\Re(v)}k^{3/2+4\Re(v)}\left(1+\frac{t^2+|\tau|^2+\Im(v)^2}k\right)^{-A}.$$
Moreover, for $t,\tau\ll k^{1/2+\epsilon}$, we have
$$c_k^{-1}G(\tau,k,1/2+it)G(\tau,k,1/2-it)=\frac2{\pi^{5/2}}k^{3/2}\exp\left(-\frac{4t^2+\tau^2}k\right)\left(1+O(k^{-1/2+\epsilon})\right).$$
\end{lemma}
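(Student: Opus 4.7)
The overall strategy is to reduce every Gamma factor appearing in $G$ and in $c_k$ to the form $\Gamma(\ell/2+\text{bounded shift})$, so that Lemma~\ref{lemGammaFunctionBounds} can be applied with $k\mapsto\ell/2$. The key tool is the Legendre duplication formula $\Gamma(2z)=(2\pi)^{-1/2}2^{2z-1/2}\Gamma(z)\Gamma(z+1/2)$, applied to both $\Gamma(\ell)=\Gamma(k-3/2)$ and $\Gamma(\ell-1/2)=\Gamma(k-2)$, yielding
$$\Gamma(k-3/2)\Gamma(k-2)=(2\pi)^{-1}2^{2\ell-3/2}\,\Gamma(\ell/2)\Gamma(\ell/2+1/2)\Gamma(\ell/2-1/4)\Gamma(\ell/2+1/4).$$
Substituting into $c_k^{-1}$ and combining with $G(\tau,k,s)=4(2\pi)^{1-k-2s}\Gamma(\ell/2+s+i\tau/2)\Gamma(\ell/2+s-i\tau/2)$, the powers of $2$ and $\pi$ depending on $k$ cancel cleanly, leaving a bounded constant times a ratio of four $\Gamma$-factors in the numerator over four in the denominator, all evaluated at $\ell/2+O(1)$.

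For the first bound I would apply Lemma~\ref{lemGammaFunctionBounds} to each numerator factor, with effective real shift $\sigma+1/2\mp\Im\tau/2$ and imaginary part $t\pm\Re\tau/2$, and to each denominator factor $\Gamma(\ell/2+\alpha)^{1/2}$ for $\alpha\in\{0,1/2,\pm 1/4\}$. Since $|\Im\tau|\leq 2$ and $\sigma\geq -1/4$ the shifts stay safely above $-1/2$, so the hypothesis of the lemma is met. The magnitudes $(\ell/2)^{\Re\alpha}$ in the numerator sum to $(\ell/2)^{2\sigma+1}$ (the $\mp\Im\tau/2$ contributions cancel across the two factors), the denominator contributes $(\ell/2)^{1/4}$, and the ratio is $(\ell/2)^{2\sigma+3/4}$, exactly matching the claimed polynomial. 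The two Gaussian decay factors $(1+(t\pm\Re\tau/2)^2/k)^{-A}$ combine via the elementary inequality $(1+a)(1+b)\geq 1+a+b$ applied to $a=(t+\Re\tau/2)^2/k$ and $b=(t-\Re\tau/2)^2/k$, using the identity $a+b=(2t^2+(\Re\tau)^2/2)/k\gg(t^2+|\tau|^2)/k$, to produce the stated decay.

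For the factored asymptotic I would define $G_A(k,t,\tau,v)$ as the product over all factors of the main terms $(\ell/2)^{\alpha}\,G_{A,\Re\alpha}(\ell/2,\cdot)$ produced by Lemma~\ref{lemGammaFunctionBounds}, absorbing every $O((\ell/2+|\cdot|)^{-A})$ remainder into the error. Expanding the product and comparing with the polynomial growth $(\ell/2)^{3/2+4\Re v}$ shows that each discarded remainder is at most $k^{3/2+4\Re v-A}$, and after relabelling $A$ this yields the advertised bound $O_{\Re v,A}(k^{-A})$. Derivatives in $k,t,\tau$ are then handled by Leibniz on the explicit product: each derivative in $k$ or $t$ costs the factor $k^{-j_1-j_2/2}$ coming from Lemma~\ref{lemGammaFunctionBounds}, while derivatives in $\tau$ act through the imaginary shifts $\pm i\tau/2$ in the two numerator factors, so each $d/d\tau$ behaves like $\frac12 d/dt$ and costs the same $k^{-1/2}$.

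Finally, for the sharper asymptotic in the range $t,\tau\ll k^{1/2+\epsilon}$, I would invoke the second, exponential form of Lemma~\ref{lemGammaFunctionBounds} for each of the four Gamma factors $\Gamma(\ell/2+1/2+\epsilon_1 it+\epsilon_2 i\tau/2)$, $\epsilon_1,\epsilon_2\in\{\pm\}$. The key identity
$$\sum_{\epsilon_1,\epsilon_2\in\{\pm\}}\bigl(\epsilon_1 t+\epsilon_2\tau/2\bigr)^2=4t^2+\tau^2$$
collapses the four factors $\exp\bigl(-(\epsilon_1 t+\epsilon_2\tau/2)^2/(2\cdot\ell/2)\bigr)$ into a single $\exp(-(4t^2+\tau^2)/k)(1+O(1/k))$, while the polynomial exponents simplify to $(\ell/2)^{3/2}=k^{3/2}(1+O(1/k))$, and the constant from paragraph one turns out to be exactly $2/\pi^{5/2}$. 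In the assumed range, each per-factor error $|t|/k+t^4/k^3$ is $O(k^{-1/2+\epsilon})$, and multiplying four copies changes only the implicit constant.

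The main obstacle I anticipate is the constant bookkeeping: tracking the interaction of the duplication factor $2^{2\ell-3/2}(2\pi)^{-1}$ with the $(2\pi)^{1-k-2s}$ coming from $G$ and with the normalization $2\sqrt\pi(4\pi)^{3-2k}$ of $c_k$, carefully enough to confirm the explicit $2/\pi^{5/2}$ in the final asymptotic. Everything else is a routine application of Lemma~\ref{lemGammaFunctionBounds}, but getting this constant right requires patience.
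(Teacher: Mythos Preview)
Your proposal is correct and follows essentially the same route as the paper: apply Legendre duplication to split $c_k$ into four $\Gamma$-factors at $\ell/2+O(1)$, form the ratio with the numerator factors $\Gamma(\ell/2+s\pm i\tau/2)$, and invoke Lemma~\ref{lemGammaFunctionBounds} termwise (the paper does the first bound by squaring $G$ and computing $c_k^{-1}G^2$, but this is only a cosmetic difference). The sum-of-squares identity $\sum_{\epsilon_1,\epsilon_2}(\epsilon_1 t+\epsilon_2\tau/2)^2=4t^2+\tau^2$ and the constant bookkeeping you flag are exactly what the paper carries out.
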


\begin{proof}
Using the Legendre duplication formula, $\Gamma(z)\Gamma(z+1/2)=2^{1-2z}\sqrt\pi\Gamma(2z)$, we get
\begin{align*}
c_k&=2\sqrt\pi(4\pi)^{3-2k}\Gamma(k-3/2)\Gamma(k-2)\\
 &=2\sqrt\pi(4\pi)^{3-2k}2^{2k-7/2-2}\pi^{-1}\Gamma\left(\frac{k-3/2}2\right)\Gamma\left(\frac{k-1/2}2\right)\Gamma\left(\frac{k-2}2\right)\Gamma\left(\frac{k-1}2\right)\\
 &=2^{-1}(2\pi)^{5/2-2k}\Gamma\left(\frac{k-3/2}2\right)\Gamma\left(\frac{k-1/2}2\right)\Gamma\left(\frac{k-2}2\right)\Gamma\left(\frac{k-1}2\right).
\end{align*}

Taking the square of the gamma factor, this gives
\begin{align*}
c_k^{-1}G(\tau,k,\sigma+1/2+it)^2&=\frac{2^4(2\pi)^{-2k-4\sigma-4it}}{2^{-1}(2\pi)^{5/2-2k}}\frac{\Gamma\left(\frac{k-1/2}2+\sigma+it+\frac{i\tau}2\right)^2\Gamma\left(\frac{k-1/2}2+\sigma+it-\frac{i\tau}2\right)^2}{\Gamma\left(\frac{k-3/2}2\right)\Gamma\left(\frac{k-1/2}2\right)\Gamma\left(\frac{k-2}2\right)\Gamma\left(\frac{k-1}2\right)}\\
 &=2^5(2\pi)^{-4(\sigma+it)-5/2}\frac{\Gamma\left(\frac{k-1/2}2+\sigma+it+\frac{i\tau}2\right)^2\Gamma\left(\frac{k-1/2}2+\sigma+it-\frac{i\tau}2\right)^2}{\Gamma\left(\frac{k-3/2}2\right)\Gamma\left(\frac{k-1/2}2\right)\Gamma\left(\frac{k-2}2\right)\Gamma\left(\frac{k-1}2\right)}.\\
\intertext{Applying Lemma \ref{lemGammaFunctionBounds}, we get}
 &\ll_{A,\sigma}(k/2)^{4\sigma+1/2+0+3/4+1/4}\left(1+\frac{(t+\tau)^2}k\right)^{-A}\\
 &\ll_{A,\sigma}k^{4\sigma+3/2}\left(1+\frac{t^2+|\tau|^2}k\right)^{-A}.
\end{align*}

This gives the first formula. Similarly, for the second formula, we compute
\begin{align*}
c_k^{-1}G(\tau,k&,v+1/2+it)G(\tau,k,v+1/2-it)\\
	&=2^5(2\pi)^{-4v-5/2}\frac{\Gamma\left(\frac{k-1/2}2+v+it+\frac{i\tau}2\right)\Gamma\left(\frac{k-1/2}2+v+it-\frac{i\tau}2\right)}{\Gamma\left(\frac{k-3/2}2\right)\Gamma\left(\frac{k-1/2}2\right)}\\
	&\quad\cdot\frac{\Gamma\left(\frac{k-1/2}2+v-it+\frac{i\tau}2\right)\Gamma\left(\frac{k-1/2}2+v-it-\frac{i\tau}2\right)}{\Gamma\left(\frac{k-2}2\right)\Gamma\left(\frac{k-1}2\right)}\\
	&=2^5(2\pi)^{-4v-5/2}(k/2)^{4v+3/2}G_{A,\sigma}(k,t,\tau,\Im(v))+O_{A,\sigma}\left((k+|t|+|\tau|)^{-A}\right),
\end{align*}
where $G_{A,\sigma}$ is the combination of the functions $G_{A,\sigma}$ in Lemma \ref{lemGammaFunctionBounds} for the four ratios of gamma functions. We have the following properties for the $G_{A,\sigma}$ function:
\begin{align*}
k^{j_1+j_2/2+j_3/2}\frac{d^{j_1}}{dk^{j_1}}\frac{d^{j_2}}{dt^{j_2}}\frac{d^{j_3}}{d\tau^{j_3}}G_{A,\sigma}(k,t,\tau,\Im(v))\ll_{A,\sigma,j_1,j_2,j_3}\left(1+\frac{\Im(v)^2+t^2+|\tau|^2}k\right)^{-A}.
\end{align*}

We used that if $k\ll\Im(v)^2+t^2+|\tau|^2$, then $k\ll(\Im(v)\pm t\pm\tau)^2$ for one of the choices of signs. The last equation comes from the corresponding formula in Lemma \ref{lemGammaFunctionBounds}. We get
\begin{align*}
c_k^{-1}&G(\tau,k,1/2+it)G(\tau,k,1/2-it)\\
	&=2^5(2\pi)^{-5/2}\frac{\Gamma\left(\frac{k-1/2}2+it+\frac{i\tau}2\right)\Gamma\left(\frac{k-1/2}2+it-\frac{i\tau}2\right)\Gamma\left(\frac{k-1/2}2-it+\frac{i\tau}2\right)\Gamma\left(\frac{k-1/2}2-it-\frac{i\tau}2\right)}{\Gamma\left(\frac{k-3/2}2\right)\Gamma\left(\frac{k-1/2}2\right)\Gamma\left(\frac{k-2}2\right)\Gamma\left(\frac{k-1}2\right)}\\
	&=2^5(2\pi)^{-5/2}(k/2)^{3/2}\exp\left(-\frac{2(t+\tau/2)^2+2(t-\tau/2)^2}k\right)\left(1+O(k^{-1/2+\epsilon})\right)\\
	&=\frac{2k^{3/2}}{\pi^{5/2}}\exp\left(-\frac{4t^2+\tau^2}k\right)\left(1+O(k^{-1/2+\epsilon})\right).
\end{align*}

\end{proof}

\subsection{The $J$ Bessel function and the spectral integral}
Concerning the $J$ Bessel function, we need the estimates
\begin{align}\label{boundsJBessel}
J_k(x)\ll&1, &J_k(x)\ll&\left(\frac xk\right)^k, &J_k(x)\ll&x^{-1/2}.
\end{align}
The first two are valid for $x>0$ and $k>2$ and the last one for $x\geq2k$ as stated in Equations (4.1), (4.2) and (4.3) of \cite{Blomer2016spectral}. Moreover Equation (4.7) in the same article says that the product of two Bessel functions can be rewritten in the following way:
\begin{align}\label{equProductOfBesselFunctions}
J_k(4\pi s_1\sin(\alpha))J_k(4\pi s_2\sin(\alpha))&=\frac1\pi\Re\left(e\left(-\frac{k+1}4\right)\int_0^\infty e\left((s_1^2+s_2^2)t+\frac{\sin(\alpha)^2}t\right) J_k(4\pi s_1s_2t)\frac{dt}t\right).
\end{align}

The following lemma is used to take advantage of the average over $k$.
\begin{lemma}[\cite{Blomer2019symplectic}, Lemma 20 and the remark after]\label{lemSumK}
Let $x>0$, $A\geq0$, $K>1$, $w:\mathbb R\to\mathbb C$ smooth with support in $[1,2]$, such that $w^{(j)}(x)\ll_\epsilon K^{j\epsilon}$. Then there exist smooth functions $w_0,w_-$ and $w_+$ such that for all $j\in\mathbb N_0$, we have

$$\sum_{k\text{ even}}i^kw\left(\frac kK\right)J_{k-3/2}(x)=w_0(x)+e^{ix}w_+(x)+e^{-ix}w_-(x)$$

and
\begin{align*}
 w_0(x)&\ll_AK^{-A},\\
 \frac{d^j}{dx^j}w_\pm(x)&\ll_{j,A}\left(1+\frac{K^2}x\right)^{-A}\frac1{x^j}
\end{align*}

Moreover, if $w$ depends on other parameters with control over the derivatives, so do $w_\pm$. We also have $w_0(x),w_\pm(x)\ll x^{-1/2}$.
\end{lemma}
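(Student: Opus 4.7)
The plan is to use the Schläfli integral representation of the Bessel function,
$$J_\nu(x) = \frac{1}{\pi}\int_0^\pi \cos(\nu\theta - x\sin\theta)\,d\theta - \frac{\sin(\nu\pi)}{\pi}\int_0^\infty e^{-\nu t - x\sinh t}\,dt,$$
with $\nu=k-3/2$ (so $\sin(\nu\pi)=1$ for even $k$), evaluate the $k$-sum by Poisson summation before the $\theta$-integral, and extract the two oscillating phases $e^{\pm ix}$ by stationary phase near $\theta=\pi/2$. The second integral, after multiplication by $i^k w(k/K)$ and summation, contributes $O_A(K^{-A})$: for $t\gg K^{-1+\epsilon}$ the factor $e^{-(k-3/2)t}$ gives pointwise decay in $k$, and for small $t$ the inner sum $\sum_m(-1)^m w(2m/K)e^{-(2m-3/2)t}$ is rendered negligible by Poisson, since $(-1)^m$ has no integer Fourier frequency. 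This exponentially small piece is absorbed into $w_0(x)$.

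For the first integral, I would split the cosine into two exponentials and exchange sum with the $\theta$-integral. The two inner $k$-sums
$$\sum_{k\text{ even}} i^k\,w\!\left(\frac{k}{K}\right) e^{\pm i(k-3/2)\theta} = e^{\mp 3i\theta/2}\sum_{k\text{ even}} w\!\left(\frac{k}{K}\right)e^{ik(\pi/2\pm\theta)}$$
are analyzed by Poisson summation with $k=2m$. Since $w$ is Schwartz, each sum collapses to a smooth profile of total mass $\asymp K$ supported in $|\theta-\pi/2|\ll K^{-1+\epsilon}$ (the only dual frequency hitting $[0,\pi]$), up to an error $O_A(K^{-A})$ absorbed in $w_0$. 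Inside each localization window I set $\theta=\pi/2+\psi$ and expand $x\sin\theta=x-\tfrac12 x\psi^2+O(x\psi^4)$, obtaining the two phases $e^{\pm ix}$ (from the two exponential pieces of the cosine) times a smooth amplitude $w_\pm(x)$ defined by an oscillatory $\psi$-integral with quadratic phase $e^{\mp ix\psi^2/2}$ and amplitude given by a Fourier transform of $w$.

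The remaining task is to prove the decay bounds on $w_\pm$. When $x\ll K^{2-\epsilon}$, the Gaussian width $1/\sqrt{x}$ exceeds the Poisson-localization scale $1/K$; integrating by parts repeatedly in $\psi$ against the quadratic phase and using the rapid decay of the Fourier transform of $w$ yields arbitrary polynomial decay in $K^2/x$, packaged in the factor $(1+K^2/x)^{-A}$. When $x\gg K^{2+\epsilon}$ stationary phase applies and yields the stated amplitude. A single uniform integration-by-parts argument with cut-off at scale $\min(1/K,1/\sqrt{x})$ interpolates across the transition, and differentiating $j$ times in $x$ brings down $(\psi^2)^j$, which costs $x^{-j}$ upon $j$ further integrations by parts against the phase.

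The hardest point will be the uniform handling of the transition region $x\asymp K^2$, where neither pure stationary phase nor pure large-phase integration by parts is directly clean; to obtain the bound $(1+K^2/x)^{-A}$ in one stroke one typically needs a careful change of variables or a smooth partition adapted to both the Gaussian and Poisson scales simultaneously. A secondary technical point is to track the $K^\epsilon$-blowup of derivatives of $w$ (allowed by the hypothesis) through the Fourier transform so that the lemma remains applicable when $w$ depends smoothly on extra parameters.
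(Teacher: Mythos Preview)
The paper does not give its own proof of this lemma; it is quoted verbatim from \cite{Blomer2019symplectic}, Lemma~20 (and the remark following it). Your plan---Schl\"afli's integral for $J_{k-3/2}$, Poisson summation in $k=2m$ to localize the $\theta$-integral at $\theta=\pi/2$ (the unique point in $[0,\pi]$ where the twisted frequency $\pi/2\pm\theta$ hits $\pi\mathbb Z$), and extraction of $e^{\pm ix}$ from $\sin(\pi/2+\psi)=\cos\psi$---is exactly the standard argument and is how the cited result is established. The treatment of the second Schl\"afli integral via the alternating sign and Poisson is also correct.

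One caution: do not \emph{define} $w_\pm$ through the quadratic truncation $x\sin\theta\approx x-\tfrac12x\psi^2$, since the discarded $O(x\psi^4)=O(xK^{-4+\epsilon})$ is not uniformly small in $x$. Instead set
\[
w_\pm(x)=\frac{1}{2\pi}\int e^{\mp ix(\cos\psi-1)}\,W_\pm(\psi)\,d\psi
\]
with the exact phase and with $W_\pm$ the Poisson-localized profile (Schwartz at scale $K^{-1}$); then the bound $(1+K^2/x)^{-A}$ and the derivative estimates follow uniformly from integration by parts in $\psi$ against the exact phase $h(\psi)=x(\cos\psi-1)$, for which $h'(\psi)\asymp x\psi$ and $h^{(j)}(\psi)\ll x$ on the support. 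This also handles the transition $x\asymp K^2$ without a separate argument. The final claim $w_0,w_\pm\ll x^{-1/2}$ is a separate trivial estimate coming from $J_\nu(x)\ll x^{-1/2}$ applied termwise before any manipulation, together with the localization to an interval of length $\asymp 1/K$ cancelling the $\asymp K$ terms in the $k$-sum.
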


We state a general upper bound for the spectral integral. 
\begin{lemma}\label{lemBoundNonDiagSpect}
Let $z_1,z_2\in\mathbb H$ with $\Im(z_1),\Im(z_2)\gg T$ and $T\geq1$. Then
\begin{align*}
\int_{\substack{\Lambda_{\ev}\\|t_\phi|\ll T}}|\phi(z_1)\phi(z_2)|d\phi\ll_A T\sqrt{\Im(z_1)\Im(z_2)}.
\end{align*}
\end{lemma}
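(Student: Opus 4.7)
The plan is to first reduce the two-point question to a pointwise bound at a single point via Cauchy-Schwarz:
$$\int_{\substack{\Lambda_{\ev}\\|t_\phi|\ll T}}|\phi(z_1)\phi(z_2)|d\phi\leq\prod_{j=1,2}\left(\int_{\substack{\Lambda_{\ev}\\|t_\phi|\ll T}}|\phi(z_j)|^2d\phi\right)^{1/2},$$
so it suffices to prove $\int_{|t_\phi|\ll T}|\phi(z)|^2d\phi\ll T\Im(z)$ whenever $y=\Im(z)\gg T$.

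I would then decompose $\Lambda_{\ev}$ into the constant function (contributing $O(1)$), the even Hecke-Maass cusp forms, and the even Eisenstein series, expecting the Eisenstein part to supply the main term $O(Ty)$ and the other two to be negligible.

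For the Eisenstein spectrum, the approach is to use the Fourier expansion
$$E(z,1/2+it)=y^{1/2+it}+\varphi(1/2+it)y^{1/2-it}+\sqrt y\sum_{n\neq 0}c_n(t)K_{it}(2\pi|n|y)e(nx),$$
with $|\varphi(1/2+it)|=1$ and $c_n(t)$ a divisor-sum of size $n^\epsilon$. The square modulus of the zero-frequency part is $O(y)$ uniformly in $t$, so integrating over $|t|\ll T$ produces the desired $O(Ty)$ after expanding the square (the cross term oscillates but is bounded trivially by $y$). The non-zero frequencies lie in the deep-decay regime of the $K$-Bessel function: since $y\gg T\geq|t|$, the classical asymptotic gives $K_{it}(2\pi ny)\ll e^{-ny}$ uniformly, so that tail contributes $O_A(y^{-A})$ after summing in $n$ and integrating in $t$.

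For cusp forms, the same Bessel decay applies termwise to the Fourier expansion $\phi(z)=\sum_{n\neq 0}\rho_\phi(n)\sqrt y\,K_{it_\phi}(2\pi|n|y)e(nx)$. Combined with standard second-moment bounds on $\rho_\phi(n)$ (via the large sieve, for instance) and Weyl's law $\#\{|t_\phi|\ll T\}\ll T^2$, the total cuspidal contribution is $O_A(y^{-A})$. The only delicate input is the uniformity in $t$ of the classical Bessel asymptotic in the regime $y\gg|t|$; everything else is routine bookkeeping, and summing the three pieces yields $T\Im(z)$ as claimed.
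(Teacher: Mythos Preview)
Your proposal is correct and begins with the same Cauchy--Schwarz reduction as the paper. The divergence is in how you establish the pointwise bound $\int_{|t_\phi|\ll T}|\phi(z)|^2\,d\phi\ll T\Im(z)$: the paper simply invokes Proposition~15.8 of Iwaniec--Kowalski, a pre-trace/local Weyl law estimate giving the general bound $T^2+T\Im(z)$ for any $z$ in the fundamental domain, and then uses $\Im(z)\gg T$ to drop the $T^2$. You instead unpack the spectrum directly, using the hypothesis $y\gg T$ from the outset to place all non-constant Fourier modes in the exponential-decay regime of $K_{it}$, so that only the Eisenstein constant term survives. Your route is more hands-on and makes transparent where the $Ty$ comes from; the paper's route is a one-line citation but relies on heavier machinery (the pre-trace formula) that also covers the complementary regime $y\ll T$, which is not needed here.
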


\begin{proof}
We apply the Cauchy-Schwarz inequality. We get
\begin{align*}
\int_{\substack{\Lambda_{\ev}\\|t_\phi|\ll T}}|\phi(z_1)\phi(z_2)|d\phi&\ll\int_{\substack{\Lambda\\|t_\phi|\ll T}}|\phi(z_1)\phi(z_2)|d\phi\\
	&\ll\left(\int_{\substack{\Lambda\\|t_\phi|\ll T}}|\phi(z_1)|^2d\phi\right)^{1/2}\left(\int_{\substack{\Lambda\\|t_\phi|\ll T}}|\phi(z_2)|^2d\phi\right)^{1/2}.\\
\intertext{We bound the two terms with Proposition 15.8 in \cite{IwaniecKowalski}. The hypothesis gives}
	&\ll\left(T^2+T\Im(z_1)\right)^{1/2}\left(T^2+T\Im(z_2)\right)^{1/2}\\
	&\ll T\sqrt{\Im(z_1)\Im(z_2)}.
\end{align*}
\end{proof}

\subsection{Stationary phase}
We state in this section two lemma from \cite{BKY2013} about estimates on oscillating integral. Let $w$ be a smooth function with support on $[\alpha,\beta]$ and $h$ be a smooth functions on $[\alpha,\beta]$. We want to bound the integral
$$I=\int_{-\infty}^\infty w(t)e^{ih(t)}dt.$$
This depends on the vanishing of $h'$ in the interval $[\alpha,\beta]$.

\begin{lemma}[\cite{BKY2013}, Lemma 8.1]\label{lemBKY8.1}
Let $Y\geq1$, $X,U,R,Q>0$. Suppose that
\begin{align*}
w^{(j)}(t)&\ll_jXU^{-j},&\text{for }j=1,2,\dots\\
|h'(t)|&\geq R,\\
h^{(j)}(t)&\ll_jYQ^{-j},&\text{for }j=2,3,\dots
\end{align*}
Then
$$I\ll_A(\beta-\alpha)X[(QR/\sqrt y)^{-A}+(RU)^{-A}].$$
\end{lemma}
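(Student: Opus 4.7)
The plan is the standard non-stationary phase argument via repeated integration by parts. Since $|h'(t)|\geq R>0$ throughout the support of $w$, I would use the identity $e^{ih(t)}=(ih'(t))^{-1}(d/dt)e^{ih(t)}$ to transfer smoothness from the oscillatory factor onto $w$ and $1/h'$. Assuming (as is standard in applications) that $w$ vanishes to all orders at the endpoints of $[\alpha,\beta]$, so that the boundary terms disappear, $A$ iterations produce
$$I=(-1)^{A}\int_{\alpha}^{\beta}D^{A}[w](t)\,e^{ih(t)}\,dt,\qquad D[g](t):=\frac{d}{dt}\!\left(\frac{g(t)}{ih'(t)}\right),$$
and the proof reduces to a pointwise bound on $D^{A}[w]$.

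By induction on $A$, $D^{A}[w]$ is a finite combinatorial sum (with coefficients depending only on $A$) of terms of the shape
$$\frac{w^{(a_{0})}(t)\prod_{\nu=1}^{r}h^{(b_\nu)}(t)}{h'(t)^{A+r}},\qquad a_{0}\geq 0,\ b_\nu\geq 2,\ a_{0}+\sum_{\nu=1}^{r}(b_\nu-1)=A.$$
Every $b_\nu\geq 2$, so the hypotheses give the pointwise estimate $X\,U^{-a_{0}}Y^{r}Q^{-\sum b_\nu}R^{-A-r}$ on each such term. The $r=0$ term is exactly $X(UR)^{-A}$. For $r\geq 1$ and (in the extremal case) all $b_\nu=2$, the bound becomes $X\,U^{-(A-r)}Y^{r}Q^{-2r}R^{-(A+r)}$; a short dichotomy on whether $UY\leq Q^{2}R$ or not absorbs this into $X\,[(UR)^{-A}+(\sqrt{Y}/(QR))^{A}]$. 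Higher $b_\nu\geq 3$ only improve matters, since then $\sum b_\nu>2r$ while $r$ drops for the same $A$. Summing the finitely many partitions yields $|D^{A}[w](t)|\ll_{A}X[(UR)^{-A}+(QR/\sqrt{Y})^{-A}]$, and integrating this pointwise bound over the interval gives the stated estimate, with $\beta-\alpha$ as a trivial measure factor.

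The hard part will be the combinatorial bookkeeping in the second step, and in particular the observation that two factors $h''/h'^{2}$ appearing in $D^{2}[w]$ cost precisely $(\sqrt{Y}/(QR))^{2}$ — each effectively ``shares'' one unit of $1/Q$ and one unit of $1/R$ alongside a square root of the available $Y$-budget. This accounting is what forces the $\sqrt{Y}$ (rather than $Y$) in the final bound; writing it out cleanly for general $b_\nu\geq 2$, and checking that all coefficients depend only on $A$, is the only nontrivial technical content. The rest is routine.
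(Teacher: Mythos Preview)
The paper does not give its own proof of this lemma; it is quoted verbatim from \cite{BKY2013} (Lemma~8.1 there) and used as a black box. Your sketch is the standard non-stationary phase argument and is essentially how the result is proved in the original reference: iterate the operator $D[g]=(g/(ih'))'$, track the combinatorial shape of $D^A[w]$, and bound each resulting monomial using the hypotheses.

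Two small points. First, your bound on the $r\geq 1$ terms with $a_0=0$ uses $|w|\ll X$, i.e.\ the $j=0$ case of the derivative hypothesis; the paper's statement starts at $j=1$, but this is a transcription slip---the original lemma in \cite{BKY2013} includes $j=0$, and without it even the trivial bound $|I|\leq(\beta-\alpha)X$ is unavailable. Second, your dichotomy yields $X\,P_1^{A-r}P_2^{2r}$ with $P_1=(UR)^{-1}$, $P_2=\sqrt{Y}/(QR)$; in the case $P_2^2>P_1$ you obtain $P_2^{2A}$, which dominates $P_2^A$ only when $P_2\leq 1$. This is harmless: if $P_2>1$ (or $P_1>1$) the claimed bound already exceeds the trivial estimate $(\beta-\alpha)X$, so nothing needs to be proved. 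It is worth saying this explicitly. With those two remarks your argument is complete.
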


\begin{lemma}[\cite{BKY2013}, Proposition 8.2]\label{lemBKY8.2}
Let $0<\delta<1/10$, $X,U,Y,Q>0$, $Z=Q+X+Y+\beta-\alpha+1$ be such that
$$y\geq Z`{3\delta},\quad\beta-\alpha\geq U\geq\frac{QZ^{\delta/2}}{\sqrt Y}.$$
Suppose that
\begin{align*}
w^{(j)}(t)&\ll_j XU^{-j},&\text{for }j=0,1,\dots\\
h''(t)&\gg YQ^{-2},\\
h^{(j)}(t)&\ll_jYQ^{-j},&\text{for }j=1,2,\dots
\end{align*}
and that there exists a unique $t_0\in[\alpha,\beta]$ such that $h'(t_0)=0$. Then
$$I\ll\frac{QX}{\sqrt Y}.$$
\end{lemma}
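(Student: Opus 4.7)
The plan is to prove this by the classical stationary-phase-with-localization strategy: split the integral into a piece near the unique critical point $t_0$, where stationary phase produces the main contribution of size $QX/\sqrt{Y}$, and a piece away from $t_0$, where Lemma \ref{lemBKY8.1} (non-stationary phase) renders the contribution negligible. The natural scale separating the two regions is $\eta := QZ^{\delta/2}/\sqrt{Y}$, which by hypothesis satisfies $\eta \leq U \leq \beta-\alpha$, so the localization will be compatible with the given bounds on $w$.

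First I would introduce a smooth cutoff $\chi$ with $\chi\equiv 1$ on $[t_0-\eta,t_0+\eta]$, $\chi\equiv 0$ outside $[t_0-2\eta,t_0+2\eta]$, and $\chi^{(j)}\ll \eta^{-j}$, and decompose $I=I_1+I_2$ where $I_1$ is the integral against $w\chi$ and $I_2$ against $w(1-\chi)$. For $I_2$, on the support of $1-\chi$ we have $|t-t_0|\ge \eta$, so $h'(t_0)=0$ combined with $h''\gg Y/Q^2$ gives $|h'(t)|\gg \eta Y/Q^2=:R$. Since $\eta\le U$, the amplitude $w(1-\chi)$ still satisfies $(w(1-\chi))^{(j)}\ll X\eta^{-j}$, so Lemma \ref{lemBKY8.1} applied on each connected component of $\operatorname{supp}(1-\chi)\cap[\alpha,\beta]$ yields
$$
I_2 \ll (\beta-\alpha)\,X\bigl[(QR/\sqrt{Y})^{-A}+(R\eta)^{-A}\bigr],
$$
with $QR/\sqrt{Y}=\eta\sqrt{Y}/Q=Z^{\delta/2}$ and $R\eta=\eta^2Y/Q^2=Z^{\delta}$; taking $A$ sufficiently large (depending on $\delta$) makes $I_2$ negligible compared to $QX/\sqrt{Y}$.

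For $I_1$, I would perform the Morse change of variables $u=\sgn(t-t_0)\sqrt{h(t)-h(t_0)}$, which is a smooth diffeomorphism on $[t_0-2\eta,t_0+2\eta]$ because $|h''|\gg Y/Q^2$. In the new coordinate $h(t)-h(t_0)=u^2$ and $dt/du\asymp Q/\sqrt{Y}$, giving
$$
I_1=e^{ih(t_0)}\int_{\mathbb{R}}\widetilde{w}(u)\,e^{iu^2}\,du,\qquad \widetilde{w}(u):=w(t(u))\chi(t(u))\frac{dt}{du}.
$$
The chain rule and Fa\`a di Bruno's formula, applied to $w^{(j)}\ll XU^{-j}$ and $h^{(j)}\ll YQ^{-j}$, show that $\widetilde{w}$ is supported in $|u|\ll Z^{\delta/2}$ with $\|\widetilde{w}\|_\infty\ll XQ/\sqrt{Y}$ and $\widetilde{w}^{(j)}(u)\ll (XQ/\sqrt{Y})Z^{-j\delta/2}$. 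Since $\widehat{e^{iu^2}}(\xi)=c\,e^{-i\xi^2/4}$ is unimodular, Plancherel gives
$$
\left|\int_{\mathbb{R}}\widetilde{w}(u)\,e^{iu^2}\,du\right|\ll \|\widehat{\widetilde{w}}\|_1 \ll \|\widetilde{w}\|_\infty = XQ/\sqrt{Y},
$$
where the last estimate follows from $\widehat{\widetilde{w}}(\xi)\ll (XQ/\sqrt{Y})Z^{\delta/2}(1+Z^{\delta/2}|\xi|)^{-N}$.

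The main obstacle will be the bookkeeping in the change of variables: one must verify carefully that each $u$-derivative of $\widetilde{w}$ costs at most a factor $Z^{-\delta/2}$, which is exactly what the hypothesis $U\ge QZ^{\delta/2}/\sqrt{Y}$ (together with $Y\ge Z^{3\delta}$, which controls the cubic and higher terms in the Taylor expansion of $h$ on the scale $\eta$) is designed to ensure. Once this derivative control is in place, both the non-stationary tail estimate and the Fresnel-type bound for the main term are routine, and the proof is complete.
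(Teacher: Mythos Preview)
The paper does not give its own proof of this lemma: it is quoted verbatim from \cite{BKY2013}, Proposition~8.2, and used as a black box. Your sketch reproduces, in outline, the standard proof that appears there: localize to a window of radius $\eta=QZ^{\delta/2}/\sqrt{Y}$ around the unique stationary point, apply the non-stationary bound (Lemma~\ref{lemBKY8.1}) outside, and perform the Morse change of variables $u=\pm\sqrt{h(t)-h(t_0)}$ inside to reduce to a Fresnel integral with a smooth amplitude of controlled $L^1$ Fourier norm. The identification $QR/\sqrt{Y}=Z^{\delta/2}$ and $R\eta=Z^{\delta}$ in the tail, and the scale $|u|\ll Z^{\delta/2}$ with $dt/du\asymp Q/\sqrt{Y}$ in the core, are exactly the computations that make the argument work, and your identification of the derivative bookkeeping for $\widetilde w$ as the main technical point (guaranteed precisely by $U\geq QZ^{\delta/2}/\sqrt{Y}$ and $Y\geq Z^{3\delta}$) is accurate.

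Two minor remarks. First, the chain of inequalities $\|\widehat{\widetilde w}\|_1\ll\|\widetilde w\|_\infty$ as written is not a general fact; what you mean (and what your displayed decay bound for $\widehat{\widetilde w}$ correctly yields after integration) is that both sides are $\ll XQ/\sqrt{Y}$. Second, when invoking Lemma~\ref{lemBKY8.1} on the support of $1-\chi$, you should note that the amplitude $w(1-\chi)$ has oscillation scale $\eta$ rather than $U$, so the relevant ``$U$'' in that application is $\eta$; you use this implicitly when writing $R\eta$. With these cosmetic fixes your argument is complete and matches the source.
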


\section{Diagonal term}\label{section4}
In this section, we compute the diagonal term of the Kitaoka formula, where $T\sim Q$. We have in particular that $\det(T)=\det(Q)$ and $\epsilon(T)=\epsilon(Q)$. For this, we combine Equations (\ref{defNav}) and (\ref{equNormPreKitaoka}) with the Kitaoka formula. The equation for the diagonal term simplifies to
\begin{align*}
N_{\av}^{\rm diag}(K)=&\frac{12\pi^2}{\omega K^4}\sum_{k\in2\mathbb N}w\left(\frac kK\right)\sum_{T\in\mathcal P(\mathbb Z)/\PSL_2(\mathbb Z)}\frac{\#\Aut(T)}{\epsilon(T)^2\det(T)^{3/2}}\\
 &\cdot\int_{\Lambda_{\ev}}V(\det(T),\det(T),t_\phi,k)\left|\phi(z_T)\right|^2d\phi.
\end{align*}

We deal with this expression in the following steps. First, we analyze the spectral integral using the pre-trace formula. This requires a non-trivial argument and takes up most of this section. In particular, we need to count Heegner points close to the edge of the fundamental domain and take care of the restriction to the even spectrum. After that, the rest of the summations and estimations is handled. We finish by the computation of the average over $k$, which we do not take advantage of for this term.

We fix some notations for this section. Let $T$ be a reduced positive definite matrix of determinant $D$. It corresponds to a Heegner $z_T$ in the fundamental domain via Equation (\ref{equIsoImaginaryAxisDetails}). We use the notation
$$T=\left(\begin{matrix}\alpha&\beta\\ \beta&\delta\end{matrix}\right)\longleftrightarrow z_T=\frac{-\beta+i\sqrt D}\alpha.$$

\subsection{The pre-trace formula}
We apply the pre-trace formula (see for example \cite{IwaniecSpectralMethods}, Section 10.1). At first, we do not take the even spectrum into account. It gives
\begin{align*}
\int_\Lambda V(\det(T),\det(T),t_\phi,k)\left|\phi(z_T)\right|^2d\phi&=\sum_{\gamma\in\SL_2(\mathbb Z)}\kappa(u(z_T,\gamma z_T)).
\end{align*}

The function $\kappa$ is the Harish-Chandra inverse of $V$ and it only depends on the point pair invariant $u(z_1,z_2)=\frac{|z_1-z_2|^2}{4\Im(z_1)\Im(z_2)}$. We keep these notations in this chapter. Recall that $T$ is reduced, so $2|\beta|\leq\alpha\leq\delta$ and the decay properties of $V$ give $D\ll k^{2+\epsilon}$, up to a negligible error. In particular, $z$ is in the classical fundamental domain of $\Gamma\backslash\mathbb H$. For the edge of the domain, we pick the pieces with $\Re(z)\leq0$. The goal of the following subsections is to prove the following theorem

\begin{theorem}\label{thmPretraceEstimate}
For all $\epsilon>0$, we have
\begin{align}
\sum_{T\in\mathcal P(\mathbb Z)/\PSL_2(\mathbb Z)}\frac{\#\Aut(T)}{\epsilon(T)^2\det(T)^{3/2}}&\int_\Lambda V(\det(T),\det(T),\tau,k)\left|\phi(z_T)\right|^2d\phi\nonumber\\
 =&\sum_{T\in\mathcal P(\mathbb Z)/\PSL_2(\mathbb Z)}\frac{2\#\Aut(T)}{\epsilon(T)\det(T)^{3/2}}\kappa(0)+O_\epsilon(k^{2.5+\epsilon}),\label{equAsymptoteFullSpectrum}
\intertext{If we reduce to the even spectrum, we get}
\sum_{T\in\mathcal P(\mathbb Z)/\PSL_2(\mathbb Z)}\frac{\#\Aut(T)}{\epsilon(T)^2\det(T)^{3/2}}&\int_{\Lambda_{\ev}} V(\det(T),\det(T),\tau,k)\left|\phi(z_T)\right|^2d\phi\nonumber\\
 =&\sum_{T\in\mathcal P(\mathbb Z)/\PSL_2(\mathbb Z)}\left(\frac{\#\Aut(T)}{\epsilon(T)}\right)^2\frac1{2\det(T)^{3/2}}\kappa(0)+O_\epsilon(k^{2.5+\epsilon}),\nonumber
\end{align}
\end{theorem}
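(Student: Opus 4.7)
The plan is to apply the pre-trace formula directly to each inner spectral integral. Write $\kappa=\kappa_{D,k}$ for the Harish-Chandra inverse of $t\mapsto V(D,D,t,k)$ on the spherical spectrum. By the pre-trace formula \cite{IwaniecSpectralMethods},
\begin{equation*}
\int_{\Lambda} V(D,D,t_\phi,k)\,|\phi(z_T)|^2\,d\phi \;=\; \sum_{\gamma\in\SL_2(\mathbb Z)} \kappa\bigl(u(z_T,\gamma z_T)\bigr), \qquad u(z,w)=\frac{|z-w|^2}{4\Im z\,\Im w}.
\end{equation*}
The $\gamma$'s fixing $z_T$ form a subgroup of $\SL_2(\mathbb Z)$ of order $2\epsilon(T)$ (the lift of $\mathrm{Stab}_{\PSL_2(\mathbb Z)}(z_T)$), each contributing $\kappa(0)$. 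Together with the prefactor $\#\Aut(T)/(\epsilon(T)^2 D^{3/2})$ this reproduces the claimed main term $2\#\Aut(T)\kappa(0)/(\epsilon(T)D^{3/2})$ of \eqref{equAsymptoteFullSpectrum}.

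The off-diagonal contribution from $\gamma\notin\mathrm{Stab}_{\SL_2(\mathbb Z)}(z_T)$ is what has to be controlled. From (\ref{eqcutoff}) one derives that $\kappa$ is essentially supported on $u\ll k^{-1+\epsilon}$, with $\kappa(0)\asymp k^{3}$ and $\kappa(u)\ll k^{3}(1+ku)^{-A}$ for any $A$. I would dyadically split over $u(z_T,\gamma z_T)\asymp U$ with $U\in(0,k^{-1+\epsilon}]$, apply this bound, and count pairs $(T,\gamma)$ producing such $u$, recalling that $D\ll k^{2+\epsilon}$ by the decay of $V$ in its first two arguments.

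The counting step is the principal obstacle and fixes the power of $k$ in the error. Writing $\gamma=\bigl(\begin{smallmatrix}a&b\\c&d\end{smallmatrix}\bigr)$, a small value of $u(z_T,\gamma z_T)$ forces $z_T$ close to a fixed point of $\gamma$. In the bulk of the fundamental domain this is impossible for $\gamma\ne\pm I$, but near the arc $|z|=1$ additional elements of $\SL_2(\mathbb Z)$, starting with $\bigl(\begin{smallmatrix}0&-1\\1&0\end{smallmatrix}\bigr)$ and its conjugates, nearly fix $z_T$. I would estimate the number of reduced forms $T=\bigl(\begin{smallmatrix}\alpha&\beta\\\beta&\delta\end{smallmatrix}\bigr)$ of determinant $D$ with $z_T$ within a prescribed distance of the boundary of $\mathcal F$ by a lattice count on short arcs of the hyperbola $\alpha\delta-\beta^2=D$, and combine this with the $U$-decay of $\kappa$. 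Summing these counts against $D^{-3/2}$ up to $D\ll k^{2+\epsilon}$ should produce a $\sqrt k$-saving beyond the $k^3$ size of the main term, yielding the desired $O_\epsilon(k^{2.5+\epsilon})$.

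For the even-spectrum identity, the orthogonal projector onto $\Lambda_{\ev}$ is $\tfrac12(1+\sigma^*)$ with $\sigma(z)=-\bar z$, so
\begin{equation*}
\int_{\Lambda_{\ev}} V\,|\phi(z_T)|^2\,d\phi \;=\; \tfrac12\!\int_{\Lambda} V\,|\phi(z_T)|^2\,d\phi \;+\; \tfrac12\!\int_{\Lambda} V\,\phi(z_T)\overline{\phi(-\bar z_T)}\,d\phi.
\end{equation*}
The pre-trace formula applied to the second integral yields $\sum_{\gamma\in\SL_2(\mathbb Z)}\kappa(u(z_T,\gamma(-\bar z_T)))$, in which the new main-term contributions come from $\gamma$'s satisfying $\gamma(-\bar z_T)=z_T$. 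These are in bijection with the determinant-$(-1)$ elements of $\Aut(T)$ in $\GL_2(\mathbb Z)$, whose count equals $\#\Aut(T)-2\epsilon(T)$ by the enumeration in Appendix \ref{AppAutomorphisms}. Adding the two half-contributions gives the factor $\tfrac12(2\epsilon(T)+\#\Aut(T)-2\epsilon(T))=\tfrac12\#\Aut(T)$ per $T$, i.e.\ exactly $\#\Aut(T)^2/(2\epsilon(T)^2)$ in the stated main term. The off-diagonal analysis for the reflected sum is parallel to the first one, with $z_T$ replaced by $-\bar z_T$ in one argument, and yields the same error bound.
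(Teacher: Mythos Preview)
Your overall strategy coincides with the paper's: apply the pre-trace formula, isolate the stabiliser contribution as the main term, and show the remaining $\gamma$'s give $O(k^{2.5+\epsilon})$ via a decay bound on $\kappa$ together with a count of Heegner points whose $\SL_2(\mathbb Z)$-orbit returns close. Your derivation of the even-spectrum main term is in fact tidier than the paper's: rather than symmetrising into four pieces and appealing to the $\#\Aut(T)/\epsilon(T)\in\{2,4\}$ dichotomy, you use the bijection between $\{\gamma\in\SL_2(\mathbb Z):\gamma(-\bar z_T)=z_T\}$ and the determinant~$-1$ part of $\Aut(T)\subset\GL_2(\mathbb Z)$ directly, which immediately yields the factor $\tfrac12\#\Aut(T)$.

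There is, however, a real gap in your off-diagonal discussion. You assert that small $u(z_T,\gamma z_T)$ is only possible ``near the arc $|z|=1$'', but this is false: for $z_T$ high in the cusp (reduced $T$ with $\alpha$ small and $D$ large) the \emph{translations} $\gamma z_T=z_T+n$ give $u=n^2\alpha^2/(4D)$, which can be $\ll k^{-1+\epsilon}$ even though $z_T$ is far from the unit circle. The paper treats this case separately (Lemma~\ref{lemSmalluOrbit}) and it contributes at the same $k^{-1/2+\epsilon}$ level as the near-arc case; your ``lattice count on short arcs of the hyperbola'' does not obviously capture it. Likewise, for the reflected sum $\sum_\gamma\kappa(u(z_T,\gamma(-\bar z_T)))$ the off-diagonal analysis is not merely ``parallel'': new configurations arise from $z_T$ close to the line $\Re z=0$ (where $-\bar z_T\approx z_T$) and from $z_T$ close to $\Re z=-\tfrac12$ (where $-\bar z_T+1\approx z_T$), and these require their own counts. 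Finally, the bound $\kappa(u)\ll k^3(1+ku)^{-A}$ that you invoke is correct but not immediate from~\eqref{eqcutoff}; it needs the integration-by-parts argument of Lemma~\ref{lemDecayOfKappa} (shift the $\tau$-contour and exploit the $\tau$-derivative control on $V$), which you should at least signal.
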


\begin{remark}
We will see that the term with $\kappa(0)$ is of size $k^3\log(k)$. The two equations tells us that, on average over $T$, the terms on the spectral side with $u\neq0$ are of lower size.
\end{remark}

To get $\kappa$, we want to compute the Harish-Chandra inverse transform of the function $h(\tau) = V(\det(T), \det(T), \tau, k)$. We know that this function of $\tau$ is even, decays exponentially and is holomorphic in the strip $|\Im(\tau)|\leq 2$. Therefore it is suitable for the Harish-Chandra inversion. A first way to get $\kappa$ in terms of $h$ is given by (1.62') in \cite{IwaniecSpectralMethods}:
$$\kappa(u)=\frac1{4\pi}\int_{-\infty}^\infty\frac1\pi\int_0^\pi(2u+1+2\sqrt{u(u+1)}\cos(\theta))^{-\frac12-i\tau}d\theta\  h(\tau)\tau\tanh(\pi\tau)d\tau.$$

At $u=0$, we get
\begin{align}\label{equKappa0}
\kappa(0)&=\frac1{4\pi}\int_{-\infty}^\infty V(\det(T),\det(T),\tau,k)\tau\tanh(\pi\tau)d\tau.
\end{align}

For $u\ll1$, the $\theta$-integral is of size $\ll1$ because $2u+1-2\sqrt{u(u+1)}$ is bounded away from 0. Using the cut-off of $V$ given in Equation (\ref{eqcutoff}) and $\tau\tanh(\tau)=|\tau|+O(1)$, we get a trivial bound
\begin{align}\label{equBoundKappaTrivial}
\kappa(u)\ll_A k^2\left(1+\frac{\det(T)^2}{k^4}\right)^{-A}\int_{-\infty}^\infty\left(1+\frac{|\tau|^2}k\right)^{-A}d\tau\ll_A k^3\left(1+\frac{\det(T)^2}{k^4}\right)^{-A}.
\end{align}

We also need the following lemma.
\begin{lemma}[\cite{IwaniecSpectralMethods}, Lemma 2.11]\label{lemIwaniecNumberOfgammas}
Let $z\in\mathbb H$ with $\Im(z)\geq1/10$ and $X>0$. We have
\begin{align*}
\#\{\gamma\in\SL_2(\mathbb Z)\mid u(z,\gamma z)<X\}\ll&\sqrt{X(X+1)}\Im(z)+X+1,\\
\#\{\gamma\in\SL_2(\mathbb Z)\mid u(z,\gamma(-\bar z))<X\}\ll&\sqrt{X(X+1)}\Im(z)+X+1.
\end{align*}
\end{lemma}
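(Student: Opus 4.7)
The strategy is classical for counting lattice points in hyperbolic balls: translate the geometric condition into explicit inequalities on the matrix entries of $\gamma$, and count.

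Write $\gamma=\left(\begin{smallmatrix}a&b\\c&d\end{smallmatrix}\right)$ and $z=x+iy$. A direct computation yields
\[u(z,\gamma z)=\frac{|cz^2+(d-a)z-b|^2}{4y^2},\]
so the condition $u(z,\gamma z)<X$ is equivalent to $|cz^2+(d-a)z-b|^2\leq 4Xy^2$. Separating the imaginary and real parts of $cz^2+(d-a)z-b$, one extracts
\[|2cx+(d-a)|\leq 2\sqrt X,\qquad |c(x^2-y^2)+(d-a)x-b|\leq 2\sqrt X\,y.\]
Combining $\Im(\gamma z)=y/|cz+d|^2$ with $|z-\gamma z|^2\geq(\Im z-\Im\gamma z)^2$ and solving the resulting quadratic in $|cz+d|^2$, one obtains the key bound $|cz+d|^2\leq(\sqrt{X+1}+\sqrt X)^2$, whence $|c|y\leq\sqrt{X+1}+\sqrt X$ and $|cx+d|\leq\sqrt{X+1}+\sqrt X$. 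The hypothesis $\Im z\geq 1/10$ is exactly what is needed to guarantee that $c$ takes only $O(\sqrt{X+1}/y)$ values and that, for each such $c$, $d$ lies in an interval of length $O(\sqrt X+1)$.

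I would then split the count by $c$. When $c=0$, the unimodular condition forces $a=d=\pm1$, the first inequality is automatic, and the second becomes $|b|\leq 2\sqrt X\,y$, producing $O(\sqrt X\,\Im z+1)$ matrices. When $c\neq 0$, the relation $ad-bc=1$ forces $\gcd(c,d)=1$ and fixes $a$ modulo $|c|$; intersecting this residue class with the length-$4\sqrt X$ interval supplied by the first inequality gives $O(\sqrt X/|c|+1)$ values of $a$, with $b=(ad-1)/c$ then determined. Altogether,
\[\#\{\gamma:u(z,\gamma z)<X\}\ll\sqrt X\,\Im z+1+\sum_{1\leq|c|\leq(\sqrt{X+1}+\sqrt X)/\Im z}(\sqrt X+1)\bigl(\sqrt X/|c|+1\bigr).\]
The main obstacle I anticipate is reconciling this sum with the stated bound $\sqrt{X(X+1)}\Im z+X+1$ without a spurious logarithmic loss, since a naive $\sum 1/|c|\ll\log X$ introduces an extra $\log$. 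Overcoming this requires a sharper treatment — either distinguishing the small-$|c|$ and large-$|c|$ ranges and re-using the two-dimensional constraint $|cz+d|^2\leq O(X)$ rather than its coordinate projections, or parametrizing the contributing $\gamma$'s through their bottom row $(c,d)$ and appealing to the standard count of $\SL_2(\mathbb Z)$-matrices whose lower row lies in a planar region of area $\asymp X$.

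For the second bound, with target $-\bar z=-x+iy$, I would run the parallel calculation. Setting $w=-\bar z$ and expanding,
\[u(z,\gamma w)=\frac{\bigl(-c(x^2+y^2)+(a+d)x-b\bigr)^2+(d-a)^2y^2}{4y^2},\]
which yields the two conditions $|d-a|\leq 2\sqrt X$ and $|c(x^2+y^2)-(a+d)x+b|\leq 2\sqrt X\,y$. The bound on $|c|$ this time comes from $\Im\gamma(-\bar z)=y/|cz-d|^2$, giving $|cz-d|^2\leq(\sqrt{X+1}+\sqrt X)^2$; this plays the same structural role as before with $d$ replaced by $-d$. The remainder of the counting proceeds in parallel and produces the same bound.
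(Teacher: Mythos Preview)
The paper does not prove this lemma; it is quoted from Iwaniec's \emph{Spectral Methods of Automorphic Forms} (Lemma~2.11) without argument. Your setup is the standard one and the displayed identities for $u(z,\gamma z)$, the extraction of $|cz+d|^2\leq(\sqrt{X+1}+\sqrt X)^2$, and the parallel treatment of $-\bar z$ are all correct.

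The logarithmic loss you flag is genuine: the bound $O(\sqrt X/|c|+1)$ for the number of admissible $a$ per pair $(c,d)$, summed over $d$ in its interval and then over $c$, produces an $X\log X$ term that is not absorbed by $\sqrt{X(X+1)}\Im z+X+1$ when $\Im z$ is bounded. Your second suggested repair is the right one, but the key point needs to be made explicit. Writing $\gamma=\tau^n\gamma_0$ with $\tau=\left(\begin{smallmatrix}1&1\\0&1\end{smallmatrix}\right)$ and $\gamma_0$ determined by the bottom row $(c,d)$, the condition $u(z,\gamma_0 z+n)<X$ confines $n$ to an interval of length $O\bigl(\sqrt{X\Im z\,\Im\gamma_0 z}\bigr)=O\bigl(\sqrt X\,y/|cz+d|\bigr)$, which is sharper than $O(\sqrt X/|c|)$ because it uses the full size of $|cz+d|$ rather than only its imaginary part. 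Summing $|cz+d|^{-1}$ over lattice points in the ellipse $|cz+d|\leq R$ gives $O(R/y+\log R)$ (the integral $\int_{|w|\leq R}|w|^{-1}\,dA$ is $2\pi R$, with a logarithmic boundary error), so the total becomes
\[
\sqrt X\,y\Bigl(\frac{R}{y}+\log R\Bigr)+\frac{R^2}{y}+R\ll X+\sqrt X\,y\log X+\frac{X}{y}+\sqrt X.
\]
Since $y\geq 1/10$ and $\log X\ll\sqrt X$, every term is dominated by $\sqrt{X(X+1)}\,y+X+1$. The same device handles the $-\bar z$ case after your substitution.
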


\begin{remark}
Note that in our case, $\Im(z_T)=\frac{\sqrt D}\alpha\ll k^{1+\epsilon}$ up to a negligible error.
\end{remark}

\subsection{Decay of the Harish-Chandra inverse transform}\label{secDecayHCTransform}
We do not restrict to the even spectrum at first. In this section, we prove a strong decay bound for $\kappa(u)$ when $u$ is big enough.
\begin{lemma}\label{lemDecayOfKappa}
Let $\epsilon>0$, $A>0$, $T\in\mathcal P(\mathbb Z)$ with $\det(T)\ll k^{2+\epsilon}$ and $z_T\in\mathbb H$ the Heegner point corresponding to $T$ via Equation (\ref{equIsoImaginaryAxisDetails}). Then
$$\sum_{\substack{\gamma\in\SL_2(\mathbb Z)\\u(z_T,\gamma z_T)\geq k^{-1+\epsilon}}}|\kappa(u(z_T,\gamma z_T))|\ll_{A,\epsilon} k^{-A}.$$
\end{lemma}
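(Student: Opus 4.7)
The plan is to bound $|\kappa(u)|$ by combining two complementary estimates—super-polynomial decay in $k$ coming from smoothness of $h(\tau) := V(\det T,\det T,\tau,k)$, and polynomial decay in $u$ coming from the analyticity of $h$ in the strip $|\Im\tau|\le 2$—and then to apply Lemma~\ref{lemIwaniecNumberOfgammas} dyadically.

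With $u=\sinh^2(r/2)$ the Harish-Chandra inversion factors through a Fourier and an Abel transform,
$$\kappa(\sinh^2(r/2)) = -\frac{1}{\pi}\int_r^\infty\frac{g'(s)}{\sqrt{2\cosh s - 2\cosh r}}\,ds,\qquad g(s)=\frac{1}{2\pi}\int_{-\infty}^\infty h(\tau)\,e^{-is\tau}\,d\tau.$$
The bounds $h^{(j)}(\tau)\ll k^{2-j/2}(1+\tau^2/k)^{-M}$ from \eqref{eqcutoff} yield, after $j$-fold integration by parts in the integral defining $g$, $|g^{(m)}(s)|\ll_j k^{5/2+m-j/2}|s|^{-j}$; taking $j$ arbitrarily large gives $|g^{(m)}(s)|\ll_A k^{-A}$ for $|s|\ge k^{-1/2+\epsilon/2}$. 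Shifting the contour of $g$ to $\Im\tau=-\operatorname{sgn}(s)(2-\epsilon')$, permitted by Lemma~\ref{lemBoundGammaFactors} (the strip of holomorphicity of $h$), gives the complementary exponential bound $|g'(s)|\ll k^{5/2}e^{-(2-\epsilon')|s|}$ valid for every $s$.

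For $r\ge k^{-1/2+\epsilon/2}$ (equivalently $u\ge k^{-1+\epsilon}$) the entire integration range $s\in[r,\infty)$ sits in the IBP regime, so the first bound together with the elementary estimate $\int_r^\infty ds/\sqrt{2\cosh s - 2\cosh r}\ll(\sinh r)^{-1/2}\ll k^{1/4}$ produces the uniform bound $|\kappa(u)|\ll_A k^{-A}$. Inserting the second bound into the same Abel integral instead gives $|\kappa(u)|\ll k^{5/2}e^{-(5/2-\epsilon')r}\ll k^{5/2}u^{-5/4+\epsilon'}$ for $u\ge 1$. To finish, I would dyadically split the sum over $u_\gamma\in[U,2U]$ for $U\ge k^{-1+\epsilon}$. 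Since $\det T\ll k^{2+\epsilon}$ and $T$ is reduced, $\Im(z_T)\ll k^{1+\epsilon/2}$, so Lemma~\ref{lemIwaniecNumberOfgammas} yields the count $\ll Uk^{1+\epsilon}$ (for $U\ge 1$) and $\ll\sqrt{U}\,k^{1+\epsilon}+1$ (for $U<1$). Pick a threshold $U_\ast=k^{(4A+10)/5}$ at which the two bounds on $|\kappa|$ coincide: for $U\le U_\ast$ use $|\kappa|\ll k^{-A}$, contributing $\ll U_\ast k^{2+\epsilon-A}\ll k^{-A'}$; for $U>U_\ast$ use $|\kappa|\ll k^{5/2}U^{-5/4}$, giving a convergent geometric sum with total $\ll k^{7/2+\epsilon}\,U_\ast^{-1/4}\ll k^{-A'}$. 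Choosing $A$ large enough, the full sum is $\ll k^{-A'}$ for any $A'>0$.

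The main obstacle is extracting the polynomial $u$-decay of $\kappa(u)$: the bound $|\kappa(u)|\ll_A k^{-A}$ alone is insufficient, because the count in Lemma~\ref{lemIwaniecNumberOfgammas} grows linearly in $U$ and the tail over large $u_\gamma$ would otherwise be unbounded. The extra $u^{-5/4}$ factor comes from shifting the $\tau$-contour to the edge of the strip $|\Im\tau|\le 2$; thus Lemma~\ref{lemBoundGammaFactors} is used not only for polynomial $\tau$-decay but crucially for analyticity across the whole strip.
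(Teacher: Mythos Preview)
Your proof is correct and uses essentially the same ingredients as the paper: the Harish--Chandra inverse transform, a contour shift in $\tau$ to the edge of the strip $|\Im\tau|\le 2$ for polynomial $u$-decay, integration by parts for super-polynomial $k$-decay, and a dyadic application of Lemma~\ref{lemIwaniecNumberOfgammas}. The only organizational difference is that the paper performs the shift and the integration by parts \emph{simultaneously} on $q(v)$, obtaining a single bound $\kappa(u)\ll k^{3-j\epsilon/2}u^{-2}$ (for $u\gg1$) that already carries both decays and therefore avoids your threshold $U_\ast$; incidentally, your exponent $u^{-5/4}$ should be $u^{-5/2}$ since $e^{r}\asymp u$ for large $r$, but the weaker power you wrote already suffices for the dyadic sum to converge.
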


\begin{proof}
We apply the usual three steps to get the Harish-Chandra inverse transform (see (1.64) in \cite{IwaniecSpectralMethods}). This gives
\begin{align*}
g(r)&=\frac1{2\pi}\int_{-\infty}^\infty e^{ir\tau}V(\det(T),\det(T),\tau,k)d\tau,\\
q(v)&=\frac1{4\pi}\int_{-\infty}^\infty V(\det(T),\det(T),\tau,k)(\sqrt{v+1}+\sqrt v)^{2i\tau}d\tau,\\
\kappa(u)&=\frac1{4\pi^2i}\int_u^\infty\frac1{\sqrt{v-u}}\int_{-\infty}^\infty V(\det(T),\det(T),\tau,k)\frac{(\sqrt{v+1}+\sqrt v)^{2i\tau}}{\sqrt{v(v+1)}}\tau d\tau\,dv.
\end{align*}

We recall the decay property of $V$ with respect to $\tau$, as written in Equation (\ref{eqcutoff}):
$$\frac{d^j}{d\tau^j}V(\det(T),\det(T),\tau,k)\ll_{A,j}k^{2-j/2}\left(1+\frac{\det(T)^2}{k^4}\right)^{-A}\left(1+\frac{|\tau|^2}k\right)^{-A}.$$

Let $h(\tau)=V(\det(T),\det(T),\tau,k)$. We consider first $q(v)$.
Since $h$ is holomorphic in a strip, we can move the integration line to $\tau\mapsto\tau+2i$:
$$\int_{-\infty}^\infty h(\tau)\tau(\sqrt{v+1}+\sqrt{v})^{2i\tau}d\tau=(\sqrt{v+1}+\sqrt{v})^{-4}\int_{-\infty}^\infty h(\tau+2i)(\tau+2i)(\sqrt{v+1}+\sqrt{v})^{2i\tau}d\tau.$$

Integrating by parts, we get
\begin{align*}
4\pi q(v)&=(\sqrt{v+1}+\sqrt{v})^{-4}\int_{-\infty}^\infty h(\tau+2i)(\tau+2i)(\sqrt{v+1}+\sqrt{v})^{2i\tau}d\tau\\
	&=(\sqrt{v+1}+\sqrt{v})^{-4}(-2i\log(\sqrt{v+1}+\sqrt{v}))^{-1}\\
	&\quad\cdot\int_{-\infty}^\infty(h'(\tau+2i)(\tau+2i)+h(\tau+2i))(\sqrt{v+1}+\sqrt{v})^{2i\tau}d\tau\\
	&=(\sqrt{v+1}+\sqrt{v})^{-4}(-2i\log(\sqrt{v+1}+\sqrt{v}))^{-j}\\
	&\quad\cdot\int_{-\infty}^\infty (h^{(j)}(\tau+2i)(\tau+2i)+jh^{(j-1)}(\tau+2i))(\sqrt{v+1}+\sqrt{v})^{2i\tau}d\tau\\
	&\ll_{A,j}(\sqrt{v+1}+\sqrt v)^{-4}\left(\log(\sqrt{v+1}+\sqrt v)\sqrt k\right)^{-j}k^2(k+j\sqrt k)\left(1+\frac{\det(T)^2}{k^4}\right)^{-A}.
\end{align*}

In particular, we have a saving in $k$ if $\log(\sqrt{v+1}+\sqrt v)\gg k^{-1/2+\epsilon/2}$. Since $\log(\sqrt{v+1}+\sqrt v)=\sqrt v+O(v^{3/2})$ for small $v$, this happens if $v$ or $u$ is $\gg k^{-1+\epsilon}$. We obtain
$$q(v)\ll_{A,j}(\sqrt{v+1}+\sqrt v)^{-4}k^{3-j\epsilon/2}\left(1+\frac{\det(T)^2}{k^4}\right)^{-A}.$$

Then
$$\kappa(u)\ll_{A,j}k^{3-j\epsilon/2}\left(1+\frac{\det(T)^2}{k^4}\right)^{-A}\int_u^\infty\frac{dv}{\sqrt{v(v+1)(v-u)}(\sqrt{v+1}+\sqrt v)^4}.$$

We split the integral in the intervals $]u,u+1[$ and $[u+1,\infty[$. We get
\begin{align*}
\int_u^\infty\frac{dv}{\sqrt{v(v+1)(v-u)}(\sqrt{v+1}+\sqrt v)^4}\ll&\frac1{\sqrt{u(u+1)}(\sqrt{u+1}+\sqrt u)^4}\int_u^{u+1}\frac{dv}{\sqrt{v-u}}+\int_{u+1}^\infty\frac{dv}{v^3}\\
 =&\frac2{\sqrt{u(u+1)}(\sqrt{u+1}+\sqrt u)^4}+\frac1{2(u+1)^2}.
\end{align*}

If $u\gg1$, then we obtain
$$\frac2{\sqrt{u(u+1)}(\sqrt{u+1}+\sqrt u)^4}+\frac1{2(u+1)^2}\ll\frac1{u^2}.$$

If $u\ll1$, then we have $1+u\asymp 1$ and
$$\frac2{\sqrt{u(u+1)}(\sqrt{u+1}+\sqrt u)^4}+\frac1{2(u+1)^2}\ll\frac1{\sqrt u}+1.$$

In summary, for $u\gg k^{-1+\epsilon}$, we computed
\begin{align*}
\kappa(u)\ll&_{A,j}k^{3-j\epsilon/2}\left(1+\frac{\det(T)^2}{k^4}\right)^{-A}\frac1{u^2} &\text{if }u\gg1,\\
\kappa(u)\ll&_{A,j}k^{3-j\epsilon/2}\left(1+\frac{\det(T)^2}{k^4}\right)^{-A}	&\text{if }u\ll1.
\end{align*}

Applying Lemma \ref{lemIwaniecNumberOfgammas}, we sum over $\gamma$. For $k^{-1+\epsilon}\ll u_T\ll1$, we have
$$\sum_{\substack{\gamma\in\SL_2(\mathbb Z)\\k^{-1+\epsilon}\ll u(z_T,\gamma z_T)\ll1}}|\kappa(u(z_T,\gamma z_T))|\ll_{A,j}k^{4.5-j\epsilon/2}\left(1+\frac{\det(T)^2}{k^4}\right)^{-A}$$

So for $j$ big enough, we can cancel all the powers of $k$. For $u\gg1$, we split into dyadic intervals. We can begin the sum at say $1$. For $X\gg1$, we have $\sqrt{X(X+1)}\Im(z)+X+1\ll Xk^{1+\epsilon}$. We get
\begin{align*}
\sum_{\substack{\gamma\in\SL_2(\mathbb Z)\\u(z_T,\gamma z_T)\geq1}}|\kappa(u(z_T,\gamma z_T))|=&\sum_{n=0}^\infty\sum_{u\in[2^n,2^{n+1}[}\kappa(u)\\
 \ll&\sum_{n=0}^\infty2^nk^{1+\epsilon}\kappa(2^n)\\
 \ll&_{A,j}\sum_{n=0}^\infty k^{4+\epsilon-j\epsilon/2}\left(1+\frac{\det(T)^2}{k^4}\right)^{-A}2^{-n}\\
 \ll&_{A,j}k^{4+\epsilon-j\epsilon/2}\left(1+\frac{\det(T)^2}{k^4}\right)^{-A}.
\end{align*}

We take $j$ big enough to conclude the proof.
\end{proof}

Now, note that for a Heegner point $z=\frac{-\beta+i\sqrt D}\alpha$, $\gamma=\left(\begin{smallmatrix}a&b\\c&d\end{smallmatrix}\right)$ and $z\neq\gamma z$, we have
\begin{align*}
u(z,\gamma z)=&\frac{|z-\gamma z|^2}{4\Im(z)\Im(\gamma z)}=\frac{|z(cz+d)-(az+b)|^2}{4\Im(z)^2}\\
 =&\frac{(c\beta^2-cD-(d-a)\alpha\beta-b\alpha^2)^2+(-2c\beta\sqrt D+(d-a)\alpha\sqrt D)^2}{4\alpha^2 D}.
\end{align*}

In the last line, if the second square is non-zero, we get $u(z,\gamma z)\gg\frac{D}{\alpha^2 D}\gg\frac1{\alpha^2}$. If it is zero, then $2c\beta=(d-a)\alpha$. Moreover, the first square is non-zero, since $z\neq\gamma z$. The first square simplifies then to $(c\beta^2-cD-(d-a)\alpha\beta-b\alpha^2)^2=(-c(\beta^2+D)-b\alpha^2)^2=\alpha^2(-c\delta-b\alpha)^2$. Thus $u(z,\gamma z)\gg\frac{\alpha^2}{\alpha^2D}\gg\frac1D$ in that case. Up to a negligible error, we get that for $z_T\neq\gamma z_T$,
$$u(z_T,\gamma z_T)\gg\min\left(\frac1{\alpha^2},\frac1D\right)\gg k^{-2-\epsilon}.$$

It remains to deal with the $z_T$ and $\gamma$ such that $k^{-2-\epsilon}\ll u\ll k^{-1+\epsilon}$.

\subsection{Orbits of Heegner points}
In this section, we count the $z_T$ and $\gamma$ such that $u(z_T,\gamma z_T)\ll k^{-1+\epsilon}$. This gives the error term in Equation (\ref{equAsymptoteFullSpectrum}). For this, we analyze the distribution of Heegner points and their orbits. Recall that they lie in the classical fundamental domain for a corresponding reduced matrix. We obtain the following result:
\begin{lemma}\label{lemSmalluOrbit}
Let $z_T$ be the Heegner point corresponding to the matrix $T$. Then
$$\sum_{\substack{T\in\mathcal P(\mathbb Z)/\PSL_2(\mathbb Z)\\\det(T)\ll k^{2+\epsilon}}}\frac{\#\Aut(T)}{\epsilon(T)^2\det(T)^{3/2}}\sum_{\substack{\gamma\in\SL_2(\mathbb Z):\\k^{-2-\epsilon}\ll u(z_T,\gamma z_T)\ll k^{-1+\epsilon}}}|\kappa(u(z_T,\gamma z_T))|\ll_\epsilon k^{2.5+\epsilon}.$$
\end{lemma}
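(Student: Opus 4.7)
The plan is to combine the pointwise bound $|\kappa(u)|\ll k^3$ from \eqref{equBoundKappaTrivial}, valid throughout $u\ll 1$, with the lattice-point count of Lemma~\ref{lemIwaniecNumberOfgammas}, carefully absorbing the spurious $+1$ in the latter. Setting $X=k^{-1+\epsilon}$ and
\[
N(T):=\#\bigl\{\gamma\in\SL_2(\mathbb Z)\setminus\mathrm{Stab}(z_T):u(z_T,\gamma z_T)<X\bigr\}
\]
(the lower bound $u\gg k^{-2-\epsilon}$ in the statement is automatic from the preamble), Iwaniec's bound reads $N(T)\ll X^{1/2}\Im(z_T)+1$. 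The $X^{1/2}\Im(z_T)$ piece, together with $\Im(z_T)=\sqrt D/\alpha$ and the divisor estimate $\#\{\beta\!\pmod\alpha:\beta^2\equiv-D\}\ll D^\epsilon$, contributes
\[
k^3\sqrt X\sum_T\frac{\Im(z_T)}{D^{3/2}}\ll k^{2.5+\epsilon/2}\sum_T\frac{1}{\alpha D}\ll k^{2.5+\epsilon},
\]
which is within target. The whole difficulty is that naively summing the $+1$ yields $k^3\log K$.

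To absorb the $+1$ I would split at the threshold $\Im(z_T)=X^{-1/2}=k^{1/2-\epsilon/2}$. For $\Im(z_T)\ge X^{-1/2}$, $X^{1/2}\Im(z_T)\ge 1$ absorbs the $+1$ into the main term just treated. For $\Im(z_T)<X^{-1/2}$, $N(T)$ is a non-negative integer bounded by $O(1)$, and one must show $N(T)=0$ outside a controlled exceptional set. Using the preamble formula $u(z_T,\gamma z_T)=(A^2+B^2D)/(4\alpha^2D)$ with $A$ and $B$ as there, together with the classification of non-stabilizer elements of $\SL_2(\mathbb Z)$: the parabolic elements $\gamma=\pm T^n$ ($n\ne 0$) are ruled out by $u=n^2/(4\Im(z_T)^2)>X$ in this regime; hyperbolic $\gamma$ satisfy $u\ge\sinh^2(\ell/2)\gg 1$, where $\ell$ is bounded below by the length of the shortest closed geodesic on $\SL_2(\mathbb Z)\backslash\mathbb H$; and elliptic $\gamma$ produce small $u$ only when $z_T$ lies within hyperbolic distance $\sqrt X$ of an $\SL_2(\mathbb Z)$-translate of one of the elliptic fixed points $i$ or $\rho=e^{i\pi/3}$.

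The residual exceptional contribution is enumerated directly. Near $i$, the inequalities $|\beta|\ll\alpha\sqrt X$ and $|\sqrt D-\alpha|\ll\alpha\sqrt X$ force in the reduced fundamental domain either $T=\alpha I$ (where $z_T=i$ is fixed by $S$, so this adds nothing to $N(T)$) or a family with $\alpha\ge X^{-1/2}$, $|\beta|\le\alpha\sqrt X$, $\delta\in[\alpha,\alpha+O(\alpha\sqrt X)]$. For each such $T$, $N(T)=2$ from $\pm S$, and the total is
\[
k^3\sum_{\alpha\gg X^{-1/2}}\sum_{|\beta|\ll\alpha\sqrt X}\sum_{\delta\in[\alpha,\alpha+O(\alpha\sqrt X)]}\frac{O(1)}{(\alpha\delta-\beta^2)^{3/2}}\ll k^3X\log K\ll k^{2+\epsilon}.
\]
An identical enumeration near $\rho$, using $\mathrm{Stab}(\rho)=\{\pm I,\pm ST,\pm(ST)^2\}$, gives the same order. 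Combined with the bulk contribution, this proves the lemma.

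The main obstacle is the Diophantine verification that for $\Im(z_T)<X^{-1/2}$ and $z_T$ away from the $\sqrt X$-neighborhoods of $i$ and $\rho$, the system $|A|\ll\alpha\sqrt{DX}$, $|B|\ll\alpha\sqrt X$, $ad-bc=1$ admits no non-stabilizer solution. This uses the integrality of $B$ (forcing $\alpha\gg X^{-1/2}$ via $u\gg\alpha^{-2}$ when $B\ne 0$), the integrality of $b\alpha+c\delta$ in the degenerate case $B=0$ (forcing $D\gg X^{-1}$ via $u\gg D^{-1}$), and the fact that $ad-bc=1$ is quadratic in $a$, leaving at most two admissible values of $a$ once the other entries are fixed.
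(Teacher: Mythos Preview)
Your approach is correct and takes a genuinely different route from the paper. The paper does not invoke Lemma~\ref{lemIwaniecNumberOfgammas} here at all; it argues geometrically by splitting at $\Im(z_T)=10$. For $\Im(z_T)>10$ only translations can produce small $u$, and these are counted via $n^2\ll D/(\alpha^2k^{1-\epsilon})$, summing over $D'=\alpha\delta$ with the divisor bound. For $\Im(z_T)\le 10$ the paper observes that the nearest orbit point to $z_T$ (other than $z_T$ itself) is $z_T/|z_T|$, so small $u$ forces $|z_T|-1\ll k^{(-1+\epsilon)/2}$, i.e.\ $\delta-\alpha\ll\alpha k^{(-1+\epsilon)/2}$; those $T$ are enumerated directly. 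Both cases give the same $k^{-1/2+\epsilon}$ bound. Your route---summing the Iwaniec main term $X^{1/2}\Im(z_T)$ globally via $\sum_T(\alpha D)^{-1}\ll k^\epsilon$ and then disposing of the residual $+1$ by conjugacy type---is conceptually cleaner and even yields a sharper elliptic contribution. Two small repairs: your parabolic step needs the threshold at $\tfrac12 X^{-1/2}$ rather than $X^{-1/2}$ (for $n=\pm1$ one only gets $u>X/4$ at your threshold; alternatively note that in the transition zone the $+1$ is already absorbed), and you should remark that the bound $u\ge n^2/(4\Im(z_T)^2)$ extends from $\pm T^n$ to every parabolic element because $z_T$ maximises $\Im$ in its $\SL_2(\mathbb Z)$-orbit. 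The final ``Diophantine verification'' paragraph is then superfluous: once hyperbolic and parabolic elements are excluded, the elliptic displacement formula already pins $z_T$ to the $\sqrt X$-neighbourhoods of $i$, $\rho$, $-\bar\rho$.
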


\begin{proof}
Since $u(z_T,\gamma z_T)$ is smaller than 1, we have that $\kappa(u(z,\gamma z))\ll k^3$ by Equation (\ref{equBoundKappaTrivial}). Combining this with $\frac{\#\Aut(T)}{\epsilon(T)^2}\ll1$, we see that we only have to show that
$$\sum_{\substack{T\in\mathcal P(\mathbb Z)/\PSL_2(\mathbb Z)\\\det(T)\ll k^{2+\epsilon}}}\frac1{\det(T)^{3/2}}\sum_{\substack{\gamma\in\SL_2(\mathbb Z):\\k^{-2-\epsilon}\ll u(z_T,\gamma z_T)\ll k^{-1+\epsilon}}}1\ll k^{-1/2+\epsilon}$$
to get a bound of size $O(k^{2.5+\epsilon})$ for Equation (\ref{equAsymptoteFullSpectrum}). Geometrically, it is clear that $z_T$ must be close to an edge of the fundamental domain if we want it to be close to another point in its orbit. We make this more precise. There are 2 types of Heegner points such that $u(z_T,\gamma z_T)$ is small. First, suppose $z_T$ has big imaginary part, say $\Im(z_T)>10$. For a translation $\gamma z=z_T+n$, $n\in\mathbb Z$, we have
$$u(z_T,z_T+n)=\frac{n^2}{4\Im(z_T)^2}=\frac{n^2\alpha^2}{4D}.$$

Therefore if $u(z_T,z_T+n)\ll k^{-1+\epsilon}$, we get $n^2\ll\frac D{\alpha^2k^{1-\epsilon}}$. To compute a bound, we sum over $D'=\alpha\delta\asymp D\ll k^{2+\epsilon}$. For a fixed $D'$, there are $\ll (D')^{\epsilon/4}$ choices for $\alpha$ and $\delta$ by the divisor bound and there are $\alpha$ choices for $\beta$, so that
\begin{align*}
\sum_{\substack{T\in\mathcal P(\mathbb Z)/\PSL_2(\mathbb Z)\\\det(T)\ll k^{2+\epsilon}}}\frac1{\det(T)^{3/2}}\sum_{\substack{\gamma\in\SL_2(\mathbb Z)\text{ translation}\\k^{-2-\epsilon}\ll u(z_T,\gamma z_T)\ll k^{-1+\epsilon}}}1\ll&\sum_{D'\ll k^{2+\epsilon}}(D')^{-3/2}\sum_{\alpha\delta=D'}\sum_{|\beta|\leq\alpha/2}\sum_{n^2\ll\frac{D'}{\alpha^2k^{1-\epsilon}}}1\\
 \ll&\sum_{D'\ll k^{2+\epsilon}}(D')^{-3/2+\epsilon/4}\sum_{\alpha\delta=D'}\alpha\sqrt{\frac{D'}{\alpha^2k^{1-\epsilon}}}\\
 \ll_\epsilon&k^{-1/2+\epsilon/2}\sum_{D'\ll k^{2+\epsilon}}(D')^{-1+\epsilon/4}\\
 \ll_\epsilon &k^{-1/2+\epsilon}
\end{align*}

If $\Im(z_T)>10$ and $\gamma$ is not a translation, then $|z_T|\asymp\Im(z_T)$ and $\Im(\gamma z_T)\leq1$. Therefore $|z_T-\gamma z_T|\gg\Im(z_T)-\Im(\gamma z_T)\asymp|z_T|$ and
$$u(z_T,\gamma z_T)=\frac{|z_T-\gamma z_T|^2}{4\Im(z_T)\Im(\gamma z_T)}\gg\frac{|z_T|^2}{\Im(z_T)}\gg|z_T|\gg1.$$

So there is no such $\gamma$ with $u(z_T,\gamma z_T)\ll k^{-1+\epsilon}$. Now, we analyze low-lying Heegner points where $\Im(z_T)\leq10$. Note that we have $|z_T|^2=\frac\delta\alpha\asymp1$. If $\gamma$ is a translation, then the computation above shows that $u(z,\gamma z)\gg1$. Since the Heegner points are in the fundamental domain and we ruled out the case where $\gamma$ is a translation, $\Im(\gamma z_T)\leq1$. Hence
$$u(z_T,\gamma z_T)=\frac{|z_T-\gamma z_T|^2}{4\Im(z_T)\Im(\gamma z_T)}\gg|z_T-\gamma z_T|^2.$$

Any point in the orbit of $z_T$ is at least as far as the point $z_T/|z_T|$ (in the Euclidean distance), as one can see by growing a circle around $z_T$. Suppose that $|z_T|-1\gg k^{(-1+\epsilon)/2}$, then we have
$$|z_T-z_T/|z_T||^2=\frac{|z_T|^2}{|z_T|^2}\left||z_T|-1\right|^2\gg k^{-1+\epsilon},$$
so this rules out this case. Now, if $|z_T|-1\ll k^{(-1+\epsilon)/2}$, that means
$$k^{(-1+\epsilon)/2}\gg|z_T|-1\asymp(|z_T|-1)(|z_T|+1)=|z_T|^2-1=\frac{\delta-\alpha}\alpha.$$

So that $\delta-\alpha\ll k^{(-1+\epsilon)/2}\alpha$. In particular, $\alpha\asymp\delta$ and $\delta^2\asymp\det(T)\ll k^{2+\epsilon}$. Clearly, for such a $z_T$, there is a finite number of $\gamma$ such that $u(z_T,\gamma z_T)\ll k^{-1+\epsilon}$ (at most 12 when $z_T$ is close to $\frac{\pm1+i\sqrt3}2$, for $k$ big enough). Counting such $T$ and $\gamma$ gives us
\begin{align*}
\sum_{\substack{T\in\mathcal P(\mathbb Z)/\PSL_2(\mathbb Z)\\\det(T)\ll k^{2+\epsilon}}}\frac1{\det(T)^{3/2}}\sum_{\substack{\text{such }\gamma\in\SL_2(\mathbb Z)\\k^{-2-\epsilon}\ll u(z_T,\gamma z_T)\ll k^{-1+\epsilon}}}1\ll&\sum_{\delta\ll k^{1+\epsilon}}\delta^{-3}\sum_{\substack{\alpha\\\delta-\alpha\ll k^{(-1+\epsilon)/2}\delta}}\sum_{|\beta|\leq\alpha/2}1\\
 \ll&\sum_{\delta\ll k^{1+\epsilon}}\delta^{-3}\sum_{\substack{\alpha\\\delta-\alpha\ll k^{(-1+\epsilon)/2}\delta}}\alpha\\
 \ll&\sum_{\delta\ll k^{1+\epsilon}}\delta^{-1}k^{(-1+\epsilon)/2}\\
 \ll&k^{-1/2+\epsilon}
\end{align*}
\end{proof}

This concludes the proof of Equation (\ref{equAsymptoteFullSpectrum}). We combine Lemma \ref{lemDecayOfKappa} and Lemma \ref{lemSmalluOrbit} and get the correct error term. For the term $u=0$, the set $\{\gamma\in\SL_2(\mathbb Z)\mid z_T=\gamma z_T\}$ has size $2\epsilon(T)$ since it is its lift from $\PSL_2(\mathbb Z)$.

\subsection{The even spectrum}
In this section, we prove the second equation of Theorem \ref{thmPretraceEstimate}. Let $T_{-1}$ be the $-1$ Hecke operator acting by $T_{-1}\phi(z)=\phi(-\bar z)$. We have
$$\left(\id+T_{-1}\right)\phi(z)=\begin{cases}2\phi(z)&\text{if $\phi$ is even,}\\0&\text{if $\phi$ is odd.}\end{cases}$$

This tells us that
\begin{align*}
\int_{\Lambda_{\ev}}V(\det(T)&,\det(T),t_\phi,k)|\phi(z_T)|^2d\phi\\
 =&\frac14\int_\Lambda V(\det(T),\det(T),t_\phi,k)\left(|\phi(z_T)|^2+|\phi(-\overline{z_T})|^2+2\Re(\phi(z_T)\overline{\phi(-\overline{z_T})})\right)d\phi.
\end{align*}

If $z$ is a Heegner point, then so is $-\overline{z_T}$. So we can consider Equation (\ref{equAsymptoteFullSpectrum}) when we repace $|\phi(z_T)|^2$ by $\phi(z)\overline{\phi(-\bar z)}$. We apply the trace formula again and we consider first the term with $u=0$.

The points $z_T$ and $-\overline{z_T}$ are both in the classical fundamental domain. Therefore if there exists $\gamma\in\SL_2(\mathbb Z)$ such that $\gamma(-\overline{z_T})=z$, that means that $z_T=-\overline{z_T}$ or that $z_T$ is on the edge of the fundamental domain. In all these cases, there is a $\gamma_0$ such that $\gamma_0(-\overline{z_T})=z_T$. This gives 3 possibilities: $\beta=0$ if $z_T=-\overline{z_T}$, and $\beta=-\frac12$ or $|z_T|=1$ otherwise. There $\gamma_0$ is respectively $id$, $\left(\begin{smallmatrix}1&-1\\0&1\end{smallmatrix}\right)$ and $\left(\begin{smallmatrix}0&1\\ -1&0\end{smallmatrix}\right)$. We can post-compose with any $\gamma$ such that $\gamma z_T=z_T$. Therefore the term $\phi(z_T)\phi(-\overline{z_T})$ has the same number of $\gamma$ with $u=0$ as the terms $|\phi(z_T)|^2$ and $|\phi(-\overline{z_T})|^2$. As above, the set $\{\gamma\in\SL_2(\mathbb Z)\mid z_T=\gamma z_T\}$ has size $2\epsilon(T)$. If there is such a $\gamma_0$, we get in total $\frac{8\epsilon(T)}4=2\epsilon(T)$ terms for $u=0$. If there is no $\gamma_0$ such that $\gamma_0(-\overline{z_T})=z_T$, then the term $\phi(z_T)\phi(-\overline{z_T})$ has no term with $u=0$ on the geometric side of the pre-trace. Therefore we only get $\epsilon(T)$. Looking at the table in Appendix \ref{AppEquMatrixAutomorphisms}, we see that the ratio between $\#\Aut(T)$ and $\epsilon(T)$ is 4 if $\gamma_0$ exists and 2 otherwise. Thus we can write this contribution as $\frac{\#\Aut(T)}2$. If we combine this with the factor $\frac{\#\Aut(T)}{\epsilon(T)^2}$ in Equation (\ref{equAsymptoteFullSpectrum}), we get in total
$$\frac12\left(\frac{\#\Aut(T)}{\epsilon(T)}\right)^2,$$
as in the second equation of Theorem \ref{thmPretraceEstimate}.

We consider now the case $u\neq0$. The only thing that matters in the error term of Equation (\ref{equAsymptoteFullSpectrum}) above is the distance $u(z_1,z_2)$ between the two points in the trace formula. If $u(z_T,\gamma(-\overline{z_T}))\gg k^{-1+\epsilon}$, then we conclude as in Lemma \ref{lemDecayOfKappa} using the decay of $\kappa$ and Lemma \ref{lemIwaniecNumberOfgammas}. The other case is when $u(z_T,\gamma(-\overline{z_T}))\ll k^{-1+\epsilon}$. We adapt the proof of Lemma \ref{lemSmalluOrbit}. As before, the Heegner points that are close to the boundary of the fundamental domain contribute. For translations, we have
$$u(z_T,-\overline{z_T}+n)=\frac{|2\Re(z_T)-n|^2}{4\Im(z_T)^2}=\frac{|-2\beta/\alpha-n|^2\alpha^2}{4D}.$$

Since $|\frac\beta\alpha|\leq\frac12$, this is similar to the case above. The only special case is if $n=1$ and $\frac\beta\alpha$ is really close to $-\frac12$. There we have
$$u(z_T,-\overline{z_T}+1)=\frac{(\alpha+2\beta)^2}D.$$

If it is not zero, the square must be at least 1 since $\alpha$ and $2\beta$ are integers. We get $\alpha-2|\beta|\ll\sqrt{Dk^{-1+\epsilon}}$. We compute the number of possible $z_T$ by fixing $D'=\alpha\delta$, getting $\alpha$ via the divisor bound and counting the possibilities for $\beta$. We get
\begin{align*}
\sum_{\substack{T\in\mathcal P(\mathbb Z)/\PSL_2(\mathbb Z)\\k^{-2-\epsilon}\ll u(z_T,-\overline{z_T}+1)\ll k^{-1+\epsilon}}}\frac1{\det(T)^{3/2}}\ll&\sum_{D'\ll k^{2+\epsilon}}(D')^{-3/2+\epsilon/4}\sqrt{D'k^{-1+\epsilon}}\\
 \ll&k^{-1/2+\epsilon/2}\sum_{D'\ll k^{2+\epsilon}}(D')^{-1+\epsilon/4}\ll k^{-1/2+\epsilon}.
\end{align*}

For a low-lying $z_T$, the argument above works since we only considered the distance between $z_T$ and $\frac{z_T}{|z_T|}$. The last case is $z_T$ being close to $-\overline{z_T}$. We have there
$u(z_T,-\overline{z_T})=\frac{\Re(z_T)^2}{\Im(z_T)^2}=\frac{\beta^2}D$, so $\beta^2\ll Dk^{-1+\epsilon}$. We count
\begin{align*}
\sum_{\substack{T\in\mathcal P(\mathbb Z)/\PSL_2(\mathbb Z)\\k^{-2-\epsilon}\ll u(z_T,-\overline{z_T})\ll k^{-1+\epsilon}}}\frac1{\det(T)^{3/2}}\ll&\sum_{D'\ll k^{2+\epsilon}}(D')^{-3/2+\epsilon/4}\sqrt{D'k^{-1+\epsilon}}\ll k^{-1/2+\epsilon}.
\end{align*}

We see that these cases go in the error term too. This concludes the proof of Theorem \ref{thmPretraceEstimate}.

\subsection{Main term of the pre-trace formula}
We now analyze the term with $u=0$ of the pre-trace formula. Applying Theorem \ref{thmPretraceEstimate}, we have
$$N_{\av}^{\rm diag}(K)=\frac{12\pi^2}{\omega K^4}\sum_{k\in2\mathbb N}w\left(\frac kK\right)\sum_{T\in\mathcal P(\mathbb Z)/\PSL_2(\mathbb Z)}\left(\frac{\#\Aut(T)}{\epsilon(T)}\right)^2\frac1{2\det(T)^{3/2}}\kappa(0)+O(K^{-1/2+\epsilon}),$$
with $\kappa(0)$ given in Equation (\ref{equKappa0}). The error term is the combination of Theorem \ref{thmPretraceEstimate} and trivial estimates. In Appendix \ref{AppAutomorphisms}, we calculate all the automorphisms of $T$ in $\GL_2(\mathbb Z)$. At the end of it, a table summarizes the computation. We see that $\frac12(\frac{\#\Aut(T)}{\epsilon(T)})^2$ is 2 except if $T=\left(\begin{smallmatrix}\alpha&\beta\\\beta&\delta\end{smallmatrix}\right)$ is diagonal, $\alpha=\delta$ or $\alpha=2|\beta|$. In these cases the ratio is equal to 8. Recall the definition of $V$ in Equation (\ref{defVfunction}). For $T=Q$, the only part that depends on $T$ is $\det(T)^{-2v}$ for $\Re(v)>0$. We consider
\begin{align*}
\sum_{T\in\mathcal P(\mathbb Z)/\PSL_2(\mathbb Z)}&\left(\frac{\#\Aut(T)}{\epsilon(T)}\right)^2\frac1{2\det(T)^{3/2+2v}}\\
	&=2\sum_{T\in\mathcal P(\mathbb Z)/\PSL_2(\mathbb Z)}\frac1{\det(T)^{3/2+2v}}+6\sum_{\substack{T\in\mathcal P(\mathbb Z)/\PSL_2(\mathbb Z)\\\#\Aut(T)\neq2\epsilon(T)}}\frac1{\det(T)^{3/2+2v}}\\
	&=:2L(v)+6\tilde L(v).
\end{align*}

\begin{lemma}
The function $\tilde L(v)$ converges for $\Re(v)>-1/4$ and is bounded on vertical strips.
\end{lemma}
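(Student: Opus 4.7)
The plan is to split $\tilde L(v)$ according to the three exceptional symmetry classes identified in the automorphism table of Appendix \ref{AppAutomorphisms}: reduced matrices $T=\left(\begin{smallmatrix}\alpha&\beta\\\beta&\delta\end{smallmatrix}\right)$ with $\beta=0$ (diagonal), with $\alpha=\delta$, or with $\alpha=2|\beta|$. Since the classes overlap (e.g.\ $\alpha=\delta=2|\beta|$), inclusion--exclusion gives at most a constant number of subseries to control; each will be shown to define an absolutely convergent Dirichlet-type sum for $\Re(s)>1$ with $s=3/2+2v$, i.e.\ for $\Re(v)>-1/4$.

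For the diagonal piece, one has $\det(T)=\alpha\delta$ with $1\leq\alpha\leq\delta$, so the contribution is bounded by $\sum_{n\geq 1}d(n)n^{-(3/2+2v)}$, which converges absolutely in the advertised half-plane. For the piece $\alpha=\delta$, the reduction condition forces $0\leq\beta\leq\alpha/2$ and thus $\det(T)=\alpha^2-\beta^2\asymp\alpha^2$; summing first over the $O(\alpha)$ admissible $\beta$ leaves $\sum_{\alpha\geq 1}\alpha^{-(2+4v)}$, which again converges for $\Re(v)>-1/4$. For the piece $\alpha=2|\beta|$, one may write $T$ with $\alpha=2\beta$, $\beta\geq 1$, $\delta\geq 2\beta$, so that $\det(T)=2\beta\delta-\beta^2\asymp\beta\delta$; then
\[
\sum_{\beta\geq 1}\beta^{-(3/2+2v)}\sum_{\delta\geq 2\beta}\delta^{-(3/2+2v)}\ll\sum_{\beta\geq 1}\beta^{-(2+4v)},
\]
using $\Re(3/2+2v)>1$ to sum the inner series, and this converges for $\Re(v)>-1/4$.

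Boundedness on vertical strips is immediate once absolute convergence is in hand: each individual term has absolute value independent of $\Im(v)$, so $|\tilde L(v)|\leq\tilde L(\Re(v))$, and the right-hand side is finite and continuous on any half-plane $\Re(v)\geq -1/4+\eta$ with $\eta>0$.

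I do not anticipate a serious obstacle: the only point requiring care is that the three defining conditions are not disjoint, so one must either apply inclusion--exclusion cleanly, or (simpler) bound $\tilde L(v)$ by the sum of the three subseries above, accepting a harmless overcounting. The convergence margins in all three cases are the same, so no individual case is tight and the resulting bound is uniform on vertical strips.
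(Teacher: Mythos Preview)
Your proposal is correct and follows essentially the same approach as the paper: both split $\tilde L(v)$ into the three exceptional classes from the automorphism table, use the estimates $\det(T)\asymp\alpha\delta$, $\det(T)\asymp\alpha^2$, and $\det(T)\asymp\beta\delta$ in the respective cases, and reduce to sums converging for $\Re(3/2+2v)>1$. The paper likewise notes that the three cases overlap and simply bounds by the sum of the subseries rather than doing inclusion--exclusion, exactly as you suggest.
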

\begin{proof}
Let $\sigma=\Re(v)$. First, we consider the case of $T$ diagonal. We have
\begin{align*}
\sum_{\substack{T\in\mathcal P(\mathbb Z)/\PSL_2(\mathbb Z)\\T\text{ diagonal}}}\frac1{\det(T)^{3/2+2v}}=\sum_{0<\alpha\leq\delta}\frac1{(\alpha\delta)^{3/2+2v}}\ll\zeta(3/2+2\sigma)^2.
\end{align*}

Therefore this sum converges for all $\sigma>-1/4$ and is bounded on vertical strips. The two other cases are similar:
\begin{align*}
\sum_{\substack{T\in\mathcal P(\mathbb Z)/\PSL_2(\mathbb Z)\\\alpha=\delta}}\frac1{\det(T)^{3/2+2v}}=&\sum_{\substack{0\leq2\beta\leq\delta\\0<\delta}}\frac1{(\delta^2-\beta^2)^{3/2+2v}}\ll\sum_{0<\delta}\frac1{\delta^{2+4\sigma}}\ll\zeta(2+4\sigma),\\
\sum_{\substack{T\in\mathcal P(\mathbb Z)/\PSL_2(\mathbb Z)\\\alpha=2|\beta|}}\frac1{\det(T)^{3/2+2v}}=&\sum_{0<2\beta\leq\delta}\frac1{(2\beta\delta-\beta^2)^{3/2+2v}}\ll\sum_{0<\delta}\frac1{\delta^{3/2+2\sigma}}\sum_{0<\beta}\frac1{\beta^{3/2+2\sigma}}\ll\zeta(3/2+2\sigma)^2.
\end{align*}

Note that these three cases are not disjoint. This is important if one want to estimate the values of $\tilde L$ explicitly.
\end{proof}

To study $L(s)$, we need the following lemma.
\begin{lemma}[\cite{Blomer2019symplectic}, remark after Lemma 12]
Let
$$\tilde h(D):=\#\{T\in\mathcal P(\mathbb Z)/\PSL_2(\mathbb Z)\mid\det(T)=D\}$$
be the class number of the determinant $D$ (corresponding to the discriminant $-4D$)
We have
$$\sum_{D\leq X}\tilde h(D)=\frac{4\pi}9X^{3/2}-X+O(X^{3/4}).$$
\end{lemma}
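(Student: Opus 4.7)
The plan is to realize $\sum_{D\leq X}\tilde h(D)$ as a lattice-point count in a three-dimensional region bounded in part by a quadric, extract the volume as the main term, and exploit the curvature of the boundary to obtain the power-saving error.

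Using the notation of Section \ref{secNotations}, I write a reduced $T=\left(\begin{smallmatrix}\alpha&\beta\\\beta&\delta\end{smallmatrix}\right)$ with $\alpha,\delta\in\mathbb Z_{>0}$ and $2\beta\in\mathbb Z$; setting $b=2\beta$, the weakly reduced condition reads $|b|\leq\alpha\leq\delta$ and the determinant equals $4D=4\alpha\delta-b^2$. The difference between reduced and weakly reduced lies on the codimension-one locus $\alpha=\delta$ or $|b|=\alpha$ (where the $\GL_2(\mathbb Z)$-class of $T$ is represented twice by weakly reduced forms), and a direct count shows this difference contributes $O(X)$. This reduces the problem to the lattice-point count
$$\sum_{D\leq X}\tilde h(D)=\#\left\{(\alpha,b,\delta)\in\mathbb Z_{>0}\times\mathbb Z\times\mathbb Z_{>0}:|b|\leq\alpha\leq\delta,\ 4\alpha\delta-b^2\leq 4X\right\}+O(X).$$

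For the volume of the corresponding real region $R_X$, I use the measure-preserving isomorphism (\ref{equIsoImaginaryAxis}). In the $(z_T,D)$ coordinates one has $d\alpha\,d\beta\,d\delta=\sqrt D\,(dx\,dy/y^2)\,dD$; since $db=2\,d\beta$ and the fundamental domain $\mathcal F$ of $\SL_2(\mathbb Z)\backslash\mathbb H$ has hyperbolic area $\pi/3$,
$$\mathrm{Vol}(R_X)=2\int_0^X\sqrt D\,dD\cdot\int_{\mathcal F}\frac{dx\,dy}{y^2}=\frac{4}{3}X^{3/2}\cdot\frac{\pi}{3}=\frac{4\pi}{9}X^{3/2}.$$
To isolate the secondary term $-X$, I fibre over $\alpha$. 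For fixed $\alpha$, the inner count of $(b,\delta)$ is
$$N_\alpha=\sum_{\substack{|b|\leq\alpha\\4\alpha^2-b^2\leq 4X}}\left(\left\lfloor\frac{4X+b^2}{4\alpha}\right\rfloor-\alpha+1\right),$$
and writing $\lfloor\cdot\rfloor=(\cdot)-\{\cdot\}$, the polynomial part summed by Euler--Maclaurin in $b$ and then in $\alpha$ reproduces the volume above, while the $-\alpha+1$ correction coming from the lower bound $\delta=\alpha$ contributes a total of $-X+O(X^{1/2})$ upon summation.

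The main obstacle is the sharp error $O(X^{3/4})$. Trivially, the sum of fractional parts
$$E(X)=\sum_{\alpha,b}\left\{\frac{4X+b^2}{4\alpha}\right\}$$
is only $O(X)$. To do better, I would apply Poisson summation in $b$ for each fixed $\alpha$, producing an exponential sum with phase quadratic in $b$. Because the quadric $4\alpha\delta-b^2=4X$ has non-vanishing Gaussian curvature, van der Corput's second-derivative test (or the stationary-phase estimate of Lemma \ref{lemBKY8.2} applied to the dual sum) yields a power saving over the trivial bound. Summing the resulting estimate over $\alpha\ll\sqrt X$ is designed to give $E(X)\ll X^{3/4}$. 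This is the classical template for counting lattice points inside smooth convex bodies of non-degenerate curvature, and an alternative is to quote the corresponding result in the literature directly; the delicate point in either case is verifying that the non-smooth boundary pieces $a=\delta$ and $|b|=\alpha$ do not spoil the curvature argument, which is handled by splitting $R_X$ along these faces and treating the corners separately with trivial bounds of the correct size.
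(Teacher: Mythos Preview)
The paper does not prove this lemma; it is quoted from \cite{Blomer2019symplectic} with only a remark adjusting the normalization. So there is no argument in the paper to compare against, and your sketch stands or falls on its own.

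Your overall strategy---reduce to a lattice-point count in the wedge $|b|\le\alpha\le\delta$ under the quadric $4\alpha\delta-b^2\le 4X$, extract the volume, and treat the fractional parts by exploiting curvature---is the classical route to this kind of asymptotic and does lead to the exponent $3/4$ in the error. The genuine gap is in your bookkeeping of the secondary term $-X$.

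First, you explicitly write $\sum_{D\le X}\tilde h(D)=\#\{\cdots\}+O(X)$ when passing from reduced to weakly reduced forms. An unspecified $O(X)$ at this stage already destroys any hope of recovering the exact coefficient of $X$; to prove the lemma as stated you must evaluate that boundary correction to precision $O(X^{3/4})$, not merely bound it. Second, the claim that ``the $-\alpha+1$ correction \dots\ contributes a total of $-X+O(X^{1/2})$'' is not correct. The $-\alpha$ is the lower limit of the $\delta$-integral and is part of the volume: summing $-\alpha$ over the admissible $(\alpha,b)$ contributes at order $X^{3/2}$, not $X$. The remaining $+1$ counts the admissible pairs $(\alpha,b)$, and the area of that planar region is $X\log 3$, so even its leading contribution is $(\log 3)X$, not $-X$. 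The true $-X$ emerges only after combining several effects---the Euler--Maclaurin boundary terms in the $b$- and $\alpha$-sums, the $+1$ term, and the precise reduced/weakly-reduced overcount---none of which you have tracked individually. Your curvature argument for $E(X)\ll X^{3/4}$ is fine as a sketch, but it is the order-$X$ accounting, not the error term, that is currently broken.
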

\begin{remark}
Note that in \cite{Blomer2019symplectic}, we have $\tilde h(D)=h(-4D)$. Hence $X$ must be replaced by $4X$ between the result there and here.
\end{remark}

\begin{lemma}
The function $L(v)$ converges for $\Re(v)>0$ and can be meromorphically extended to $\Re(v)>-1/4$ with a unique pole at $v=0$ of residue $\frac\pi3$. The extension is bounded on vertical strips and away from the pole.
\end{lemma}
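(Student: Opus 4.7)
The plan is to recognise $L(v)$ as a Dirichlet series in the determinant variable and to extract its analytic behaviour directly from the summatory asymptotic of the preceding lemma. Grouping matrices by their determinant gives
$$L(v) = \sum_{D \geq 1} \frac{\tilde h(D)}{D^{3/2+2v}},$$
and since differencing the asymptotic for $H(X) := \sum_{D \leq X}\tilde h(D)$ yields the trivial bound $\tilde h(D) \ll D^{1/2+\epsilon}$, this series converges absolutely for $\Re(v) > 0$.

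For the meromorphic continuation I would apply Abel summation against $H$, which in the region of absolute convergence produces the Mellin-type representation
$$L(v) = (3/2+2v)\int_1^\infty \frac{H(X)}{X^{5/2+2v}}\,dX;$$
the boundary term at infinity vanishes because $H(X) \ll X^{3/2}$. Now I would substitute the three-term expansion
$$H(X) = \frac{4\pi}{9}X^{3/2} - X + O(X^{3/4})$$
into this integral, which splits $L(v)$ as a sum of three pieces. The leading piece integrates to the explicit rational function $\tfrac{2\pi(3/2+2v)}{9v}$, meromorphic on all of $\mathbb C$ with a single simple pole at $v=0$. The secondary piece integrates to $-\tfrac{3/2+2v}{1/2+2v}$, holomorphic on the target half-plane $\Re(v) > -1/4$ (its sole pole $v = -1/4$ sits exactly on the boundary). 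The remainder piece is an integral which, by the $O(X^{3/4})$ bound on the error, converges absolutely on the strictly larger half-plane $\Re(v) > -3/8$ and therefore defines a holomorphic function on the whole target region.

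Assembling these three contributions yields the asserted meromorphic continuation to $\Re(v) > -1/4$ with a unique simple pole at $v = 0$, whose residue is read off the first piece:
$$\Res_{v=0} L(v) = \frac{2\pi}{9}\cdot\frac{3}{2} = \frac{\pi}{3}.$$
Boundedness on vertical strips, away from the pole, follows piece by piece: the two explicit rational expressions tend to finite constants as $|\Im(v)|\to\infty$, while the remainder integral is controlled uniformly by a constant depending only on $\Re(v)$ (using the majorant $\int_1^\infty X^{-7/4-2\Re(v)}\,dX$). No serious obstacle arises; the proof is essentially a mechanical translation of the arithmetic input provided by the class-number asymptotic into analytic data via Abel summation, and the only mild bookkeeping point is confirming that the truncated boundary term in the Abel summation step is indeed negligible, which is immediate from the bound $H(N)N^{-3/2-2v} = O(N^{-2\Re(v)})$ in the region $\Re(v)>0$.
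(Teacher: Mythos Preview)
Your proof is correct and follows essentially the same route as the paper: both arguments rewrite $L(v)$ as the Dirichlet series $\sum_D \tilde h(D)D^{-3/2-2v}$, apply Abel summation to produce the integral $(3/2+2v)\int_1^\infty H(X)X^{-5/2-2v}\,dX$, and then feed in the class-number asymptotic to isolate the pole and obtain the residue $\pi/3$. The only cosmetic difference is that the paper subtracts just the leading term $\tfrac{4\pi}{9}X^{3/2}$ from $H(X)$ (leaving a remainder of order $X$ whose integral converges exactly for $\Re(v)>-1/4$), whereas you also pull out the secondary term $-X$ explicitly; this buys you a slightly larger region of convergence for the remainder integral but is otherwise immaterial.
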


\begin{proof}
We have
$$L(s)=\sum_{T\in\mathcal P(\mathbb Z)/\PSL_2(\mathbb Z)}\frac1{\det(T)^{3/2+2v}}=\sum_{0<D}\frac{\tilde h(D)}{D^{3/2+2v}}.$$
This converges for $\Re(v)>0$. Let $X>0$. Summing the Dirichlet series by parts, we get
\begin{align}
\sum_{D\leq X}&\frac{\tilde h(D)}{D^{3/2+2v}}=\sum_{D\leq X}\tilde h(D)X^{-3/2-2v}+(3/2+2v)\int_1^X\sum_{D\leq t}\tilde h(D)\frac{dt}{t^{5/2+2v}}\nonumber\\
	=&\left(\sum_{D\leq X}\tilde h(D)X^{-3/2-2v}-\frac{4\pi}9X^{-2v}\right)+(3/2+2v)\int_1^X\left(\sum_{D\leq t}\tilde h(D)-\frac{4\pi}9t^{3/2}\right)\frac{dt}{t^{5/2+2v}}\nonumber\\
	&+\frac{4\pi}9X^{-2v}+(3/2+2v)\int_1^X\frac{4\pi}9t^{-1-2v}dt.
\end{align}

The last integral is
$$(3/2+2v)\int_1^X\frac{4\pi}9t^{-1-2v}dt=-(3/2+2v)\frac{4\pi}9\frac{X^{-2v}-1}{2v}.$$

For $\Re(v)>0$, the limit as $X\to\infty$ converges to $\frac{4\pi}9\frac{3/2+2v}{2v}$. Finally,
$$\lim_{X\to\infty}\left(\sum_{D\leq X}\tilde h(D)X^{-3/2-2v}-\frac{4\pi}9X^{-2v}\right)=\lim_{X\to\infty}(X^{-1/2-2v}+O(X^{-3/4-2v}))=0$$

In total, we have that
\begin{align*}
(3/2+2v)\int_1^\infty\left(\sum_{D\leq t}\tilde h(D)-\frac{4\pi}9t^{3/2}\right)\frac{dt}{t^{5/2+2v}}+\frac{4\pi}9\frac{3/2+2v}{2v}
\end{align*}
converges for $\Re(v)>-\frac14$ and $v\neq0$. It is an meromorphic continuation of $L(v)$ with a unique pole of residue $\Res_{v=0}L(v)=\frac\pi3$ and it is bounded on vertical strips and away from $v=0$.
\end{proof}

Now, we consider the $T$-sum combined with the $v$-integral of Equation (\ref{defVfunction}). 
\begin{align*}
\sum_{T\in\mathcal P(\mathbb Z)/\PSL_2(\mathbb Z)}&\left(\frac{\#\Aut(T)}{\epsilon(T)}\right)^2\frac1{2\det(T)^{3/2}}\\
	&\quad\cdot\frac1{2\pi i}\int_{(3)}e^{v^2}c_k^{-1}G(\tau,k,v+1/2+it)G(\tau,k,v+1/2-it)\det(T)^{-2v}\frac{dv}v\\
	&=\frac1{2\pi i}\int_{(3)}e^{v^2}c_k^{-1}G(\tau,k,v+1/2+it)G(\tau,k,v+1/2-it)(2L(v)+6\tilde L(v))\frac{dv}v.
\end{align*}
Note that the integrand has a double pole at $v=0$. We have the following Taylor expansion for the gamma factor:
\begin{align*}
c_k^{-1}G(\tau,k,v+1/2+it)&G(\tau,k,v+1/2-it)=c_k^{-1}G(\tau,k,1/2+it)G(\tau,k,1/2-it)\\
 &\left[1+v\left(\sum_{\pm\pm}\frac{\Gamma'}{\Gamma}\left(\frac{k-1/2}2\pm it\pm\frac{i\tau}2\right)-4\log(2\pi)\right)+O(v^2)\right].
\end{align*}

Recall that according to Equation (\ref{eqcutoff}), the $t$ and $\tau$-integral can be cut at $k^{1/2+\epsilon}$ up to a negligible error. Moreover, $\frac{\Gamma'}\Gamma(z)=\log(z)+O(|z|^{-1})$ so that for $t,\tau\ll k^{1/2+\epsilon}$,
\begin{align*}
\sum_{\pm\pm}\frac{\Gamma'}{\Gamma}\left(\frac{k-1/2}2\pm it\pm\frac{i\tau}2\right)-4\log(2\pi)&=\sum_{\pm\pm}\log((k-1/2)/2\pm it\pm i\tau/2)-4\log(2\pi)+O(k^{-1})\\
 &=4\log(k)+C_0+O(k^{-1/2+\epsilon}),
\end{align*}
for some constant $C_0\in\mathbb R$. Let $C_1=\lim_{v\to0}(L(v)-\frac\pi{3v})$ be the constant term of the Laurent series of $L(v)$. In conclusion, the pole at $v=0$ of the integrand has residue
$$c_k^{-1}G(\tau,k,1/2+it)G(\tau,k,1/2-it)\left(\frac{2\pi}3\left(4\log(k)+C_0+O(k^{-1/2+\epsilon})\right)+2C_1+6\tilde L(0)\right).$$
We define $D=\frac{2\pi}3C_0+2C_1+6\tilde L(0)$. We move the $v$-integral to $\Re(v)=-1/4+\epsilon$ for some fixed $\epsilon>0$:
\begin{align*}
\frac1{2\pi i}\int_{(3)}e^{v^2}&c_k^{-1}G(\tau,k,v+1/2+it)G(\tau,k,v+1/2-it)(2L(v)+6\tilde L(v))\frac{dv}v\\
 &=c_k^{-1}G(\tau,k,1/2+it)G(\tau,k,1/2-it)\left(\frac{8\pi}3\log(k)+D+O(k^{-1/2+\epsilon})\right)\\
 &\quad+\frac1{2\pi i}\int_{(-1/4+\epsilon)}e^{v^2}c_k^{-1}G(\tau,k,v+1/2+it)G(\tau,k,v+1/2-it)(2L(v)+6\tilde L(v))\frac{dv}v
\end{align*}

We apply the bounds of Lemma \ref{lemBoundGammaFactors} to the second term to get
\begin{align*}
\frac1{2\pi i}&\int_{(-1/4+\epsilon)}e^{v^2}c_k^{-1}G(\tau,k,v+1/2+it)G(\tau,k,v+1/2-it)(2L(v)+6\tilde L(v))\frac{dv}v\\
 \ll_A&k^{1/2+4\epsilon}\int_{-\infty}^\infty e^{-w^2}\left(1+\frac{t^2+|\tau|^2+w^2}k\right)^{-A}(|L(-1/4+\epsilon+iw)|+|\tilde L(-1/4+\epsilon+iw)|)dw\\
 \ll_A&k^{1/2+4\epsilon}\left(1+\frac{t^2+|\tau|^2}k\right)^{-A}.
\end{align*}

Using Lemma \ref{lemBoundGammaFactors} and $\tau\tanh(\pi\tau)=|\tau|+O(1)$, we get
\begin{align*}
\frac{12\pi}{\omega K^4}\sum_{k\in2\mathbb N}&w\left(\frac kK\right)\int_{-\infty}^\infty\int_{-\infty}^\infty\frac1{2\pi i}\int_{(-1/4+\epsilon)}e^{v^2}\\
 &\cdot c_k^{-1}G(\tau,k,v+1/2+it)G(\tau,k,v+1/2-it)(2L(v)+6\tilde L(v))\frac{dv}vdt\tau\tanh(\pi\tau)d\tau\\
 \ll_A&K^{-3}\int_{-\infty}^\infty\int_{-\infty}^\infty K^{1/2+4\epsilon}\left(1+\frac{t^2+|\tau|^2}k\right)^{-A}dt\tau\tanh(\pi\tau)d\tau\\
 \ll&K^{-3}\cdot K^{1/2+4\epsilon}\cdot K^{1/2}\cdot K\ll K^{-1+4\epsilon}.
\end{align*}

Therefore we conclude that
\begin{align*}
N_{\av}^{\rm diag}(K)=&\frac{12\pi^2}{\omega K^4}\sum_{k\in2\mathbb N}w\left(\frac kK\right)\frac1{4\pi}\int_{-\infty}^\infty\int_{-\infty}^\infty c_k^{-1}G(\tau,k,1/2+it)G(\tau,k,1/2-it)\\
	&\quad\cdot\left(\frac{8\pi}3\log(k)+D+O(k^{-1/2+\epsilon})\right)dt\tau\tanh(\tau)d\tau+O(K^{-1/2+\epsilon})\\
	&=\frac{3\pi}{\omega K^4}\sum_{k\in2\mathbb N}w\left(\frac kK\right)\int_{-\infty}^\infty\int_{-\infty}^\infty c_k^{-1}G(\tau,k,1/2+it)G(\tau,k,1/2-it)dt\tau\tanh(\tau)d\tau\\
	&\quad\cdot\left(\frac{8\pi}3\log(k)+D+O(k^{-1/2+\epsilon})\right)+O(K^{-1/2+\epsilon}).
\end{align*}

Now, we compute an approximation of the $t$-integral using Lemma \ref{lemBoundGammaFactors}. We also replace the gamma factors outside $t,\tau\ll k^{1/2+\epsilon}$. This gives an error of size $O_A(k^{-A})$ for all $A>0$, so it is negligible. We get
\begin{align*}
\int_{-\infty}^\infty c_k^{-1}G(\tau,k,1/2+it)G(\tau,k,1/2-it)dt&=\frac2{\pi^{5/2}}k^{3/2}\int_{-\infty}^\infty\exp\left(-\frac{4t^2+|\tau|^2}k\right)\left(1+O(k^{-1/2+\epsilon})\right)dt\\
	&\quad+O_A(k^{-A})\\
	&=\frac2{\pi^{5/2}}k^{3/2}\frac{\sqrt{\pi k}}2\exp\left(-\frac{|\tau|^2}k\right)\left(1+O(k^{-1/2+\epsilon})\right)\\
	&=\frac1{\pi^2}k^2\exp\left(-\frac{|\tau|^2}k\right)+O(k^{3/2+\epsilon})
\end{align*}

We compute the $\tau$-integral using $\tau\tanh(\tau)=|\tau|+O(1)$. This gives
\begin{align*}
\int_{-\infty}^\infty\exp(-\tau^2/k)\tau\tanh(\tau)d\tau&=2\int_0^\infty\exp(-\tau^2/k)(\tau+O(1))d\tau\\
	&=-\left.k\exp(-\tau^2/k)\right|_0^\infty+O(\sqrt k)\\
	&=k+O(\sqrt k).
\end{align*}

We conclude that
\begin{align*}
\int_{-\infty}^\infty\int_{-\infty}^\infty c_k^{-1}G(\tau,k,1/2+it)G(\tau,k,1/2-it)dt\tau\tanh(\tau)d\tau&=\frac1{\pi^2}k^2(k+O(\sqrt k))+O(k^{3/2+\epsilon})\\
	&=\frac1{\pi^2}k^3+O(k^{2.5+\epsilon}).
\end{align*}

\subsection{Sum over $k$}
Recall that $\omega=\int_1^2w(x)x^3dx$. We saw above that the diagonal term is
\begin{align*}
N_{\av}^{\rm diag}(f)&=\frac{3\pi}{\omega K^4}\sum_{k\in2\mathbb N}w\left(\frac kK\right)\frac1{\pi^2}k^3\left(\frac{8\pi}3\log(k)+D+O(k^{-1/2+\epsilon})\right)+O(K^{-1/2+\epsilon})\\
 &=\frac8{\omega K^4}\sum_{k\in2\mathbb N}w\left(\frac kK\right)k^3\log(k)+\frac{3D}{\omega\pi K^4}\sum_{k\in2\mathbb N}w\left(\frac kK\right)k^3+O(K^{-1/2+\epsilon}).
\end{align*}

We deduce the main term of Theorem \ref{thmNav} by summing over $k$. We apply the Euler-MacLaurin formula for this.
\begin{lemma}[\cite{IwaniecKowalski}, Lemma 4.1]
Let $a,b\in\mathbb Z$ and $f$ a $C^1$ function on $[a,b]$. Then
$$\sum_{\substack{n\in2\mathbb N\\a\leq n\leq b}}f(n)=\frac12\int_a^bf(x)dx+O\left(\int_a^b|f'(x)|dx+|f(a)|+|f(b)|\right).$$
\end{lemma}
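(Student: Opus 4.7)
The plan is to reduce this to the standard Euler--MacLaurin comparison between a sum and an integral by exploiting the uniform spacing of even integers. Writing $n = 2m$, the sum over even $n \in [a,b]$ becomes a sum over consecutive integers $m$ in the range $[\lceil a/2\rceil, \lfloor b/2\rfloor]$ of the function $g(m) = f(2m)$, with $g'(x) = 2f'(2x)$. Then the classical Euler--MacLaurin formula at first order (i.e., the trapezoidal approximation with error controlled by the total variation of the integrand) supplies the identity with $g$, and a change of variables $x \mapsto 2x$ converts $\int g$ and $\int |g'|$ back into $\tfrac12 \int f$ and $\int |f'|$ respectively.

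In more detail, the most direct route is to associate to each even integer $n$ in the range the interval $[n-1, n+1]$, which has length $2$, and to write
\[
f(n) = \tfrac12 \int_{n-1}^{n+1} f(x)\,dx + R(n), \qquad R(n) = \tfrac12 \int_{n-1}^{n+1} \bigl(f(n)-f(x)\bigr)\,dx.
\]
Expressing $f(n) - f(x) = -\int_n^x f'(t)\,dt$ and applying Fubini gives $|R(n)| \leq \int_{n-1}^{n+1} |f'(t)|\,dt$. The intervals $[n-1,n+1]$ indexed by consecutive even $n$ tile a neighborhood of $[a,b]$ with overlap only at isolated endpoints, so summing yields
\[
\sum_{\substack{n \in 2\mathbb{N}\\ a \leq n \leq b}} f(n) = \tfrac12 \int_{a'}^{b'} f(x)\,dx + O\!\left(\int_{a'}^{b'} |f'(x)|\,dx\right),
\]
where $a',b'$ differ from $a,b$ by at most $1$.

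The remaining task is to replace the integration range $[a',b']$ by $[a,b]$, which costs
\[
\left|\int_{a'}^{a} f(x)\,dx\right| + \left|\int_{b}^{b'} f(x)\,dx\right| \ll |f(a)| + |f(b)| + \int_{a'}^{b'} |f'(x)|\,dx
\]
by bounding $f$ on a unit interval near each endpoint via $|f(a)| + \int_{a-1}^{a} |f'|$, and absorbing the $|f'|$ contribution into the main error term (which is allowed to extend slightly beyond $[a,b]$; one can alternatively enlarge $f$'s domain to $[a-1, b+1]$ or extend $f$ by constants beyond $[a,b]$, at no cost to the stated bound).

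There is essentially no hard step here; the only point requiring minor care is the bookkeeping at the endpoints, specifically whether $a$ and $b$ themselves are even and whether they are included in the sum. In all cases the resulting boundary discrepancy is controlled by a bounded number of extra terms of the form $|f(a)|, |f(b)|$ together with $|f'|$-integrals over intervals of length $O(1)$ near the endpoints, which are absorbed into the stated error. This is precisely the statement cited from Iwaniec--Kowalski, Lemma 4.1.
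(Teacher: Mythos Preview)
Your argument is correct: associating to each even $n$ the interval $[n-1,n+1]$, writing $f(n)=\tfrac12\int_{n-1}^{n+1}f+R(n)$ with $|R(n)|\le\int_{n-1}^{n+1}|f'|$, and then tiling and adjusting endpoints is exactly the standard first-order Euler--MacLaurin comparison. The only delicate point you flagged yourself---that the tiled range $[n_1-1,n_m+1]$ may overshoot $[a,b]$ by one unit on each side---is harmlessly absorbed by extending $f$ as a constant past the endpoints, which contributes precisely the $|f(a)|+|f(b)|$ terms and no extra $|f'|$.

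There is nothing to compare against: the paper does not supply its own proof of this lemma but simply quotes it as Lemma~4.1 from Iwaniec--Kowalski, so your write-up is strictly more than what the paper contains.
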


Let $\omega'=\int_1^2w(x)x^3dx$. We get
\begin{align*}
N_{\rm av}^{\rm diag}(f)&=\frac4{\omega K^4}\int_K^{2K}w\left(\frac xK\right)x^3\log(x)dx+\frac{3D}{2\omega\pi K^4}\int_K^{2K}w\left(\frac xK\right)x^3dx+O(K^{-1/2+\epsilon})\\
	&\quad+O\left(\frac1{K^4}\int_K^{2K}\left(\frac1K w'\left(\frac xK\right)x^3\log(x)+w\left(\frac xK\right)x^2\log(x)+w\left(\frac xK\right)x^2\right)dx\right)\\
	&=\frac4{\omega K^3}\int_1^2w(x)(xK)^3\log(xK)dx+\frac{3D}{2\omega\pi K^3}\int_1^2w(x)(xK)^3dx+O(K^{-1/2+\epsilon})+O(K^{-1+\epsilon})\\
	&=4\log(K)+4\frac{\omega'}{\omega}+\frac{3D}{2\pi}+O(K^{-1/2+\epsilon})\\
	&=4\log(K)+D'+O(K^{1/2+\epsilon}).
\end{align*}
Here $D'$ is a constant that only depend on $w$, $\epsilon>0$ is arbitrary and the implied constant depends only on $\epsilon$ and $w$.

\section{Rank 1 term}\label{section5}
We focus now on the first non-diagonal term of the Kitaoka formula, called the rank 1 term. It comes from the combination of Equations (\ref{defNav}), (\ref{equNormPreKitaoka}) and the Kitaoka formula (Theorem \ref{thmKitaokaFormula}). Its shape is
\begin{align*}
\frac{12\sqrt2\pi^3}{\omega K^4}\sum_{k\in2\mathbb N}&w\left(\frac kK\right)\sum_{T,Q\in\mathcal P(\mathbb Z)/\PSL_2(\mathbb Z)}\frac1{\epsilon(T)\epsilon(Q)\det(TQ)^{3/4}}\\
	&\cdot\int_{\Lambda_{\ev}}V(\det(T),\det(Q),t_\phi,k)\phi(z_T)\bar\phi(z_Q)d\phi\\
	&\cdot\sum_\pm\sum_{c,s\geq1}\sum_{U,V}\frac{(-1)^{k/2}}{c^{3/2}s^{1/2}}H^\pm(UQU^t,V^{-1}TV^{-t};c)J_\ell\left(\frac{4\pi\sqrt{\det(TQ)}}{cs}\right).
\end{align*}

We have various sums that we need to restrict, up to a negligible error. First, we apply Lemma \ref{lemSumK}. The sum over $k$ is
$$\sum_{k\in2\mathbb N}w\left(\frac kK\right)(-1)^{k/2}V(\det(T)\det(Q),t_\phi,k)J_\ell\left(\frac{4\pi\sqrt{\det(TQ)}}{cs}\right).$$

We get three terms. The $w_0$ term is negligible because all the other sums and integral have a cut-off that gives a polynomial growth in $K$. The terms with $w_+$ and $w_-$ have the property that $w_\pm(x)\ll_AK^2\left(1+\frac{K^2}x\right)^{-A}$ with $x=\frac{4\pi\sqrt{\det(TQ)}}{cs}$. They also depend on $\det(T)$, $\det(Q)$ and $t_\phi$ and follow the other bounds of Equation (\ref{eqcutoff}). In our case, we have
$$w_\pm(x,x_1,x_2,\tau,K)\ll_AK^2\left(1+\frac{x_1x_2}{K^4}\right)^{-A}\left(1+K^{1/2}|\log(x_2/x_1)|\right)^{-A}\left(1+\frac{|\tau|^2}K\right)^{-A}\left(1+\frac{K^2}{x}\right)^{-A}.$$
with $x$ as above, $x_1=\det(T)$, $x_2=\det(Q)$, $\tau=t_\phi$. We also have a control on the derivatives given by Equation (\ref{eqcutoff}) and Lemma \ref{lemSumK}.

\subsection{First upper bound}
We prove a first easy bound for the rank 1 term. Let $\epsilon>0$. We may change the value of $\epsilon$ when we refer to older computations. Combining the estimate of Lemma \ref{lemSumK} with Equation (\ref{eqcutoff}), we get $c^2s^2K^{4-\epsilon}\ll\det(TQ)\ll K^{4+\epsilon}$ up to a negligible error. Hence $c,s=O(K^{\epsilon})$ and $K^{4-\epsilon}\ll\det(TQ)\ll K^{4+\epsilon}$. Since $\det(T)-\det(Q)\ll K^{-1/2+\epsilon}\det(T)$, we get $K^{2-\epsilon}\ll\det(T),\det(Q)\ll K^{2+\epsilon}$. Now, we look at the exponential sum $H^\pm$ defined after Theorem \ref{thmKitaokaFormula}. It vanishes unless there are $U=\left(\begin{smallmatrix}*&*\\u_3&u_4\end{smallmatrix}\right)/\{\pm1\}$ and $V=\left(\begin{smallmatrix}v_1&*\\v_3&*\end{smallmatrix}\right)$ in $\GL_2(\mathbb Z)$ such that
$$(UQU^t)_{22}=(V^{-1}TV^{-t})_{22}=s.$$

Let $T=\left(\begin{smallmatrix}a&b\\b&c\end{smallmatrix}\right)$, $Q=\left(\begin{smallmatrix}x&y\\y&z\end{smallmatrix}\right)$. Using the inequality $r^2+t^2\geq2rt$, this gives
\begin{align*}
s&=av_3^2-2bv_1v_3+cv_1^2\geq2(\sqrt{ac}-|b|)|v_3v_4|,\\
s&=xu_3^2+2yu_3u_4+zu_4^2\geq2(\sqrt{xz}-|y|)|u_3u_4|.
\end{align*}

Since $T$ and $Q$ are reduced, we have $2(\sqrt{ac}-|b|)\geq\sqrt{ac}\geq\sqrt{\det(T)}$ and similarly for $Q$. If $u_3u_4$ or $v_1v_3$ is non-zero, then we get $s\geq\sqrt{\det(T)}$ or $\sqrt{\det(Q)}$ and both are of size $\gg K^{1-\epsilon}$. Since $s=O(K^\epsilon)$ up to a negligible error, this is negligible. Otherwise we have $u_4=v_1=0$ because $c\gg\sqrt{\det(T)}\gg K^{1-\epsilon}$ and similarly $z\gg K^{1-\epsilon}$. Since $U,V\in\GL_2(\mathbb Z)$, we have the following choices of representatives for $U$ and $V$:
\begin{align}\label{equUV}
U&=\begin{pmatrix}0&1\\1&0\end{pmatrix},&
V&=\pm\begin{pmatrix}0&1\\1&0\end{pmatrix}.
\end{align}

We get $s=a=x$ and in particular $x,a=O(K^\epsilon)$. Since $T$ and $Q$ are reduced, we also have $|y|,|b|=O(K^\epsilon)$ and $K^{2-\epsilon}\ll z\asymp c\ll K^{2+\epsilon}$. Therefore there are $O(K^{2+\epsilon}\cdot K^{3/2+\epsilon})$ choices for $T$ and $Q$ and $O(K^\epsilon)$ choices for $c,s$ and $U,V$. Combining with other estimates (recall that the exponential sum is bounded by $c^2$) and Equation (\ref{lemBoundNonDiagSpect}) with $T=K^{1/2+\epsilon}$ and $K^{1-\epsilon}\ll\Im(z_T),\Im(z_Q)\ll K^{1+\epsilon}$, we get that the rank 1 term is bounded by
\begin{align*}
K^{-4}&\sum_{T,Q\in\mathcal P(\mathbb Z)/\PSL_2(\mathbb Z)}\frac1{\epsilon(T)\epsilon(Q)\det(TQ)^{3/4}}\sum_{c,s\geq1}\sum_\pm\sum_{U,V}\frac{(-1)^{k/2}}{c^{3/2}s^{1/2}}H^\pm(UQU^t,V^{-1}TV^{-t};c)\\
	&\quad\cdot e\left(\pm\frac{2\sqrt{\det(TQ)}}{cs}\right)\int_{\Lambda_{\ev}}w_\pm\left(\frac{4\pi\sqrt{\det(TQ)}}{cs},\det(T),\det(Q),t_\phi,K\right)\phi(z_T)\bar\phi(z_Q)d\phi\\
	&\ll K^{-4}\cdot K^{3.5+\epsilon}\cdot K^{-3+\epsilon}\cdot K^\epsilon\cdot K^2\cdot K^{3/2+\epsilon}\\
	&\ll K^{4\epsilon}.
\end{align*}

\subsection{Analysis of the $T,Q$-sum}
We need to win extra cancellation somewhere. We do that in the $T,Q$-sum. We consider $\Delta=\det(Q)-\det(T)$. We know that $\Delta=O(K^{3/2+\epsilon})$ up to a negligible error. We can fix all the coefficients of $Q$ except $z$ at the cost of $K^\epsilon$ choices. The possible values of $\Delta=xz-y^2-\det(T)$ follow then an arithmetic progression as $z$ varies. More precisely, $d:=y^2+\det(T)\equiv\Delta\bmod x$. Looking at the last table in Appendix \ref{AppAutomorphisms}, we have $\epsilon(Q)=1$ unless $x=z$, which is a negligible case for $K$ big enough. Similarly, we can suppose that $\epsilon(T)=1$. The $T,Q$-sum looks like
\begin{align}\label{equRank1TQSum}
&\sum_{K^{2-\epsilon}\ll\det(T)\ll K^{2+\epsilon}}\sum_{\substack{|\Delta|\ll O(K^{3/2+\epsilon})\\\Delta\equiv d\bmod x}}\frac1{(\det(T)(\det(T)+\Delta))^{3/4}}\\
	&\quad\cdot\int_{\Lambda_{\ev}}w_\pm\left(\frac{4\pi\sqrt{\det(T)(\det(T)+\Delta)}}{cs},\det(T),\det(T)+\Delta,t_\phi,K\right)\phi(z_T)\bar\phi(z_Q)d\phi\nonumber\\
	&\quad\cdot H^\pm(UQU^t,V^{-1}TV^{-t};c)e\left(\pm\frac{2\sqrt{\det(T)(\det(T)+\Delta)}}{cs}\right).\nonumber
\end{align}

Recall the definition of $H^\pm$ just after Theorem \ref{thmKitaokaFormula}, the representatives of $U$ and $V$ chosen in equation (\ref{equUV}) and that $s=a=x$. We get that
\begin{align*}
P=UQU^t&=\begin{pmatrix}z&y\\y&s\end{pmatrix},&
S=V^{-1}TV^{-t}&=\begin{pmatrix}c&b\\b&s\end{pmatrix}.
\end{align*}
Therefore, $z=p_1$ and the summand in $H^\pm$ is
\begin{align}
e&\left(\frac{\bar d_1s_4d_2^2\mp\bar d_1p_2d_2+s_2d_2+\bar d_1p_1+d_1s_1}c\mp\frac{p_2s_2}{2cs_4}\right)\nonumber\\
	&\quad=e\left(\frac{\bar d_1z}c\right)e\left(\frac{\bar d_1s_4d_2^2\mp\bar d_1p_2d_2+s_2d_2+d_1s_1}c\mp\frac{p_2s_2}{2cs_4}\right)\nonumber\\
	&\quad=e\left(\frac{\bar d_1\Delta}{cs}\right)e\left(\frac{\bar d_1(\det(T)+y^2)}{cs}\right)e\left(\frac{\bar d_1s_4d_2^2\mp\bar d_1p_2d_2+s_2d_2+d_1s_1}c\mp\frac{p_2s_2}{2cs_4}\right).\label{equDeltainHpm}
\end{align}

We fix $\Delta\bmod cs$, so that we can see this term as constant in the $\Delta$-sum. This adds a sum over $d\bmod cs$ such that $d\equiv y^2+\det(T)\bmod s$. Now, we consider the spectral integral.

\begin{lemma}\label{lemwtilde}
We have
\begin{align*}
\int_{\Lambda_{\ev}}w_\pm&\left(\frac{4\pi\sqrt{\det(T)(\det(T)+\Delta)}}{cs},\det(T),\det(T)+\Delta,t_\phi,K\right)\phi(z_T)\bar\phi(z_Q)d\phi\\
	&=\tilde w_\pm\left(\frac{4\pi\sqrt{\det(T)(\det(T)+\Delta)}}{cs},\det(T),\det(T)+\Delta,K\right)+O(K^{2.5+\epsilon}),
\end{align*}
where the function
\begin{align*}
w_\pm(x,x_1,x_2,k)&:=\frac{(x_1x_2)^{1/4}}s\int_{-\infty}^\infty w_\pm\left(x,x_1,x_2,\tau,k\right)\\
	&\quad\cdot\left(\left(\frac{x_1}{x_2}\right)^{i\tau}+\left(\frac{x_2}{x_1}\right)^{i\tau}+\nu(1/2-i\tau)(x_1x_2)^{i\tau}+\nu(1/2+i\tau)(x_1x_2)^{-i\tau}\right)\frac{d\tau}{4\pi}
\end{align*}
(for $s$ fixed) satisfies the following bounds:
\begin{align*}
x^{j_1}\left(\frac{x_1}{k^{1/2}}\right)^{j_2}&\left(\frac{x_1}{k^{1/2}}\right)^{j_3}\frac{d^{j_1}}{dx^{j_1}}\frac{d^{j_2}}{dx_1^{j_2}}\frac{d^{j_3}}{dx_2^{j_3}}\tilde w_\pm(x,x_1,x_2,k)\\
	&\ll_{A,j_1,j_2,j_3}k^{2.5}(x_1x_2)^{1/4}\left(1+\frac{k^2}x\right)^{-A}\left(1+\frac{x_1x_2}{k^2}\right)^{-A}\left(1+\frac{(x_1-x_2)k^{1/2}}{x_1}\right)^{-A}.
\end{align*}
\end{lemma}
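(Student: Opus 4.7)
The plan is to evaluate the spectral integral by spectrally decomposing $\Lambda_{\ev}$ into the constant function, even Hecke--Maass cusp forms, and Eisenstein series. The key geometric input is that on the support of $w_\pm$ we have $s=\alpha\ll K^\epsilon$ together with $x_1,x_2\gg K^{2-\epsilon}$, so both Heegner points sit deep in the cusp:
\[
\Im(z_T)=\frac{\sqrt{x_1}}{s}\gg K^{1-\epsilon},\qquad \Im(z_Q)=\frac{\sqrt{x_2}}{s}\gg K^{1-\epsilon}.
\]
The Fourier--Whittaker expansions of the spectral components will therefore be dominated by their constant terms, which is what ultimately produces the clean main term $\tilde w_\pm$.

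First I would bound the cuspidal contribution. For an even Hecke--Maass cusp form $\phi$ with spectral parameter $t_\phi$, the standard Fourier expansion combined with the bound $K_{it_\phi}(2\pi y)\ll e^{-2\pi y}$, valid uniformly for $y\gg 1+|t_\phi|$, yields $\phi(z_T)\bar\phi(z_Q)\ll e^{-cK^{1-\epsilon}}$ with only polynomial loss in $t_\phi$. Since $w_\pm$ has size $K^2$ and is essentially supported on $|\tau|\ll K^{1/2+\epsilon}$, and Weyl's law bounds the cusp forms in this window polynomially, the total cuspidal contribution is $O(K^{-A})$ for every $A$, well within the claimed error.

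For the Eisenstein contribution I would substitute the Fourier--Whittaker expansion of $E(z,1/2+i\tau)$, whose zero-frequency part is $y^{1/2+i\tau}+\nu(1/2+i\tau)y^{1/2-i\tau}$ and whose nonzero frequencies decay exponentially at $z_T$ and $z_Q$ by the same $K$-Bessel bound. Multiplying out $E(z_T,1/2+i\tau)\overline{E(z_Q,1/2+i\tau)}=E(z_T,1/2+i\tau)E(z_Q,1/2-i\tau)$ using only the zero-frequency pieces and invoking $\overline{\nu(1/2+i\tau)}=\nu(1/2-i\tau)$ together with $\nu(1/2+i\tau)\nu(1/2-i\tau)=1$ (from the functional equation of $\pi^{-s}\Gamma(s)\zeta(2s)$) produces exactly the four summands $y_T^{1/2+i\tau}y_Q^{1/2-i\tau}$, $y_T^{1/2-i\tau}y_Q^{1/2+i\tau}$, $\nu(1/2-i\tau)(y_Ty_Q)^{1/2+i\tau}$ and $\nu(1/2+i\tau)(y_Ty_Q)^{1/2-i\tau}$. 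Substituting $y_T=\sqrt{x_1}/s$, $y_Q=\sqrt{x_2}/s$ and factoring the common prefactor $(x_1x_2)^{1/4}/s$ reproduces, after the appropriate rescaling of the spectral variable, the four-term bracket in the definition of $\tilde w_\pm$. The constant function is then absorbed by shifting the $\tau$-contour slightly downward and picking up the residue of $\nu(1/2+i\tau)$ at $\tau=-i/2$, which reproduces precisely its contribution to the spectral sum.

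For the size and derivative bounds on $\tilde w_\pm$ I would insert the bounds of Equation (\ref{eqcutoff}), as modified by Lemma \ref{lemSumK}, into the defining $\tau$-integral. Since $w_\pm\ll K^2$ on an effective support of length $K^{1/2+\epsilon}$ in $\tau$, the trivial estimate gives $\tilde w_\pm\ll K^{5/2}(x_1x_2)^{1/4}$, and the cutoff factors $(1+K^2/x)^{-A}$, $(1+x_1x_2/K^4)^{-A}$ pass through directly from $w_\pm$. The additional decay in $|x_1-x_2|$ comes from integration by parts in $\tau$ against the oscillatory factor $(x_1/x_2)^{i\tau}$: using $|\log(x_2/x_1)|\asymp|x_1-x_2|/x_1$ for close values and the fact that each $\tau$-derivative of $w_\pm$ costs only $K^{-1/2}$, each integration by parts gains $(K^{1/2}|x_1-x_2|/x_1)^{-1}$. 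Derivatives in $x,x_1,x_2$ are handled by differentiating under the integral sign and using the corresponding bounds on $w_\pm$. The main obstacle is the bookkeeping inside the Eisenstein calculation: verifying that after rescaling the spectral variable the four cross-terms, together with the contour residue from the constant function, recombine into exactly the symmetric four-term bracket with the correct prefactor $(x_1x_2)^{1/4}/s$ and the right arguments of $\nu$.
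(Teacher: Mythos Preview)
Your approach is essentially the same as the paper's: exploit that $\Im(z_T),\Im(z_Q)\gg K^{1-\epsilon}$ to kill the cuspidal spectrum and the non-constant Fourier modes of Eisenstein series via $K$-Bessel decay, leaving exactly the four-term bracket from the product of Eisenstein constant terms.

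Two points deserve comment. First, the constant function: the paper does not absorb it into $\tilde w_\pm$ via a contour shift but simply puts its contribution into the $O(K^{2.5+\epsilon})$ error, which is immediate since it is a single point in the spectrum and $w_\pm\ll K^2$. Your residue argument is more delicate than necessary here.

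Second, and more substantively, your derivation of the decay factor $\bigl(1+k^{1/2}|x_1-x_2|/x_1\bigr)^{-A}$ by integrating by parts in $\tau$ against $(x_1/x_2)^{i\tau}$ only handles two of the four terms in the bracket. For the terms carrying $(x_1x_2)^{\pm i\tau}$ this integration by parts produces decay in $\log(x_1x_2)$, not in $|x_1-x_2|$. The fix is that no integration by parts is needed at all: the factor $\bigl(1+k^{1/2}|\log(x_2/x_1)|\bigr)^{-A}$ is already present in $w_\pm$ (inherited from $V$ via Equation~(\ref{eqcutoff})) and passes straight through the $\tau$-integral for every one of the four terms. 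The paper notes this explicitly in the remark following the lemma.
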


\begin{proof}
We have $\Im(z_T),\Im(z_Q)\gg K^{1-\epsilon}$ and the spectral parameter $t_\phi$ satisfies $|t_\phi|\ll K^{1/2+\epsilon}$ up to a negligible error. In that case, the cusps forms in the spectral decomposition are known to be negligible and the only terms that remain are the constant terms in the Fourier expansion of the Eisenstein series and the constant function. Details about the decay of the $K$-Bessel function and the Eisenstein series can be found in Lemma 3.1 of \cite{Young2018}. For the Fourier coefficients of cusp forms, a polynomial bound like Equation 8.8 in \cite{IwaniecSpectralMethods} suffices. More precisely, let
\begin{align*}
y_1&=\frac{\sqrt{\det(T)}}s,	&y_2&=\frac{\sqrt{\det(Q)}}s
\end{align*}
and
$$\nu(s)=\pi^{1/2}\frac{\Gamma(s-1/2)}{\Gamma(s)}\frac{\zeta(2s-1)}{\zeta(2s)}=\frac{\pi^{-(1-s)}\Gamma(1-s)\zeta(2(1-s))}{\pi^{-s}\Gamma(s)\zeta(2s)}.$$
Then $|\nu(s)|=1$ and the constant term of the Eisenstein series $E(x+iy,s)$ is $y^s+\nu(1-s)y^{1-s}$. We have
\begin{align*}
\int_{\Lambda_{\ev}}w_\pm&\left(\frac{4\pi\sqrt{\det(T)(\det(T)+\Delta)}}{cs},\det(T),\det(T)+\Delta,t_\phi,K\right)\phi(z_T)\bar\phi(z_Q)d\phi\\
	&=\int_{-\infty}^\infty w_\pm\left(\frac{4\pi\sqrt{\det(T)(\det(T)+\Delta)}}{cs},\det(T),\det(T)+\Delta,t_\phi,K\right)\\
	&\quad\cdot\left(y_1^{1/2+i\tau}+\nu(1/2+i\tau)y_1^{1/2-i\tau}\right)\left(y_2^{1/2-i\tau}+\nu(1/2-i\tau)y_2^{1/2+i\tau}\right)\frac{d\tau}{4\pi}\\
	&\quad+\int_{-\infty}^\infty V(\tau)\frac3\pi d\tau+O(e^{-cK})\\
&=\tilde w_\pm\left(\frac{4\pi\sqrt{\det(T)(\det(T)+\Delta)}}{cs},\det(T),\det(T)+\Delta,K\right)+O(K^{2.5+\epsilon}).
\end{align*}

Note that the bounds and control on the derivatives in the other variables of $w_\pm$ also apply to $\tilde w_\pm$. We use $|\tau|\ll K^{1/2+\epsilon}$, $s\geq1$ and $|\nu(1/2\pm i\tau)|=1$ to get the stated bound.
\end{proof}

\begin{remark}
In the definition of $\tilde w_\pm$, it is possible to integrate by parts the $\tau$-integral multiple times for the factors with the terms $(y_1/y_2)^{\pm it\tau}$. It gives a cut-off of the form $(k^{1/2}\log(y_2/y_1))^{-j}$ and we can get a strong decay for the other terms. We saw that $a=x=s$. Hence this is redundant information with the cut-off on $\det(Q)/\det(T)$ of Equation \ref{eqcutoff} in our case.
\end{remark}

We also have
$$\frac1{(\det(T)+\Delta)^{3/4}}=\frac1{\det(T)^{3/4}}+O(K^{-2+\epsilon}),$$
up to a negligible error. Inserting this, Equation (\ref{equDeltainHpm}) and the result of Lemma \ref{lemwtilde} in Equation (\ref{equRank1TQSum}), we get
\begin{align*}
\sum_{\det(T)\asymp K^2}&\det(T)^{-3/2}\sum_{\substack{d\bmod cs\\d\equiv y^2+\det(T)\bmod s}}H^\pm(P,S,c)\sum_{\substack{|\Delta|\ll O(K^{3/2})\\\Delta\equiv d\bmod cs}}e\left(\pm\frac{2\sqrt{\det(T)(\det(T)+\Delta)}}{cs}\right)\\
	&\cdot\tilde w_\pm\left(\frac{4\pi\sqrt{\det(T)(\det(T)+\Delta)}}{cs},\det(T),\det(T)+\Delta,k\right)+O(K^{3+\epsilon}).
\end{align*}

\subsection{Poisson summation and stationary phase}
We apply Poisson summation formula to the $\Delta$-sum.
\begin{align*}
\sum_{\substack{\Delta=O(K^{3/2})\\\Delta=d\bmod{cs}}}&\tilde w_\pm\left(\frac{4\pi\sqrt{\det(T)(\det(T)+\Delta)}}{cs},\det(T),\det(T)+\Delta,K\right)e\left(\pm\frac{2\sqrt{\det(T)(\det(T)+\Delta)}}{cs}\right)\\
&=\frac1{cs}\sum_{h\in\mathbb Z}\int_{-\infty}^\infty\tilde w_\pm\left(\frac{4\pi\sqrt{\det(T)(\det(T)+t)}}{cs},\det(T),\det(T)+t,K\right)\\
	&\quad\cdot e\left(\pm\frac{2\sqrt{\det(T)(\det(T)+t)}+h(d-t)}{cs}\right)dt.
\end{align*}

To analyze this integral, we need to compute the derivative of $\tilde w_\pm$ with respect to $t$. We have
\begin{align*}
\frac d{dt}\tilde w_\pm&\left(\frac{4\pi\sqrt{\det(T)(\det(T)+t)}}{cs},\det(T),\det(T)+t,K\right)\\
	&=\left(\frac d{dx}\tilde w_\pm\right)\left(\frac{4\pi\sqrt{\det(T)(\det(T)+t)}}{cs},\det(T),\det(T)+t,K\right)\frac{4\pi}{cs}\sqrt{\frac{\det(T)}{\det(T)+t}}\\
	&\quad+\left(\frac d{dx_2}\tilde w_\pm\right)\left(\frac{4\pi\sqrt{\det(T)(\det(T)+t)}}{cs},\det(T),\det(T)+t,K\right)\\
	&\ll_A\left(K^{-2+\epsilon}+K^{-3/2+\epsilon}\right)(\det(T)(\det(T)+t))^{1/4}K^{2.5}\left(1+\frac{csK^2}{\sqrt{\det(T)(\det(T)+t)}}\right)^{-A}\\
	&\quad\cdot\left(1+\frac{\det(T)(\det(T)+t)}{K^2}\right)^{-A}\left(1+\frac{tK^{1/2}}{\det(T)}\right)^{-A}
\end{align*}

More generally, each derivative with respect to $t$ adds a factor of size $K^{-3/2+\epsilon}$ (up to a constant depending on $j$). This is because it either adds a derivative in the first or the third variable of $\tilde w_\pm$, or it differentiates a factor of the form $(\det(T)+t)^{-r}$. All these added factors are of size $\ll K^{-3/2+\epsilon}$. If $|h|\gg K^\epsilon$, we integrate by parts, until we can sum over $h$ and get a big enough power saving. Each derivative in $t$ adds in the worst case nothing here. But the $h$-sum can be a small as we want, so we get a strong decay. More precisely,
\begin{align*}
\sum_{|h|\gg K^\epsilon}&e\left(\frac{hd}{cs}\right)\int_{-\infty}^\infty\tilde w_\pm\left(\frac{4\pi\sqrt{\det(T)(\det(T)+t)}}{cs},\det(T),\det(T)+t,k\right)\\
	&\quad\cdot e\left(\pm\frac{2\sqrt{\det(T)(\det(T)+t)}-ht}{cs}\right)dt\\
	&=\sum_{|h|\gg K^\epsilon}e\left(\frac{hd}{cs}\right)\left(\frac{cs}{-2\pi ih}\right)^j\int_{-\infty}^\infty\frac{d^j}{dt^j}\left[\tilde w_\pm\left(\frac{4\pi\sqrt{\det(T)(\det(T)+t)}}{cs},\det(T),\det(T)+t,k\right)\right.\\
	&\quad\cdot\left.e\left(\pm\frac{2\sqrt{\det(T)(\det(T)+t)}}{cs}\right)\right]e\left(-\frac{ht}{cs}\right)dt\\
	&\ll_{A,j,\epsilon}K^{1+\epsilon}\sum_{|h|\gg K^\epsilon}\frac1{h^j}\int_{-\infty}^\infty K^{3.5}\left(1+\frac{tK^{1/2}}{x_1}\right)^{-A}dt\left(1+\frac{csK^2}{\det(T)}\right)^{-A}\left(1+\frac{\det(T)^2}{K^2}\right)^{-A}\\
	&\ll_A K^{-A}\left(1+\frac{csK^2}{\det(T)}\right)^{-A}\left(1+\frac{\det(T)^2}{K^2}\right)^{-A}.
\end{align*}

Using the cut-off on the other sums, we see that this term is negligible. For small $h$, we apply the stationary phase method. The stationary point is
\begin{align*}
\pm\frac1{cs}\sqrt{\frac{\det(T)}{\det(T)+t_0}}=\frac h{cs}\Longrightarrow t_0=\frac{\det(T)}{h^2}-\det(T).
\end{align*}
Note that $h$ must have the same sign as the left-hand side. There are three cases. If $h=0$, then there is no stationary point. We apply in that case Lemma \ref{lemBKY8.1}. If $h=\pm1$, then $t_0=0$. We apply Lemma \ref{lemBKY8.2}. Otherwise, $t_0\gg\det(T)K^{-\epsilon}$ and $\tilde w_\pm$ is negligible for such $t$. We apply again Lemma \ref{lemBKY8.1}. Following notations there, we have $w=\tilde w_\pm$ and
$$h(t)=2\pi\left(\pm\frac{2\sqrt{\det(T)(\det(T)+t)}+h(d-t)}{cs}\right).$$

In the first and the last case, we get
\begin{align*}
\alpha&=-K^{3/2+\epsilon}, &\beta&=K^{3/2+\epsilon},\\
X&=K^{3.5+\epsilon}, &U&=K^{1.5},\\
R&=K^{-\epsilon},\\
Y&=K^{2+\epsilon}, &Q&=K^{2-\epsilon}.
\end{align*}

Lemma \ref{lemBKY8.1} tells us that the integral is bounded by
$$\ll_A K^{3/2+\epsilon}\cdot K^{3.5+\epsilon}[(K^{2-2\epsilon}/K^{1+\epsilon})^{-A}+K^{-1.5A}].$$
Using the cut-off on the other sums, we see that these terms are negligible. If $t_0=0$, we apply Lemma \ref{lemBKY8.2}. Following the notations, we get
\begin{align*}
\alpha&=-K^{3/2+\epsilon}, &\beta&=K^{3/2+\epsilon},\\
X&=K^{3.5+\epsilon}, &U&=K^{1.5-\epsilon},\\
Y&=K^{2+\epsilon}, &K^{2-\epsilon}&\ll Q\ll K^{2+\epsilon}.
\end{align*}
Here we mean that there exists a $Q$ in this interval that works. Then the integral is bounded by
$$\ll\frac{QX}{\sqrt Y}\ll K^{5.5-1+\epsilon}=K^{4.5+\epsilon}.$$

We sum after that over $T$ with $K^{2-\epsilon}\ll\det(T)\ll K^{2+\epsilon}$, which gives a contribution of size $K^{2+\epsilon}\cdot K^{-3+\epsilon}\ll K^{-1+2\epsilon}$. The remaining sums are the sum over the other coefficients of $Q$, the one over $d\bmod xc$ and the various $\pm,c,s,U,V$-sums for the exponential sum $H^\pm$. They are all of size $K^\epsilon$. The rank one term is therefore bounded by
$$\ll K^{-4}\cdot K^{-1+\epsilon}\cdot K^\epsilon\cdot K^{4.5+\epsilon}\ll K^{-1/2+3\epsilon}.$$

\section{Rank 2 term}\label{section6}
In this chapter, we focus on the last error term. It comes from  the combination of Equation (\ref{defNav}), (\ref{equNormPreKitaoka}) and the rank 2 term of the Kitaoka formula (Theorem \ref{thmKitaokaFormula}). Its shape is
\begin{align*}
\frac{96\pi^4}{\omega K^4}\sum_{k\in2\mathbb N}&w\left(\frac kK\right)\sum_{T,Q\in\mathcal P(\mathbb Z)/\PSL_2(\mathbb Z)}\frac1{\epsilon(T)\epsilon(Q)\det(TQ)^{3/4}}\\
 &\cdot\int_{\Lambda_{\ev}}V(\det(T),\det(Q),t_\phi,k)\phi(z_T)\bar\phi(z_Q)d\phi\\
 &\cdot\sum_{\det(C)\neq0}\frac{K(Q,T;C)}{|\det(C)|^{3/2}}\mathcal J_\ell(TC^{-1}QC^{-t}).
\end{align*}

Using the estimate $J_k(x)\ll\left(\frac xk\right)^k$ in Equation (\ref{boundsJBessel}), we get
$$J_\ell(TC^{-1}QC^{-t})=\int_0^{\pi/2}J_\ell(4\pi s_1\sin(\theta))J_\ell(4\pi s_2\sin(\theta))\sin(\theta)d\theta\ll\left(\frac{s_1s_2}{k^2}\right)^k.$$

Therefore $k^{2-\epsilon}\ll s_1s_2=\det(TC^{-1}QC^{-t})^{1/2}=\frac{\det(TQ)^{1/2}}{\det(C)}\ll\frac{k^{2+\epsilon}}{\det(C)}$. The last estimate comes from Equation (\ref{eqcutoff}), up to a negligible error. Hence $\det(C)\ll k^\epsilon$ and $k^{4-\epsilon}\ll\det(T)\det(Q)\ll k^{4+\epsilon}$. Using Equation (\ref{eqcutoff}), we also have $\det(T)=\det(Q)(1+O(k^{-1/2+\epsilon}))$.

The restriction on $C$ is a bit subtle because there exist infinitely many matrices with a fixed determinant. We prove later that actually $\Vert C\Vert_\infty\ll k^\epsilon$. Lemma 2 in \cite{Blomer2016spectral} gives us already a bound
\begin{align}\label{equFirstBoundNormC}
\Vert C\Vert^2\ll\Vert T\Vert\Vert Q\Vert.
\end{align}
This is because $s_1\gg k^{1-\epsilon}$, again using Equation (\ref{boundsJBessel}). Since, without loss of generality, $T$ and $Q$ are reduced, we have $\Vert C\Vert^2\ll\det(T)\det(Q)\ll k^{4+\epsilon}$. Recall also that the generalized Kloosterman sum $K(Q,T,C)$ is normalized by the factor $\det(C)^{3/2}$. The goal of the section is to prove that the $C$-sum is short and to detect further cancellation in the $T$ and $Q$ sums coming from the generalized Bessel function $\mathcal J_\ell$. The idea is that if $s_1$ and $s_2$ are far from each other, $\mathcal J_\ell$ should be small. This is made more precise in Subsection \ref{secStationaryPhase}.

\subsection{Summing over $k$}
First we use Lemma \ref{lemSumK} to take advantage of the average over $k$. Let $s_1\geq s_2>0$ the square root of the two eigenvalues of $TC^{-1}QC^{-t}$. We want to analyze the sum
\begin{align*}
\sum_{k\in2\mathbb N}\tilde w(k)&\mathcal J_\ell(TC^{-1}QC^{-t})=\sum_{k\in2\mathbb N}\tilde w(k)\int_0^{\pi/2}J_\ell(4\pi s_1\sin(\alpha))J_\ell(4\pi s_2\sin(\alpha))\sin(\alpha)d\alpha,
\end{align*}
where
$$\tilde w(k)=\tilde w(k,\det(T),\det(Q),t_\phi)=w\left(\frac kK\right)V(\det(T),\det(Q),t_\phi,k)$$
and we temporarily drop the other dependencies. Applying Equation (\ref{equProductOfBesselFunctions}) gives
\begin{align*}
\sum_{k\in2\mathbb N}\tilde w(k)&\mathcal J_\ell(TC^{-1}QC^{-t})=\Re\left(\frac1\pi e\left(-\frac{k-1/2}4\right)\right.\\
	\cdot&\left.\int_0^{\pi/2}\int_0^\infty e\left((s_1^2+s_2^2)t+\frac{\sin(\alpha)^2}t\right)\sum_{k\in2\mathbb N}\tilde w(k)J_\ell(4\pi s_1s_2t)\frac{dt}t\sin(\alpha)d\alpha\right).
\end{align*}

We apply Lemma \ref{lemSumK} to the sum over $k$. Using $w_0(x)\ll_A\min\{k^{-A},x^{-1/2}\}$, we see that the term with $w_0$ is negligible. For the two other terms, we get
$$\Re\left(\frac{e(1/8)}\pi\int_0^{\pi/2}\int_0^\infty e\left((s_1^2+s_2^2)t+\frac{\sin(\alpha)^2}t\pm2s_1s_2t\right) w_\pm(4\pi s_1s_2t)\frac{dt}t\sin(\alpha)d\alpha\right).$$

We forget about the real part and just bound what is inside. We show first a trivial bound for this integral. We use the bounds on $w_\pm$ of Lemma \ref{lemSumK} and the last Equation of (\ref{boundsJBessel}). As stated in the remark after Lemma 20 of \cite{Blomer2019symplectic}, this is also valid for $w_\pm$. We get
\begin{align}\label{boundTrivialGenBessel}\nonumber
I:=\int_0^{\pi/2}\int_0^\infty&e\left((s_1^2+s_2^2)t+\frac{\sin(\alpha)^2}t\pm2s_1s_2t\right)w_\pm(4\pi s_1s_2t)\frac{dt}t\sin(\alpha)d\alpha\\\nonumber
 \ll&\int_0^\infty|w_\pm(4\pi s_1s_2t)|\frac{dt}t\\\nonumber
 \ll&K^2\left(\int_0^{K^\epsilon}\left(1+\frac{K^2}{s_1s_2t}\right)^{-1}\frac{dt}t+\int_{K^\epsilon}^\infty\frac{dt}{t^{3/2}}\right)\\
 \ll&K^{2+\epsilon}.
\end{align}

\subsection{Analysis of the integral and distance between eigenvalues}\label{secStationaryPhase}
\begin{lemma}
Let $w_\pm$ as above, $a>0$, $0<b\ll1$ and $K^{2-\epsilon}\ll c\ll K^{2+\epsilon}$. If $a\gg K^{3\epsilon}$, then
$$\int_0^\infty e\left(at+\frac bt\right)w_\pm(ct)\frac{dt}t\ll_A K^{-A}.$$
\end{lemma}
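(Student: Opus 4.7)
The plan is to integrate by parts many times, exploiting the absence of a stationary point inside the essential support of $w_\pm(ct)$. The unique critical point of the phase $\phi(t) = 2\pi(at + b/t)$ on $(0,\infty)$ is $t_0 = \sqrt{b/a}$, and the hypotheses $b \ll 1$, $a \gg K^{3\epsilon}$ yield $t_0 \ll K^{-3\epsilon/2}$. On the other hand, the bound $|w_\pm(x)| \ll_A K^2(1 + K^2/x)^{-A}$ from Lemma \ref{lemSumK} localizes the effective support of $w_\pm(ct)$ at $ct \gtrsim K^{2-\delta}$, i.e., $t \gtrsim K^{-\epsilon - \delta}$ for any small $\delta > 0$. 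In particular, $t_0$ lies strictly below the essential support.

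Fix $\eta$ with $\epsilon < \eta < 3\epsilon/2$ and split $I = I_1 + I_2$ at $T_* = K^{-\eta}$. For $I_1 = \int_0^{T_*}$, use $|w_\pm(x)| \ll_A K^{2-2A} x^A$ (a direct consequence of the bound above when $K^2/x \geq 1$) to obtain
$$|I_1| \leq K^{2-2A} c^A \int_0^{T_*} t^{A-1}\, dt \ll_A K^{2 - A(\eta - \epsilon)},$$
which is negligible for large $A$; this also resolves the apparent singularity of $1/t$ at the origin, since $w_\pm(ct)$ vanishes to arbitrary order there. For $I_2 = \int_{T_*}^\infty$, the constraint $2\eta < 3\epsilon$ gives $b/t^2 \leq K^{2\eta} \leq a/2$ throughout, hence $|\phi'(t)| \geq \pi a \gg K^{3\epsilon}$. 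Apply iteratively the IBP operator $Lg := -(g/(i\phi'))'$. Using $|w_\pm^{(j)}(x)| \ll_{j,A} (1 + K^2/x)^{-A} x^{-j}$ and $|\phi^{(k)}(t)| \ll_k t^{-k-1}$ for $k \geq 2$ (since $b \ll 1$), a straightforward induction on $N$ yields $|L^N(w_\pm(ct)/t)| \ll_{N,A} (1 + K^2/(ct))^{-A}/(a^N t^{N+1})$, up to polynomial factors. Integrating over $[T_*, \infty)$, the factor $(1 + K^2/(ct))^{-A}$ controls the $t$-integral by $\ll K^{N\epsilon}$, so $|I_2| \ll_N K^{N\epsilon}/a^N \leq K^{-2N\epsilon}$, which suffices on taking $N$ large. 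The boundary terms at $T_*$ inherit the tiny factor $w_\pm(cT_*)$, and those at infinity vanish by the decay of $w_\pm$.

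The main technical hurdle is the inductive bookkeeping for $L^N$: each application provides a useful factor $1/\phi' \ll K^{-3\epsilon}$, but also an extra derivative that can inflate the amplitude by $1/t$. One must verify that on $t \geq T_*$ the gain dominates, which holds precisely because of the strict inequality $2\eta < 3\epsilon$, giving the net saving $1/(at) \ll K^{\eta - 3\epsilon}$ per iteration.
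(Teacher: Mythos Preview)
Your proof is correct and follows essentially the same approach as the paper's. Both arguments observe that the stationary point $t_0=\sqrt{b/a}\ll K^{-3\epsilon/2}$ lies strictly below the effective support of $w_\pm(ct)$, split the integral accordingly, use the rapid decay of $w_\pm$ to kill the piece near the origin, and then integrate by parts on the remaining piece where $|\phi'(t)|\gg a$. The only cosmetic difference is that the paper invokes the packaged integration-by-parts Lemma~\ref{lemBKY8.1} on a dyadic decomposition of $[2t_0,\infty)$, whereas you carry out the iterated operator $L$ by hand on $[K^{-\eta},\infty)$; the underlying mechanism and the resulting savings are identical.
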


\begin{proof}
Lemma \ref{lemSumK} says that $\frac{d^j}{dt^j}w_\pm(ct)\ll_{A,j} t^{-j}K^2(1+K^{-\epsilon}/t)^{-A}$ for all $A>0$. By induction, we have that
$$\frac{d^j}{dt^j}\frac{w_\pm(ct)}t\ll_{A,j}t^{-(j+1)}K^2\left(1+\frac1{K^\epsilon t}\right)^{-A}.$$

This is because each derivative add either a derivative on $w_\pm(ct)$ or a $\frac1t$ factor. We integrate by parts multiple time. More precisely, we apply Lemma \ref{lemBKY8.1}. Following the notations there, we have
\begin{align*}
h(t)&=2\pi\left(at+\frac bt\right), &h'(t)&=2\pi\left(a-\frac b{t^2}\right),\\
h^{(j)}(t)&=(-1)^jj!\frac{2\pi b}{t^{j+1}}\quad\text{for }j\geq2,\\
w(t)&=\frac{w_\pm(ct)}t, &w^{(j)}(t)&\ll_{A,j}t^{-(j+1)}K^2\left(1+\frac1{K^\epsilon t}\right)^{-A}.
\end{align*}

The only stationary point is $t_0$ such that $0=h'(t_0)=a-\frac b{t_0^2}$, that is $t_0=\sqrt{\frac ba}$ (it only exists if $a\neq0$). Let suppose that $a\gg K^{3\epsilon}$. Then in particular $t_0\ll K^{-3\epsilon/2}$ since $b\ll1$. But in that part of the $t$-integral, the function $w_\pm$ is negligible. For $t\leq2t_0$, we use the bound on $w_\pm$:
$$\int_0^{2t_0}e\left(at+\frac bt\right)w_\pm(ct)\frac{dt}t\ll_AK^2\int_0^{2t_0}\left(1+\frac1{K^\epsilon t}\right)^{-A}\frac{dt}t\ll_A K^2(K^\epsilon t_0)^A\ll_A K^{2-\epsilon A/2}.$$

For $t\geq 2t_0$, we apply Lemma \ref{lemBKY8.1}. We split everything into dyadic intervals $[\alpha,2\alpha]$ with $\alpha\geq2t_0$. We use the constants
\begin{align*}
X&=\frac{K^2}\alpha, &U&=\alpha,\\
Y&=1\gg\frac b\alpha, &Q&=\alpha,\\
R&=\pi a\leq2\pi\left(a-\frac b{\alpha^2}\right).
\end{align*}

Lemma \ref{lemBKY8.1} gives us
\begin{align*}
\int_\alpha^{2\alpha}e\left(at+\frac bt\right)w_\pm(ct)\frac{dt}t\ll_AK^2(\pi a\alpha)^{-A}\ll_A K^2\cdot K^{-3\epsilon A}\alpha^{-A}.
\end{align*}

This can be summed for a dyadic decomposition of $[2t_0,\infty[$ to get
$$\int_{2t_0}^\infty e\left(at+\frac bt\right)w_\pm(ct)\frac{dt}t\ll_A K^2\cdot K^{-3\epsilon A}\sum_{j=\lfloor\log_2(2t_0)\rfloor}^\infty 2^{-jA}\ll_A K^2\cdot K^{-(3\epsilon/2-3\epsilon)A}.$$

Combining both estimates, we have
$$\int_0^\infty e\left(at+\frac bt\right)w_\pm(ct)\frac{dt}t\ll_A K^{2-\epsilon A/2}.$$
\end{proof}

\begin{remark}
In our case, we have $a=(s_1\pm s_2)^2$, $b=\sin(\alpha)^2$ and $c=4\pi s_1s_2$. Using the various estimates coming from Equations (\ref{eqcutoff}) and (\ref{equFirstBoundNormC}), we see that, up to a negligible error, $a\ll K^{3\epsilon}$. Note that $(s_1+s_2)^2\geq4s_1s_2\gg K^{2-\epsilon}$. So the term with this sign is always negligible.
\end{remark}

\subsection{Size of the $T$, $Q$ and $C$ sums}
We are left to analyze the case $a=(s_1-s_2)^2\ll K^\epsilon$. We changed the value of $\epsilon$ here. In this section and the next ones, we may change again the value of $\epsilon$ from one display to the other. We only do this if the new $\epsilon$ is only a constant multiple of the old one.

The goal of this section is to see which $T$, $Q$ and $C$ satisfy the bound $(s_1-s_2)\ll K^\epsilon$. Note first that if $\lambda_1\geq\lambda_2$ are the two eigenvalues of $M=TC^{-1}QC^{-t}$, then
$$\lambda_1-\lambda_2=s_1^2-s_2^2=(s_1-s_2)(s_1+s_2)\ll K^{1+2\epsilon}.$$

This comes from the fact that $K^{2-\epsilon}\ll s_1s_2\ll K^{2+\epsilon}$, so that $K^{1-\epsilon}\ll s_1+s_2\asymp s_1\asymp s_2\ll K^{1+\epsilon}$. We fix some notations for this section:
\begin{align*}
T&=\begin{pmatrix}a&b\\b&c\end{pmatrix},&
Q&=\begin{pmatrix}x&y\\y&z\end{pmatrix},&
C^{-1}&=(c_{ij}),\\
\tilde Q=C^{-1}QC^{-t}&=\begin{pmatrix}\tilde x&\tilde y\\ \tilde y&\tilde z\end{pmatrix},&
M&=T\tilde Q=TC^{-1}QC^{-t},&
M&=(m_{ij}).
\end{align*}

Note that all numbers are integers or half-integers except for $c_{ij}\in \frac1{\det(C)}\mathbb Z$ and for $\tilde x, \tilde y, \tilde z\in\frac1{2\det(C)^2}\mathbb Z$. But since $|\det(C)|\ll K^\epsilon$, this only creates a negligible difference in terms of estimates for distances between coordinates. So we treat them as if they were integers in the rest of the argument and point out where the difference occurs. Recall also that $T$ and $Q$ are reduced, so $2|b|\leq a\leq c$ and $2|y|\leq x\leq z$. In particular, $K^{2-\epsilon}\ll ac\asymp\det(T)\ll K^{2+\epsilon}$ and similarly for $Q$. Consider $(\lambda_1-\lambda_1)^2$ the square of the difference between the two eigenvalues of $M$. By the quadratic formula, this corresponds to the discriminant of the characteristic polynomial of $M$. We have
$$K^{2+\epsilon}\gg(\lambda_1-\lambda_2)^2=\tr(M)^2-4\det(M)=(m_{11}-m_{22})^2+4m_{12}m_{21}.$$

Inserting the values of the product $T\tilde Q$, we get
$$\Delta=(a\tilde x-c\tilde z)^2+4(a\tilde y+b\tilde z)(b\tilde x+c\tilde y).$$

We rearrange the second term. Completing the squares with respect to $\tilde y$, we have
\begin{align*}
4(a\tilde y+b\tilde z)(b\tilde x+c\tilde y)&=4ac\tilde y^2+4b\tilde y(a\tilde x+c\tilde z)+4b^2\tilde x\tilde z\\
 &=\left(2\sqrt{ac}\tilde y+b\frac{a\tilde x+c\tilde z}{\sqrt{ac}}\right)^2-b^2\frac{(a\tilde x+c\tilde z)^2}{ac}+4b^2\tilde x\tilde z\\
 &=\frac1{ac}(2ac\tilde y+b(a\tilde x+c\tilde z))^2-\frac{b^2}{ac}(a\tilde x-c\tilde z)^2.
\end{align*}

We can do a similar computation by completing the square on $b$. We get
\begin{align}
 \Delta&=(a\tilde x-c\tilde z)^2\left(1-\frac{b^2}{ac}\right)+\frac1{ac}(2ac\tilde y+b(a\tilde x+c\tilde z))^2,\label{equsDelta1}\\
 \Delta&=(a\tilde x-c\tilde z)^2\left(1-\frac{\tilde y^2}{\tilde x\tilde z}\right)+\frac1{\tilde x\tilde z}(2\tilde x\tilde zb+\tilde y(a\tilde x+c\tilde z))^2.\label{equsDelta2}
\end{align}

Since $T$ is reduced, $\frac{b^2}{ac}\leq\frac14$. We also have $\tilde x\tilde z>\tilde y^2$ because $\det(\tilde Q)>0$. So all the squares in Equations (\ref{equsDelta1}) and (\ref{equsDelta2}) must be bounded by $K^{2+\epsilon}$. For the first square, we get
\begin{align}\label{equaxcz}
 a\tilde x-c\tilde z\ll K^{1+\epsilon}.
\end{align}

In particular $K^{2-\epsilon}\ll a\tilde x\sim c\tilde z\ll K^{2+\epsilon}$, i.e. $a\tilde x$ and $c\tilde z$ are of the same size. This is because the product of the terms is of size $\det(T\tilde Q)\gg K^{4-\epsilon}$ (by Equation (\ref{eqcutoff}) and considerations at the beginning of this section). Using that $a\leq c$ and $K^{2-\epsilon}\ll ac\ll K^{2+\epsilon}$, this equation also gives
$$\tilde z=\frac ac\tilde x+O\left(\frac{K^{1+\epsilon}}c\right)\leq\tilde x+O(K^{2\epsilon}).$$

We introduce the notation: $\tilde z\lesssim\tilde x\Leftrightarrow\tilde z\leq\tilde x+O(K^\epsilon)$ as $K\to\infty$. Equation (\ref{equaxcz}) allows us to rearrange the right square:
$$K^{2+\epsilon}\gg2ac\tilde y+b(a\tilde x+c\tilde z)\sim2c(a\tilde y+b\tilde z)\sim2a(c\tilde y+b\tilde x).$$

This gives the two other relations
\begin{align}
 a\tilde y+b\tilde z&\ll\frac{K^{2+\epsilon}}c,\label{equaybz}\\
 c\tilde y+b\tilde x&\ll\frac{K^{2+\epsilon}}a.\label{equcybz}
\end{align}

In particular, we have $\tilde y=-\tilde z\frac ba+O(K^\epsilon)$. Using the relation $2|b|\leq a$, we get $2|\tilde y|\lesssim\tilde z\lesssim\tilde x$. So $\tilde Q$ is almost in a "reversed" reduced form and in particular $K^{2-\epsilon}\ll\tilde x\tilde z\asymp\det(\tilde Q)\asymp\frac{xz}{\det(C^2)}\ll K^{2+\epsilon}$.

\begin{lemma}\label{lemNormC}
Let $\epsilon>0$ and $K\in2\mathbb N$. Let $T,Q\in\mathcal P(\mathbb Z)$ such that $K^{4-\epsilon}\ll\det(TQ)\ll K^{4+\epsilon}$ and $\det(T)-\det(Q)\ll K^{3/2+\epsilon}$, and $C\in M_2(\mathbb Z)$ such that $0\neq\det(C)\ll K^\epsilon$ and $\Vert C\Vert\ll K^{2+\epsilon}$. If
$$\Vert C\Vert\gg K^{2\epsilon}$$
then the integral I in Equation (\ref{boundTrivialGenBessel}) satisfies
$$I\ll_A K^{-A},$$
i.e. $\Vert C\Vert\ll K^{2\epsilon}$ up to a negligible error.
\end{lemma}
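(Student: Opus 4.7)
The plan is to argue by contradiction: suppose $\|C\|\gg K^{2\epsilon}$ but $I\not\ll_A K^{-A}$. By the preceding lemma the latter forces $(s_1-s_2)^2\ll K^{3\epsilon}$ (the $(s_1+s_2)^2$ case being always negligible), and since $s_1+s_2\asymp K^{1\pm\epsilon}$ this is equivalent to $\Delta^2=(\lambda_1-\lambda_2)^2\ll K^{2+5\epsilon}$. Feeding this back into (\ref{equsDelta1}) and (\ref{equsDelta2}), the reducedness of $T$ (through $1-b^2/(ac)\geq 3/4$) sharpens (\ref{equaxcz})--(\ref{equcybz}) by a factor of $K^{O(\epsilon)}$ and yields in particular the tight off-diagonal relation $\tilde y=-b+O(K^{3\epsilon})$.

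The key additional input is cyclic symmetry: the product $M=T\tilde Q$ has the same characteristic polynomial as $M':=\tilde TQ$ with $\tilde T:=C^{-t}TC^{-1}$, which is positive definite. Applying the derivation of (\ref{equaxcz})--(\ref{equcybz}) to $\tilde T\cdot Q$---now using the reducedness of $Q$ in the role previously played by that of $T$---produces the dual system
\begin{equation*}
\tilde a\asymp K^2/x,\quad \tilde c\asymp K^2/z,\quad \tilde ax-\tilde cz\ll K^{1+3\epsilon},\quad \tilde b=-y+O(K^{3\epsilon}).
\end{equation*}
Writing $C^{-1}=(p_{ij})$ with rows $r_1,r_2$ and columns $c_1,c_2$, the diagonal entries $\tilde x=r_1^tQr_1$, $\tilde z=r_2^tQr_2$, $\tilde a=c_1^tTc_1$, $\tilde c=c_2^tTc_2$ combined with the extremal eigenvalue estimates $\lambda_{\min/\max}(Q)\asymp x,z$ and $\lambda_{\min/\max}(T)\asymp a,c$ yield two-sided bounds
\begin{equation*}
\|r_i\|^2\in[K^{2-\epsilon}/(a_iz),\,K^{2+\epsilon}/(a_ix)],\qquad \|c_j\|^2\in[K^{2-\epsilon}/(cx_j),\,K^{2+\epsilon}/(ax_j)],
\end{equation*}
with $(a_1,a_2)=(a,c)$ and $(x_1,x_2)=(x,z)$. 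The off-diagonal relations $\tilde y\approx -b$ and $\tilde b\approx -y$ pin down the mixed inner products $r_1^tQr_2$ and $c_1^tTc_2$.

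The goal is then to conclude $\|C^{-1}\|\ll K^\epsilon$, whence $\|C\|=|\det(C)|\cdot\|C^{-1}\|\ll K^{2\epsilon}$ by $|\det(C)|\ll K^\epsilon$, contradicting the hypothesis. The main obstacle is precisely this last step: the row and column ranges above have width $\asymp z/x$ or $c/a$, which can be as large as $K$, so the desired bound cannot come from them in isolation. One must combine the row estimates, the column estimates, and the off-diagonal pinning \emph{simultaneously}, and use the integrality $\det(C)\cdot p_{ij}\in\mathbb Z$ with $|\det(C)|\ll K^\epsilon$. Heuristically, two integer vectors that are almost proportional in both the $T$-inner product and the $Q$-inner product must be exactly proportional up to a negligible error, which collapses the apparent freedom in $C$ to the regime $\|C\|\ll K^{2\epsilon}$.
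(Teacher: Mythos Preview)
Your proposal is incomplete by your own admission: the final step---passing from the row/column norm windows and the off-diagonal ``pinning'' to $\|C^{-1}\|\ll K^{\epsilon}$---is stated only as a heuristic, and the windows you derive have width that can be as large as $K^{1\pm\epsilon}$, so something further is genuinely needed. There is also a slip earlier: from (\ref{equaybz}) one gets $\tilde y=-\tfrac{b}{a}\tilde z+O(K^{O(\epsilon)})$, and at this point one only knows $a\asymp\tilde z$ up to $K^{O(\epsilon)}$ (not $a=\tilde z+O(1)$), so the claimed sharp relation $\tilde y=-b+O(K^{3\epsilon})$ is not available.

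The paper's argument is far more elementary and avoids the cyclic-symmetry detour entirely. Writing $C^{-1}=(c_{ij})$ and expanding $\tilde Q=C^{-1}QC^{-t}$ explicitly, reducedness of $Q$ gives
\[
\tilde x\asymp xc_{11}^2+zc_{12}^2,\qquad \tilde z\asymp xc_{21}^2+zc_{22}^2.
\]
One then does a three-case split on whether $c_{12}c_{22}\neq 0$, $c_{12}=0$, or $c_{22}=0$. In the first two cases the already-established ``almost reduced'' relation $\tilde z\lesssim\tilde x$ and the determinant constraint $\tilde x\tilde z\asymp xz/\det(C)^2$ force $x\asymp z$, after which $\|C^{-1}\|^2\ll\det(C)^{-2}\ll 1$ drops out immediately. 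In the third case the determinant constraint bounds $c_{12},c_{21}\ll 1$, and the relation $2|\tilde y|\lesssim\tilde z$ applied to the explicit formula $\tilde y=c_{21}(c_{11}x+c_{12}y)$ bounds $c_{11}$. No dual system for $\tilde T$ and no ``simultaneous'' combination of four constraint families is needed; the whole proof is a few lines of case analysis on the entries of $C^{-1}$.
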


\begin{proof}
Following the hypothesis, we see that $C^{-1}$ has coefficients in $\frac1{\det(C)}\mathbb Z$ and $|\det(C^{-1})|\ll1$. Therefore $\det(C)\Vert C^{-1}\Vert=\Vert C\Vert$ for the $\infty$-norm on $M_2(\mathbb R)$. So it is equivalent to prove that $\Vert C^{-1}\Vert\ll K^\epsilon$. We can use the results of this subsection and the last.

The proof relies on the numbers of non-zero entries in $C^{-1}$. Because $\det(C)\neq0$, there are at most two zeros and in that last case, $C^{-1}$ is diagonal or anti-diagonal. Since $\det(C^{-1})\ll1$, the result is obvious in this case. Computing the product $\tilde Q=C^{-1}QC^{-t}$, we have
\begin{align*}
\tilde x&=xc_{11}^2+2yc_{11}c_{12}+zc_{12}^2\\
\tilde z&=xc_{21}^2+2yc_{21}c_{22}+zc_{22}^2.
\end{align*}

The matrix $Q$ is reduced, therefore we have $2|y|c_{11}c_{12}\leq |y|(c_{11}^2+c_{12}^2)\leq\frac12xc_{11}^2+\frac12zc_{12}^2$ and the same for the second equation. We get
\begin{align*}
 \tilde x&\asymp xc_{11}^2+zc_{12}^2,\\
 \tilde z&\asymp xc_{21}^2+zc_{22}^2.
\end{align*}

If $c_{12}c_{22}\neq0$, then we have $\tilde x,\tilde z\gg z\geq x$. Therefore $xz\leq z^2\ll \tilde x\tilde z\asymp\frac{xz}{\det(C)^2}\ll xz$. We deduce that $x\asymp z$ and we must have $\tilde x\asymp\Vert C^{-1}\Vert^2x$ or $\tilde z\asymp\Vert C^{-1}\Vert^2x$. Then $\frac{xz}{\det(C)^2}\asymp\tilde x\tilde z\gg\Vert C^{-1}\Vert^2xz$ and so $\Vert C^{-1}\Vert^2\ll\det(C)^{-2}\ll1$.

If $c_{12}=0$, then we have $\tilde x\gtrsim\tilde z\gg z\geq x$. So $\frac{xz}{\det(C)^2}\asymp\tilde x\tilde z\gg z(z+O(K^\epsilon))$ and $z\geq x\gg z+O(K^\epsilon)$. Therefore $z\asymp x$ and we can finish as above.
 
The last case is $c_{22}=0$. We have $\frac{xz}{\det(C)^2}\asymp\tilde x\tilde z\gg c_{12}^2c_{21}^2xz$ so $c_{12},c_{21}\ll1$ and $\tilde z\asymp xc_{21}^2\asymp x$. For $\tilde y$, we have
$$\tilde y=c_{11}c_{21}x+(c_{11}c_{22}+c_{12}c_{21})y+c_{12}c_{22}z=c_{21}(c_{11}x+c_{12}y).$$

Let suppose that $c_{11}\gg K^{2\epsilon}$, so that
$$\tilde z+O(K^\epsilon)\gg|\tilde y|\asymp c_{21}c_{11}x\gg K^{2\epsilon}c_{21}x\asymp K^{2\epsilon}\tilde z.$$

This is a contradiction. Therefore $\Vert C\Vert\ll K^{2\epsilon}.$
\end{proof}

The next lemma is a way to decouple the relationship between the variables.

\begin{lemma}
Let $\epsilon>0$ and $K\in2\mathbb N$. Let $T,Q\in\mathcal P(\mathbb Z)$ such that $K^{4-\epsilon}\ll\det(TQ)\ll K^{4+\epsilon}$ and $\det(T)-\det(Q)\ll K^{3/2+\epsilon}$, and $C\in M_2(\mathbb Z)$ such that $\det(C)\neq0$ and $\Vert C\Vert\ll K^{\epsilon}$. If
$$ac-\det(C)^2\tilde x\tilde z\gg K^{3/2+\epsilon}$$
or
$$a-\det(C)\tilde z\gg K^{-1/2+\epsilon}\tilde z$$
then the integral I in Equation (\ref{boundTrivialGenBessel}) satisfies
$$I\ll_A K^{-A},$$
that is, up to a negligible error,
\begin{align}
ac&=\det(C)^2\tilde x\tilde z+O(K^{3/2+\epsilon})\label{equacC2xz},\\
a&=\det(C)\tilde z+O(K^{-1/2+\epsilon}\tilde z)\label{equaz}.
\end{align}
\end{lemma}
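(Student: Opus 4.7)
The plan is to argue the contrapositive: assume that $I$ is not $O_A(K^{-A})$, and deduce that both (\ref{equacC2xz}) and (\ref{equaz}) must hold. By the preceding lemma, non-negligibility of $I$ forces $(s_1-s_2)^2\ll K^{\epsilon}$; together with $s_1+s_2\asymp K$, this gives
\[
\Delta:=(\lambda_1-\lambda_2)^2 = (s_1-s_2)^2(s_1+s_2)^2 \ll K^{2+\epsilon},
\]
where $\lambda_1,\lambda_2$ denote the eigenvalues of $M:=T\tilde Q$. Using the two decompositions (\ref{equsDelta1}) and (\ref{equsDelta2}) --- in both cases each summand is non-negative, and the coefficients $1-b^2/(ac)$ and $1-\tilde y^2/(\tilde x\tilde z)$ are bounded below by $3/4$ thanks to the reducedness of $T$ and the inequality $2|\tilde y|\lesssim\tilde z\leq\tilde x$ established earlier --- one extracts the entry-wise bounds
\[
|a\tilde x-c\tilde z|\ll K^{1+\epsilon},\qquad|a\tilde y+b\tilde z|\ll K^{2+\epsilon}/c,\qquad |b\tilde x+c\tilde y|\ll K^{2+\epsilon}/a,
\]
after rewriting $a\tilde x+c\tilde z=2c\tilde z+O(K^{1+\epsilon})=2a\tilde x+O(K^{1+\epsilon})$.

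The key step is the observation that $M$ is then close to a scalar matrix: $M=\lambda I+E$ with $\lambda=\tr(M)/2$ and every entry of $E$ of size $\ll K^{1+\epsilon}$ (the off-diagonals of $M$ and the difference of its diagonal entries are all controlled by the bounds above). Left-multiplying by $T^{-1}=\det(T)^{-1}\bigl(\begin{smallmatrix}c&-b\\-b&a\end{smallmatrix}\bigr)$ gives $\tilde Q=\lambda T^{-1}+T^{-1}E$, and an entry-wise computation using $|b|\leq a/2$ yields
\[
\tilde z=\frac{\lambda a}{\det(T)}+O\!\left(\frac{K^{1+\epsilon}}{c}\right),\qquad \tilde y=-\frac{\lambda b}{\det(T)}+O(K^{\epsilon}),
\]
with both error terms $O(K^{2\epsilon})$ since $c\gg K^{1-\epsilon}$. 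To determine $\lambda$, I would combine $\lambda^2=\det(M)+\Delta/4=\det(T)\det(\tilde Q)+O(K^{2+\epsilon})$ with $\det(\tilde Q)=\det(Q)/\det(C)^2$ and the cut-off estimate $\det(Q)=\det(T)(1+O(K^{-1/2+\epsilon}))$, obtaining $\lambda=\det(T)/|\det(C)|\cdot(1+O(K^{-1/2+\epsilon}))$.

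Substituting back yields (\ref{equaz}) in the form
\[
|\det(C)|\,\tilde z=a\,(1+O(K^{-1/2+\epsilon}))+O(K^{2\epsilon}),
\]
and analogously $|\det(C)|\,\tilde y=-b+O(K^{-1/2+\epsilon}|b|)+O(K^{2\epsilon})$. For (\ref{equacC2xz}), use the identity
\[
ac-\det(C)^2\tilde x\tilde z=\bigl(b^2-\det(C)^2\tilde y^2\bigr)+\bigl(\det(T)-\det(Q)\bigr),
\]
together with $\det(T)-\det(Q)\ll K^{3/2+\epsilon}$ from the cut-off of $V$. Factoring $b^2-\det(C)^2\tilde y^2=(b-\det(C)\tilde y)(b+\det(C)\tilde y)$, bounding the first factor trivially by $\ll a$ and the second by $\ll K^{-1/2+\epsilon}|b|+K^{2\epsilon}$ from the step above, and using $a^2\leq ac\ll K^{2+\epsilon}$, the product is $\ll K^{3/2+\epsilon}$.

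The delicate point will be the careful bookkeeping of error terms, especially ensuring that the additive $O(K^{2\epsilon})$ appearing in the estimate for $\tilde z$ is compatible with the relative bound (\ref{equaz}) in the regime when $a$ is small. Since both $a$ and $|\det(C)|\tilde z$ lie in a fixed fractional lattice with denominator $\ll K^{\epsilon}$, agreement up to $O(K^{2\epsilon})$ pins $|\det(C)|\tilde z$ down to $O(K^{3\epsilon})$ candidate values, and any remaining corner cases can be absorbed into the subsequent counting argument.
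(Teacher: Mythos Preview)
Your route via $M=\lambda I+E$ is genuinely different from the paper's and is conceptually clean. The paper never introduces $\lambda$; instead it multiplies the two off-diagonal bounds (\ref{equaybz}) and (\ref{equcybz}) to obtain $\tilde y^2=\frac{\tilde x\tilde z}{ac}b^2+O(K^{O(\epsilon)})$, inserts this into $\det(T)=\det(Q)+O(K^{3/2+\epsilon})$ to get $(ac-b^2)\bigl(1-\det(C)^2\frac{\tilde x\tilde z}{ac}\bigr)=O(K^{3/2+\epsilon})$, hence (\ref{equacC2xz}), and only then combines (\ref{equacC2xz}) linearly with (\ref{equaxcz}) to derive (\ref{equaz}). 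Your derivation of (\ref{equacC2xz}) via the identity $ac-\det(C)^2\tilde x\tilde z=(b^2-\det(C)^2\tilde y^2)+(\det(T)-\det(Q))$ and the factorization of $b^2-\det(C)^2\tilde y^2$ is correct and arguably more transparent. (Two small remarks: you do not need $1-\tilde y^2/(\tilde x\tilde z)\geq 3/4$, which is not available at that stage---only positivity is, and only (\ref{equsDelta1}) is required; also write $|\det(C)|$ throughout.)

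Where your argument falls short is precisely the ``delicate point'' you flag. Extracting $\tilde z$ from $\tilde Q=\lambda T^{-1}+T^{-1}E$ gives $|\det(C)|\tilde z=a(1+O(K^{-1/2+\epsilon}))+O(K^{O(\epsilon)})$, and that additive $O(K^{O(\epsilon)})$ is a genuine loss over (\ref{equaz}) when $\tilde z$ (equivalently $a$) is small. The lattice remark does not rescue this: $a$ and $|\det(C)|\tilde z$ lie in a $K^{\epsilon}$-lattice, so an $O(K^{O(\epsilon)})$ uncertainty still leaves $\gg K^{\epsilon}$ candidates, not $O(1)$, so you have not proved the lemma as stated. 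The cheap fix is to reuse what you already have: once you have established (\ref{equacC2xz}), take the linear combination $a\cdot(ac-\det(C)^2\tilde x\tilde z)+\det(C)^2\tilde z\cdot(a\tilde x-c\tilde z)$ to obtain $c\bigl(a^2-\det(C)^2\tilde z^2\bigr)\ll K^{3/2+\epsilon}a+K^{1+\epsilon}\tilde z$, divide by $c$, and factor; since $c\tilde z,ac\gg K^{2-\epsilon}$ the right-hand side becomes $\ll K^{-1/2+\epsilon}a^2+K^{-1+\epsilon}\tilde z^2$, which forces $a\asymp\tilde z$ and yields (\ref{equaz}) with no additive loss. This is exactly the paper's last step, and it grafts onto your argument without change.
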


\begin{proof}
We know that
\begin{align}
\det(T)&=\det(Q)+O(K^{3/2+\epsilon})\nonumber\\
\Leftrightarrow ac-b^2&=\det(C)^2(\tilde x\tilde z-\tilde y^2)+O(K^{3/2+\epsilon}).\label{equDet}
\end{align}

We want to simplify this using the other equations in this section. We multiply the Equations (\ref{equaybz}) and (\ref{equcybz}) together.
\begin{align*}
(ac)^2\tilde y^2&=(c\tilde zb+O(K^{2+\epsilon}))(a\tilde xb+O(K^{2+\epsilon}))\\
 &=ac\tilde x\tilde zb^2+O(bK^{2+\epsilon}(a\tilde x+c\tilde z)+K^{4+2\epsilon}),\\
\Rightarrow\tilde y^2&=\frac{\tilde x\tilde z}{ac}\tilde b^2+O(bK^{4\epsilon}+K^{2\epsilon}).
\end{align*}

We simplified the big O term using Equation (\ref{equaxcz}) that tells us that $K^{2-\epsilon}\ll a\tilde x\asymp c\tilde z\ll K^{2+\epsilon}$ and $ac\gg K^{2-\epsilon}$. Inserting this result in Equation (\ref{equDet}), we get
\begin{align*}
ac-b^2&=\det(C)^2(\tilde x\tilde z-\tilde y^2)+O(K^{3/2+\epsilon})\\
 &=\det(C)^2\left(\tilde x\tilde z-\frac{\tilde x\tilde z}{ac}b^2+O(bK^\epsilon+K^{\epsilon})\right)+O(K^{3/2+\epsilon})\\
 &=\det(C)^2\frac{\tilde x\tilde z}{ac}(ac-b^2)+O(K^{3/2+\epsilon}).
\end{align*}

We have $0\neq ac-b^2\asymp ac$. Multiplying by $\frac{ac}{ac-b^2}$, we get
\begin{align*}
ac&=\det(C)^2\tilde x\tilde z+O(K^{3/2+\epsilon}).\
\end{align*}

By subtraction of Equation (\ref{equDet}), we also get $b^2=\det(C)^2y^2+O(K^{3/2+\epsilon})$. Combining Equations (\ref{equaxcz}) and (\ref{equacC2xz}) and recalling that $ac,c\tilde z\gg K^{2-\epsilon}$ (Equations (\ref{eqcutoff}) and (\ref{equaxcz})), we have
\begin{align*}
\det(C)^2\tilde z(a\tilde x-c\tilde z)&\ll K^{1+\epsilon}\det(C)^2\tilde z,\\
\makebox[0pt][l]{\underline{\phantom{$+\quad a(ac-\det(C)^2\tilde x\tilde z)\ll K^{3/2}a a loooooooonger line$}}} 
+\quad a(ac-\det(C)^2\tilde x\tilde z)&\ll K^{3/2+\epsilon}a,\\
ca^2-\det(C)^2c\tilde z^2&\ll K^{1+\epsilon}\det(C)^2\tilde z+K^{3/2+\epsilon}a,\\
a^2-\det(C)^2\tilde z^2&\ll K^{1+3\epsilon}\frac{\tilde z}c+K^{3/2+\epsilon}\frac ac\ll K^{-1+4\epsilon}\tilde z^2+K^{-1/2+2\epsilon}a^2,\\
a-\det(C)\tilde z&\ll K^{-1/2+2\epsilon}a\asymp K^{-1/2+3\epsilon}\tilde z.
\end{align*}

The last equation comes from the observation that, on the line above, the two bounds on the right are smaller than a term on the left. Therefore the two terms must be of the same size. We can then factorize the left-hand side and simplify.
\end{proof}

\begin{remark}
Similarly, we can prove that $c-\det(C)\tilde x\ll K^{-1/2+\epsilon}\tilde x$ up to a negligible error.
\end{remark}

\subsection{Estimate of the rank 2 term}
First, we analyze the sum coming from the Fourier series and the spectral integral. Each term in the $T,Q$-sum has the following shape. We suppose for the following argument that $C$ is fixed.
\begin{align*}
\frac1{\epsilon(T)\epsilon(Q)\det(TQ)^{3/4}}&\int_0^{\pi/2}\int_0^\infty e\left((s_1-s_2)^2t+\frac{\sin(\alpha)^2}t\right)\int_{\Lambda_{\ev}}w_-(\pi s_1s_2t)\phi(z_T)\bar\phi(z_Q)d\phi\,\frac{dt}t\sin(\alpha)d\alpha\\
	&\ll\det(TQ)^{-3/4}\cdot K^\epsilon\cdot K^{2}\cdot K^{1/2+\epsilon}\frac{\det(TQ)^{1/4}}{\sqrt{ax}}\\
	&\ll K^{1/2+3\epsilon}\frac1{\sqrt{ax}}
\end{align*}

We used the following estimates. Recall that $\det(T),\det(Q)\gg K^{2-\epsilon}$ and $\epsilon(T),\epsilon(Q)\ll1$ up to a negligible error. The integrals over $t$ and $\alpha$ are of size $K^\epsilon$, as seen in Equation (\ref{boundTrivialGenBessel}) (with $K^2$ being the size of $w_\pm$). The spectral integral is bounded using Lemma \ref{lemBoundNonDiagSpect}.

Now, we count the number of $T$ and $Q$ using the cut-off we computed. First, we fix $Q$. This also fix $\tilde Q$ since $C$ is fixed. We fix $a$, $c$ and $b$ in this order. Equations (\ref{equaz}), (\ref{equaxcz}) and (\ref{equaybz}) give respectively
\begin{align*}
a&=\det(C)\tilde z+O\left(\frac{\tilde z}{K^{1/2-\epsilon}}\right),\\
c&=\frac{a\tilde x}{\tilde z}+O\left(\frac{K^{1+\epsilon}}{\tilde z}\right),\\
b&=\frac{a\tilde y}{\tilde z}+O\left(\frac{K^{2+\epsilon}}{c\tilde z}\right).
\end{align*}

Note that in the big O of the first equation, the fraction can be smaller than 1. We also know that $c\tilde z\gg K^{2-\epsilon}$. Therefore to fix $T$, we have
$$O\left(\left(\frac{\tilde z}{K^{1/2-\epsilon}}+1\right)\cdot \frac{K^{1+\epsilon}}{\tilde z}\cdot \frac{K^{2+\epsilon}}{c\tilde z}\right)=O\left(\left(K^{-1/2+\epsilon}+\frac1{\tilde z}	\right)K^{1+3\epsilon}\right)$$
possible choices. We use the divisor bound, Equation (\ref{equaz}), $\tilde z\ll K^{1+\epsilon}$ and $a,x\gg1$ to get that the $T,Q$-sum is of size
\begin{align*}
\sum_{K^{2-\epsilon}\ll\tilde x\tilde z\ll K^{2+\epsilon}}&\sum_{2|\tilde y|\ll\tilde z+O(K^\epsilon)}\left(K^{-1/2+\epsilon}+\frac1{\tilde z}\right)K^{1+\epsilon}\cdot K^{1/2+\epsilon}\\
	&\ll\sum_{K^{2-\epsilon}\ll\tilde x\tilde z\ll K^{2+\epsilon}}\left(\frac{\tilde z}{K^{1/2-\epsilon}}+1\right)K^{3/2+3\epsilon}\\
	&\ll K^{2+\epsilon}\cdot K^{1/2+2\epsilon}\cdot K^{1+4\epsilon}\\
	&\ll K^{3.5+7\epsilon}.
\end{align*}

We have estimated the $T,Q$-sum, as always up to a negligible error. Now we combine this with other estimates to bound the term of rank 2. Note that there are $O(K^\epsilon)$ choices for $C$ by Lemma \ref{lemNormC}. We get 
\begin{align*}
\frac{96\pi^3}{\omega K^4}&\sum_{T,Q\in\mathcal P(\mathbb Z)/\PSL_2(\mathbb Z)}\frac1{\epsilon(T)\epsilon(Q)\det(TQ)^{3/4}}\sum_{\substack{\Vert C\Vert\ll1\\\det(C)\neq0}}{|\det(C)|^{3/2}}\int_0^{\pi/2}\int_0^\infty e\left((s_1-s_2)^2t+\frac{\sin(\alpha)^2}t\right)\\
	&\quad\cdot \int_{\Lambda_{\ev}}w_-(\pi s_1s_2t)\phi(z_T)\bar\phi(z_Q)d\phi\,\frac{dt}t\sin(\alpha)d\alpha\\
	&\ll K^{-4}\cdot K^\epsilon\cdot K^{3.5+\epsilon}\\
	&\ll K^{-1/2+2\epsilon}.
\end{align*}

This proves the bound on the rank two term. Together with the results of Sections \ref{section4} and \ref{section5}, it concludes the proof of Theorem \ref{thmNav}.

\appendix
\section{Automorphism of binary quadratic forms}\label{AppAutomorphisms}
The goal of this appendix is to compute all the automorphisms in $\GL_2(\mathbb Z)$ of a binary quadratic form. We set
\begin{align*}
Q&=\begin{pmatrix}x&y\\y&z\end{pmatrix}
&M&=\begin{pmatrix}a&b\\c&d\end{pmatrix}.
\end{align*}

Here $Q$ is a (weakly) reduced integral quadratic form, that is $x,z\neq0$, $2|y|\leq x\leq z$, $2y,x,z\in\mathbb Z$ and $\det(Q)>0$, and $M\in\GL_2(\mathbb Z)$. We are looking for the couples $(Q,M)$ such that
$$Q=M^tQM.$$

Note first that if we replace $M$ by $-M$, we get the same result. Therefore we only consider matrices up to multiplication by $\pm1$. The computation gives
\begin{align}\label{AppEquMatrixAutomorphisms}
0=M^tQM-Q=\begin{pmatrix}
a^2x+2acy+c^2z-x &abx+(ad+bc)y+cdz-y\\
abx+(ad+bc)y+cdz-y &b^2x+2bdy+d^2z-z
\end{pmatrix}.
\end{align}

We consider the first entry. Using the identity $u^2+v^2\geq2|uv|$, we have
$$0=a^2x+2acy+c^2z-x\geq2|ac|(\sqrt{xz}-|y|)-x\geq|ac|x-x.$$

Therefore we have $|ac|\leq1$. We have to work a bit more for the last entry. Suppose that $|d|\geq2$. Then $d^2-1\geq\frac34d^2$ and so
$$0=b^2x+2bdy+(d^2-1)z\geq2|b|\sqrt{d^2-1}\sqrt{xz}-2|bdy|\geq2|bd|(\sqrt{3/4}\sqrt{xz}-|y|).$$

Since $\sqrt{3/4}>1/2$, we have $b=0$. Therefore we have two cases: $|d|\leq1$ or $b=0$.

\subsection{Diagonal and antidiagonal $M$}
We begin with the two easy cases of diagonal and antidiagonal matrix $M$. There are 4 possibilities up to multiplication by $-1$:
$$M=\begin{pmatrix}1&0\\0&1\end{pmatrix},\begin{pmatrix}1&0\\0&-1\end{pmatrix},\begin{pmatrix}0&1\\-1&0\end{pmatrix},\begin{pmatrix}0&1\\1&0\end{pmatrix}.$$

The identity is an automorphism for any matrix $Q$. Looking at Equation (\ref{AppEquMatrixAutomorphisms}), we get respectively for the other three matrices
$$0=\begin{pmatrix}0&-2y\\-2y&0\end{pmatrix},\begin{pmatrix}z-x&-2y\\-2y&x-z\end{pmatrix},\begin{pmatrix}x-z&0\\0&z-x\end{pmatrix}.$$

Therefore the conditions on $Q$ are respectively $y=0$, $x=z\wedge y=0$ and $x=z$.

\subsection{Diagonal $Q$}
We quickly consider the case $y=0$, so we can rule out this later. Equation (\ref{AppEquMatrixAutomorphisms}) rewrites as
$$0=\begin{pmatrix}a^2x+c^2z-x &abx+cdz\\abx+cdz &b^2x+d^2z-z\end{pmatrix}.$$

First, if $a=0$, then $b,c=\pm1$ since the determinant is $bc=\pm1$. The first entry gives $x=z$ and the second entry gives $d=0$. If $c=0$, then $a,d=\pm1$ and the diagonal entries vanish. The second entry gives $b=0$. In both cases, we are back to a diagonal or antidiagonal $M$. Otherwise, if $ac=\pm1$, then the first entry gives $z=0$ which is a contradiction. So all these cases fit in the last section. From now, we suppose that $y\neq0$

\subsection{The case $ac=0$}
If $c=0$, then automatically $a$ and $d$ equal $\pm1$ since the determinant is $ad$. That gives the matrices
$$M=\begin{pmatrix}1&n\\0&1\end{pmatrix},\begin{pmatrix}1&n\\0&-1\end{pmatrix}$$

for $n$ a non-zero integer. The other cases can be obtained by multiplying by $-1$. Looking at Equation (\ref{AppEquMatrixAutomorphisms}), we have
$$0=\begin{pmatrix}
0 &anx+(ad-1)y\\
anx+(ad-1)y &n^2x+2dny
\end{pmatrix}.$$

So if $ad=1$ like in the first case, then $x=0$ and there is no such $Q$. In the second case, $ad=-1$ and we get $nx=2y$ or $nx+2y=0$. Since $x\geq2|y|$, we get $n=\sgn(y)$ and $x=2|y|$. Now, if $a=0$ then $bc=\pm1$ and we have the matrices
$$M=\begin{pmatrix}0&1\\1&n\end{pmatrix},\begin{pmatrix}0&1\\-1&n\end{pmatrix}.$$

Equation (\ref{AppEquMatrixAutomorphisms}) rewrite as
$$0=\begin{pmatrix}z-x&(bc-1)y+cnz\\(bc-1)y+cnz&x+2bny+(n^2-1)z\end{pmatrix}.$$

If $bc=1$, then $z=0$ and there is no such matrix. Otherwise, $x=z$ and we get the two equations $nx=2y$ and $nx+2y=0$. Again, $x\geq2|y|$ so $n=-\sgn(y)$ and $x=2|y|$.

\subsection{The case $ac=1$}
We have $a=c=\pm1$, without loss of generality say $a=c=1$. Therefore the first entry of the matrix is $2y+z=0$. Since $2|y|\leq x\leq z$, we get $-2y=x=z$. Equation (\ref{AppEquMatrixAutomorphisms}) rewrites as
$$0=\begin{pmatrix}0 &-by-dy-y\\-by-dy-y &-2b^2y+2bdy-2(d^2-1)y\end{pmatrix}.$$

If $b=0$, then the second entry gives $d+1=0$ so $d=-1$ and this is compatible with the last entry. If $b\neq0$, then we have two cases. If $d=0$, then the second equation gives $b=-1$. This is compatible with the last entry. If $d=\pm1$, then the last entry is $-2b^2y+2bdy=0$, so that $b=d$. There is no such matrix with determinant $\pm1$ and it is also incompatible with the second entry.

\subsection{The case $ac=-1$}
We have $a=-c=\pm1$, without loss of generality say $a=-c=1$. So the first entry of Equation (\ref{AppEquMatrixAutomorphisms}) gives $2y=z$. Since $2|y|\leq x\leq z$, we have $2y=x=z$. The full matrix rewrites
$$0=\begin{pmatrix}0 &by-dy-y\\by-dy-y &2b^2y+2bdy+2d^2y-2y\end{pmatrix}.$$

If $b=0$, then the second entry gives $d=-1$ and is compatible with the last. If $b\neq0$, then $d=0$ gives $b=1$ for both equations. If $d=\pm1$, then the last entry is $2b^2y+2bdy=0$ so $b=-d$. This is incompatible with the second entry that says $b=d+1$ (for integral $b$ and $d$).

\subsection{Summary}
We summarize the result in the table below. The first column indicates the sign of the determinant of $M$. For each matrix $M$, there is the matrix $-M$ that has the same action on $Q$. Note that except for the fourth entry, $y$ is always supposed to be non-zero.

$$
\begin{array}{|c|c|c|}
\hline
\det(M) &M &Q\\
\hline
\hline
+&\begin{pmatrix}1&0\\0&1\end{pmatrix} &\text{Any}\\
\hline
-&\begin{pmatrix}1&0\\0&-1\end{pmatrix} &\begin{pmatrix}x&0\\0&z\end{pmatrix}\\
\hline
+&\begin{pmatrix}0&1\\-1&0\end{pmatrix} &\begin{pmatrix}x&0\\0&x\end{pmatrix}\\
\hline
-&\begin{pmatrix}0&1\\1&0\end{pmatrix} &\begin{pmatrix}x&y\\y&x\end{pmatrix}\\
\hline

\hline
-&\begin{pmatrix}1&1\\0&-1\end{pmatrix} &\begin{pmatrix}2y&y\\y&z\end{pmatrix}\\
\hline
-&\begin{pmatrix}1&-1\\0&-1\end{pmatrix} &\begin{pmatrix}2y&-y\\-y&z\end{pmatrix}\\
\hline
+&\begin{pmatrix}0&1\\-1&1\end{pmatrix} &\begin{pmatrix}2y&y\\y&2y\end{pmatrix}\\
\hline
+&\begin{pmatrix}0&1\\-1&-1\end{pmatrix} &\begin{pmatrix}2y&-y\\-y&2y\end{pmatrix}\\
\hline

\hline
-&\begin{pmatrix}1&0\\1&-1\end{pmatrix} &\begin{pmatrix}2y&-y\\-y&2y\end{pmatrix}\\
\hline
+&\begin{pmatrix}1&-1\\1&0\end{pmatrix} &\begin{pmatrix}2y&-y\\-y&2y\end{pmatrix}\\
\hline

\hline
-&\begin{pmatrix}1&0\\-1&-1\end{pmatrix} &\begin{pmatrix}2y&y\\y&2y\end{pmatrix}\\
\hline
+&\begin{pmatrix}1&1\\-1&0\end{pmatrix} &\begin{pmatrix}2y&y\\y&2y\end{pmatrix}\\
\hline
\end{array}
$$

We rewrite this table in terms of $Q$. The second column lists all the automorphisms of $Q$ (modulo $\pm id$). The three following columns indicates respectively the number of automorphisms in $\SL_2(\mathbb Z)$, in $\GL_2(\mathbb Z)$ and the ratio between the two. The number of automorphisms $\epsilon(Q)$ in $\PSL_2(\mathbb Z)$ is just half of the number in $\SL_2(\mathbb Z)$. The last column gives the corresponding Heegner point $z=\frac{-y+i\sqrt{xz-y^2}}{x}$. Here $y\neq0$ everywhere and $y>0$ except in the third row. Recall that if $Q$ is reduced and $x=z$ or $x=2|y|$, we can, furthermore, suppose that $y>0$. This removes the fifth and the seventh rows.

$$
\hspace{-1cm}
\begin{array}{|c|c|c|c|c|c|}
\hline
Q &M &\SL_2(\mathbb Z) &\GL_2(\mathbb Z) &\text{Ratio} &\text{Heegner pt}\\
\hline
\hline
\begin{pmatrix}x&0\\0&z\end{pmatrix}&
\begin{pmatrix}1&0\\0&1\end{pmatrix},
\begin{pmatrix}1&0\\0&-1\end{pmatrix}
&2 &4 &2 &i\sqrt{\frac zx}\\
\hline
\begin{pmatrix}x&0\\0&x\end{pmatrix}&
\begin{pmatrix}1&0\\0&1\end{pmatrix},
\begin{pmatrix}1&0\\0&-1\end{pmatrix},
\begin{pmatrix}0&1\\-1&0\end{pmatrix},
\begin{pmatrix}0&1\\1&0\end{pmatrix}
&4 &8 &2 &i\\
\hline
\begin{pmatrix}x&y\\y&x\end{pmatrix}&
\begin{pmatrix}1&0\\0&1\end{pmatrix},
\begin{pmatrix}0&1\\1&0\end{pmatrix}
&2 &4 &2 &\frac{-y+i\sqrt{x^2-y^2}}x\\
\hline
\begin{pmatrix}2y&y\\y&z\end{pmatrix}&
\begin{pmatrix}1&0\\0&1\end{pmatrix},
\begin{pmatrix}1&1\\0&1\end{pmatrix}
&2 &4 &2 &\frac{-1}2+i\frac{\sqrt{2z-y}}{2\sqrt y}\\
\hline
\begin{pmatrix}2y&-y\\-y&z\end{pmatrix}&
\begin{pmatrix}1&0\\0&1\end{pmatrix},
\begin{pmatrix}1&-1\\0&-1\end{pmatrix}
&2 &4 &2 &\frac{1}2+i\frac{\sqrt{2z-y}}{2\sqrt y}\\
\hline
\begin{pmatrix}2y&y\\y&2y\end{pmatrix}&
\begin{pmatrix}1&0\\0&1\end{pmatrix},
\begin{pmatrix}0&1\\1&0\end{pmatrix},
\begin{pmatrix}1&1\\0&-1\end{pmatrix},
\begin{pmatrix}0&1\\-1&1\end{pmatrix},
\begin{pmatrix}1&0\\-1&-1\end{pmatrix},
\begin{pmatrix}1&1\\-1&0\end{pmatrix}
&6 &12 &2 &\frac{-1+i\sqrt3}2\\
\hline
\begin{pmatrix}2y&-y\\-y&2y\end{pmatrix}&
\begin{pmatrix}1&0\\0&1\end{pmatrix},
\begin{pmatrix}0&1\\1&0\end{pmatrix},
\begin{pmatrix}1&-1\\0&-1\end{pmatrix},
\begin{pmatrix}0&1\\-1&-1\end{pmatrix},
\begin{pmatrix}1&0\\1&-1\end{pmatrix},
\begin{pmatrix}1&-1\\1&0\end{pmatrix}
&6 &12 &2 &\frac{1+i\sqrt3}2\\
\hline
\text{Other}&
\begin{pmatrix}1&0\\0&1\end{pmatrix}
&2 &2 &1 &\frac{-y+i\sqrt{xz-y^2}}x\\
\hline
\end{array}
$$

\end{document}